\DeclareMathAlphabet{\mathpzc}{OT1}{pzc}{m}{it}
\def\cC{\mathscr{C}}
\def\cT{\mathscr{T}}
\def\cX{\mathscr{X}}
\def\cZ{\mathscr{Z}}
\def\add{\operatorname{add}}
\def\adots{\mathinner{\mkern1mu\raise1.0pt\vbox{\kern7.0pt\hbox{.}}\mkern2mu\raise4.0pt\hbox{.}\mkern2mu\raise7.0pt\hbox{.}\mkern1mu}}
\def\ast{{\textstyle *}}
\def\dddots{\mathinner{\mkern1mu\raise10.0pt\vbox{\kern7.0pt\hbox{.}}\mkern2mu\raise5.3pt\hbox{.}\mkern2mu\raise1.0pt\hbox{.}\mkern1mu}}
\def\dddotssmall{\mathinner{\mkern1mu\raise7.0pt\vbox{\kern7.0pt\hbox{.}}\mkern-1mu\raise4pt\hbox{.}\mkern-1mu\raise1.0pt\hbox{.}\mkern1mu}}
\def\Ext{\operatorname{Ext}}
\def\Hom{\operatorname{Hom}}
\def\inf{\operatorname{inf}}
\def\PSL2{\operatorname{PSL}_2}
\def\SL2{\operatorname{SL}_2}
\def\sup{\operatorname{sup}}
\newcommand{\ZZ}{\mathbb{Z}}
\newcommand{\X}{\mathscr{X}}
\newcommand{\T}{\mathscr{T}}
\newcommand{\CC}{\mathscr{C}}
\newcommand{\Z}{\mathscr{Z}}
\newcommand{\nc}{\operatorname{nc}\nolimits}
\numberwithin{equation}{section}
\newtheorem{Lemma}{Lemma}[section]
\newtheorem{Theorem}[Lemma]{Theorem}
\newtheorem{Proposition}[Lemma]{Proposition}
\theoremstyle{definition}
\newtheorem{Definition}[Lemma]{Definition}
\newtheorem{Setup}[Lemma]{Setup}
\newtheorem{Remark}[Lemma]{Remark}
\newtheorem*{bfhpg*}{}
\newenvironment{VarDescription}[1]%
  {\begin{list}{}{%
    \settowidth{\labelwidth}{\textbf{#1:}}%
    \setlength{\leftmargin}{\labelwidth}\addtolength{\leftmargin}{\labelsep}}}%
  {\end{list}}
\begin{document}

\setlength{\parindent}{0pt}
\setlength{\parskip}{7pt}

\title[Cluster tilting subcategories of Igusa--Todorov cluster categories]{Cluster tilting subcategories and torsion pairs in Igusa--Todorov cluster categories of Dynkin type $A_{ \infty }$} 

\author{Sira Gratz}
\address{Gratz: School of Mathematics and Statistics, University of Glasgow, University Place, Glasgow G12 8SQ, United Kingdom}
\email{Sira.Gratz@glasgow.ac.uk}
\urladdr{https://www.gla.ac.uk/schools/mathematicsstatistics/staff/siragratz}

\author{Thorsten Holm}
\address{Holm: Institut f\"{u}r Algebra, Zahlentheorie und Diskrete
  Mathematik, Fa\-kul\-t\"{a}t f\"{u}r Ma\-the\-ma\-tik und Physik, Leibniz
  Universit\"{a}t Hannover, Welfengarten 1, 30167 Hannover, Germany}
\email{holm@math.uni-hannover.de}
\urladdr{http://www.iazd.uni-hannover.de/\~{ }tholm}

\author{Peter J\o rgensen}
\address{J\o rgensen: School of Mathematics and Statistics,
Newcastle University, Newcastle upon Tyne NE1 7RU, United Kingdom}
\email{peter.jorgensen@ncl.ac.uk}
\urladdr{http://www.staff.ncl.ac.uk/peter.jorgensen}

\keywords{Cyclically ordered set, fountain, infinite polygon, leapfrog, Ptolemy condition, Ptolemy diagram, triangulation}

\subjclass[2010]{13F60, 18E30}


\begin{abstract}
We give a combinatorial classification of cluster tilting subcategories and torsion pairs in Igusa--Todorov cluster categories of Dynkin type $A_{ \infty }$.
\end{abstract}

\maketitle

\setcounter{section}{-1}
\section{Introduction}
\label{sec:introduction}

Let $\cC( A_n )$ be the cluster category of Dynkin type $A_n$, see \cite[sec.\ 1]{BMRRT} and \cite{CCS}.  It is well known that $\cC( A_n )$ has a combinatorial model by an $( n+3 )$-gon $P$.  The indecomposable objects are in bijection with the diagonals of $P$, and non-vanishing $\Ext^1$ groups correspond to crossing diagonals.  

The combinatorial model has two key properties: Cluster tilting subcategories of $\cC( A_n )$ correspond to triangulations of $P$, and torsion pairs in $\cC( A_n )$ correspond to so-called Ptolemy diagrams in $P$.  The former result is well known and appears to be folklore; the latter is proved in \cite[thm.\ A]{HJR:torsionpairsA}.

The aim of this paper is to prove similar key properties for the cluster categories $\cC( \cZ )$ of Dynkin type $A_{ \infty }$, which were introduced by Igusa and Todorov.  Subsection \ref{subsec:A} is a primer on $\cC( \cZ )$ and its combinatorial model by an $\infty$-gon, and Subsections \ref{subsec:B} and \ref{subsec:C} state the key properties we will prove.

Our results generalise the following parts of the literature:
\begin{itemize}
\setlength\itemsep{4pt}

  \item  When $\cZ$ has one, respectively two limit points (see Definition \ref{D:admissible}(iii)), \cite[thms.\ A,B,C]{HJinfinity}, respectively \cite[thms.\ 3.13, 5.7]{LP} classified cluster tilting subcategories in $\cC( \cZ )$, and showed that they form a cluster structure in the sense of \cite[sec.\ II.1]{BIRS}.

  \item  When $\cZ$ has one, respectively two limit points, \cite[thm.\ 3.18]{Ng}, respectively \cite[thm.\ 4.4]{CZZ} classified torsion pairs in $\cC( \cZ )$.

\end{itemize}
Furthermore, our Theorem \ref{thm:B} is closely related to \cite[thm.\ 7.17]{SvR}.

We would also like to mention that there are a number of papers on the classification of cluster tilting subcategories and torsion pairs in more general cluster categories, mainly based on combinatorial models of Riemann surfaces with marked points on the boundary, see \cite{BZ}, \cite{QZ}, \cite{ZZZ} for surface type and \cite{HJR:tubes} for cluster tubes.

\subsection{The Igusa--Todorov cluster categories $\cC( \cZ )$ of Dynkin type $A_{ \infty }$}
\label{subsec:A}

To explain the categories $\cC( \cZ )$ and their combinatorial models by $\infty$-gons, we first state two definitions.

\begin{Definition}
[Admissible subsets of $S^1$]
\label{D:admissible}
A subset $\cZ$ of the circle $S^1$ is called {\em admissible} if it satisfies the following conditions.
	\begin{enumerate}
	\setlength\itemsep{4pt}
		\item
		$\cZ$ has infinitely many elements.
		\item
		$\cZ \subset S^1$ is a discrete subset, i.e.\ for each $z \in \Z$ there is an open neighbourhood of $z$ in $S^1$, equipped with its usual topology, containing no other element of $\Z$.
		\item
		$\cZ$ satisfies the {\em two-sided limit condition}, i.e.\ each $x \in S^1$ which is the limit of a sequence from $\Z$ is a limit of both an increasing and a decreasing sequence from $\Z$ with respect to the 
		cyclic order.
	\end{enumerate}
Throughout the paper, $\cZ \subset S^1$ is a fixed admissible subset.  We think of $\cZ$ as the vertices of an $\infty$-gon, see Figure \ref{fig:admissible subset}.
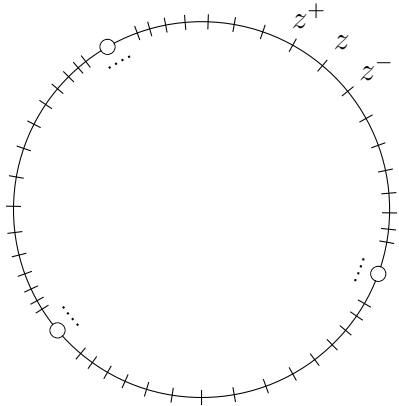
\begin{figure}[H]
\begin{center}
\begin{tikzpicture}[scale=5]
        \draw (0,0) circle(0.5cm);
	
	\node at (50:0.58){$z$};
	\node at (39:0.60){$z^-$};
	\node at (61:0.59){$z^+$};

    \draw  (128:0.48) -- (128:0.52);
    \draw  (131:0.48) -- (131:0.52);
    \draw  (135:0.48) -- (135:0.52);
    \draw  (140:0.48) -- (140:0.52);
    \draw  (146:0.48) -- (146:0.52);
    \draw  (153:0.48) -- (153:0.52);
    \draw  (161:0.48) -- (161:0.52);
	\draw  (170:0.48) -- (170:0.52);
    \draw  (179:0.48) -- (179:0.52);
    \draw  (187:0.48) -- (187:0.52);
    \draw  (194:0.48) -- (194:0.52);
    \draw  (200:0.48) -- (200:0.52);
    \draw  (205:0.48) -- (205:0.52);
    \draw  (209:0.48) -- (209:0.52);
    \draw  (212:0.48) -- (212:0.52);
    
    \draw  (230:0.48) -- (230:0.52);
    \draw  (234.5:0.48) -- (234.5:0.52);
    \draw  (240:0.48) -- (240:0.52);
    \draw  (246:0.48) -- (246:0.52);
    \draw  (253:0.48) -- (253:0.52);
    \draw  (261:0.48) -- (261:0.52);
    \draw  (270:0.48) -- (270:0.52);    
	\draw  (280:0.48) -- (280:0.52);
    \draw  (290:0.48) -- (290:0.52);
    \draw  (299:0.48) -- (299:0.52);
    \draw  (307:0.48) -- (307:0.52);
    \draw  (314:0.48) -- (314:0.52);
    \draw  (320:0.48) -- (320:0.52);
    \draw  (325.5:0.48) -- (325.5:0.52);
    \draw  (330:0.48) -- (330:0.52);

    \draw  (-10:0.48) -- (-10:0.52);
    \draw  (-6:0.48) -- (-6:0.52);
    \draw  (-1:0.48) -- (-1:0.52);
    \draw  (5:0.48) -- (5:0.52);
    \draw  (12:0.48) -- (12:0.52);
    \draw  (20:0.48) -- (20:0.52);
    \draw  (29:0.48) -- (29:0.52);
    \draw  (39:0.48) -- (39:0.52);
	\draw  (50:0.48) -- (50:0.52);
    \draw  (61:0.48) -- (61:0.52);
    \draw  (71:0.48) -- (71:0.52);
    \draw  (80:0.48) -- (80:0.52);
    \draw  (88:0.48) -- (88:0.52);
    \draw  (95:0.48) -- (95:0.52);
    \draw  (101:0.48) -- (101:0.52);
    \draw  (106:0.48) -- (106:0.52);
    \draw  (110:0.48) -- (110:0.52);

	\draw (120:0.5) node[fill=white,circle,inner sep=0.065cm] {} circle (0.02cm);
	\draw (220:0.5) node[fill=white,circle,inner sep=0.065cm] {} circle (0.02cm);
	\draw (340:0.5) node[fill=white,circle,inner sep=0.065cm] {} circle (0.02cm);
	
	\draw[thick,dotted] (115:0.45) arc (115:125:0.45);
	\draw[thick,dotted] (215:0.45) arc (215:225:0.45);
	\draw[thick,dotted] (335:0.45) arc (335:345:0.45);		
	
  \end{tikzpicture}
  \end{center}
  \caption{An example of an admissible subset $\cZ$ of $S^1$, to be thought of as the vertices of an $\infty$-gon, see Definition \ref{D:admissible}.  The points in $\cZ$ converge to the limit points marked with small circles.  Each point $z \in \cZ$ has a predecessor $z^-$ and a successor $z^+$ in $\cZ$, see Remark \ref{rmk:predecessors}. Note that the limit points are not elements of $\Z$ since $\Z$ is discrete.
\label{fig:admissible subset}}
\end{figure}
\end{Definition}
\medskip

\begin{Definition}
[Diagonals]
\label{def:diagonals}
A {\em diagonal of $\cZ$} is a subset $X = \{ x_0,x_1 \} \subset \cZ$ where $x_1 \not\in \{ x_0^-,x_0,x_0^+ \}$.  If $Y = \{ y_0,y_1 \}$ is another diagonal, then $X$ and $Y$ {\em cross} if $x_0 < y_0 < x_1 < y_1$ or $x_0 < y_1 < x_1 < y_0$.  See Definition \ref{def:cyclic} for an explanation of inequalities.

If $D^1$ is the disk bounded by $S^1$, then we think of the diagonal $X$ as an isotopy class of non-selfintersecting curves in $D^1$ between the non-neighbouring vertices $x_0$ and $x_1$, see Figure \ref{fig:cluster tilting subcategory}.  Two diagonals cross if their representing curves intersect in the interior of $D^1$.
\end{Definition}
\medskip

Starting from $\cZ$ and an algebraically closed field $k$, Igusa and Todorov in \cite[sec.\ 2.4]{IT:cyclicposets} constructed a cluster category $\cC( \cZ )$ of Dynkin type $A_{ \infty }$, which has a similar combinatorial model to that of $\cC( A_n )$.  To wit, $\cC( \cZ )$ is a $k$-linear Hom-finite Krull--Schmidt $2$-Calabi--Yau triangulated category; the indecomposable objects are in bijection with the diagonals of $\cZ$, and non-vanishing $\Ext^1$ groups correspond to crossing diagonals.  Further properties of $\cC( \cZ )$ are given in Section \ref{sec:IT}.

\subsection{Cluster tilting subcategories of the cluster categories $\cC( \cZ )$}
\label{subsec:B}

Our first main result is a classification of the cluster tilting subcategories of $\cC( \cZ )$ (see Definition \ref{def:cluster_tilting}).  Cluster tilting subcategories of $\cC( A_n )$  correspond to triangulations of a finite polygon $P$, that is, maximal sets of pairwise non-crossing diagonals of $P$.  By analogy, we expect cluster tilting subcategories of $\cC( \cZ )$ to correspond to triangulations of the $\infty$-gon with vertex set $\cZ$.

This is, in a sense, true, but there is more to say: The definition of admissible subset permits $\cZ$ to have a complicated configuration of limit points, and it is crucial how the endpoints of diagonals converge to the limit points.  Hence the following two definitions.

\begin{Definition}
[The proper limit points of $\cZ$]
We denote by $\overline{\Z}$ the topological closure of $\Z$ in $S^1$, and by 
\[
  L(\Z) = \overline{\Z} \setminus \Z
\]  
the set of proper limit points of $\cZ$.  It is disjoint from $\cZ$ because $\cZ$ is discrete.
\end{Definition}
\medskip

\begin{Definition}
[Leapfrogs and fountains]
\label{def:leapfrog_and_fountain}
Let $\X$ be a set of diagonals of $\Z$.  The following notions are illustrated by Figure \ref{fig:cluster tilting subcategory}.
\begin{figure}
\begin{center}
\begin{tikzpicture}[scale=5,cap=round,>=latex]

		
	\draw[black] (70:0.5) .. controls (70:0.3) and (120:0.3) .. (120:0.5);
	\draw[black] (70:0.5) .. controls (70:0.3) and (115:0.3) .. (115:0.5);
	\draw[black] (70:0.5) .. controls (70:0.3) and (110:0.3) .. (110:0.5);
	\draw[black] (70:0.5) .. controls (70:0.3) and (105:0.3) .. (105:0.5);

	\draw[black] (70:0.5) .. controls (70:0.2) and (230:0.2) .. (230:0.5);
	\draw[black] (70:0.5) .. controls (70:0.2) and (235:0.2) .. (235:0.5);
	\draw[black] (70:0.5) .. controls (70:0.2) and (240:0.2) .. (240:0.5);
	\draw[black] (70:0.5) .. controls (70:0.2) and (245:0.2) .. (245:0.5);

	\draw[thick,dotted] (215:0.4) arc (215:225:0.4);
	
	\draw[black] (70:0.5) .. controls (70:0.2) and (140:0.2) .. (140:0.5);
	\draw[black] (70:0.5) .. controls (70:0.2) and (145:0.2) .. (145:0.5);
	\draw[black] (70:0.5) .. controls (70:0.2) and (150:0.2) .. (150:0.5);
	\draw[black] (70:0.5) .. controls (70:0.2) and (155:0.2) .. (155:0.5);
	\draw[black] (70:0.5) .. controls (70:0.2) and (210:0.2) .. (210:0.5);
	\draw[black] (70:0.5) .. controls (70:0.2) and (205:0.2) .. (205:0.5);
	\draw[black] (70:0.5) .. controls (70:0.2) and (200:0.2) .. (200:0.5);
	\draw[black] (70:0.5) .. controls (70:0.2) and (195:0.2) .. (195:0.5);

	\draw[thick,dotted] (125:0.4) arc (125:135:0.4);
		
	\draw[black] (270:0.5) .. controls (270:0.2) and (45:0.2) .. (45:0.5);
	\draw[black] (270:0.5) .. controls (270:0.2) and (50:0.2) .. (50:0.5);
	\draw[black] (270:0.5) .. controls (270:0.2) and (55:0.2) .. (55:0.5);
	\draw[black] (270:0.5) .. controls (270:0.2) and (60:0.2) .. (60:0.5);

	\draw[thick,dotted] (30:0.4) arc (30:40:0.4);


	\draw[black] (270:0.5) .. controls (270:0.2) and (25:0.2) .. (25:0.5);
	\draw[black] (270:0.5) .. controls (270:0.2) and (20:0.2) .. (20:0.5);
	\draw[black] (270:0.5) .. controls (270:0.2) and (15:0.2) .. (15:0.5);
	\draw[black] (270:0.5) .. controls (270:0.2) and (10:0.2) .. (10:0.5);
	
	\draw[black] (10:0.5) .. controls (0:0.25) and (300:0.25) .. (290:0.5);
	\draw[black] (290:0.5) .. controls (300:0.25) and (-10:0.25) .. (0:0.5);
	\draw[black] (0:0.5) .. controls (-10:0.26) and (310:0.26) .. (300:0.5);
	\draw[black] (300:0.5) .. controls (310:0.28) and (-20:0.28) .. (-10:0.5);
	\draw[black] (-10:0.5) .. controls (-20:0.3) and (320:0.3) .. (310:0.5);
	
	\draw[thick,dotted] (330:0.45) -- (330:0.35);

        \draw[black] (0,0) circle(0.5cm);
        
        \node at (220:0.65) {$a_1$};
	\draw (220:0.5) node[fill=white,circle,inner sep=0.065cm] {} circle (0.02cm);	        
        \node at (130:0.65) {$a_2$};
	\draw (130:0.5) node[fill=white,circle,inner sep=0.065cm] {} circle (0.02cm);
	\node at (35:0.65) {$a_3$};
	\draw (35:0.5) node[fill=white,circle,inner sep=0.065cm] {} circle (0.02cm);	
	\node at (330:0.65) {$a_4$};
	\draw (330:0.5) node[fill=white,circle,inner sep=0.065cm] {} circle (0.02cm);	
  \end{tikzpicture}
\caption{A set of diagonals of $\Z$ with fountains converging to the limit points $a_1, a_2$, $a_3$ and a leapfrog converging to the limit point $a_4$, see Definition \ref{def:leapfrog_and_fountain}.  Such convergence must occur in each cluster tilting subcategory of $\CC(\Z)$ by Theorem \ref{thm:B}.}
\label{fig:cluster tilting subcategory}
\end{center}
\end{figure}
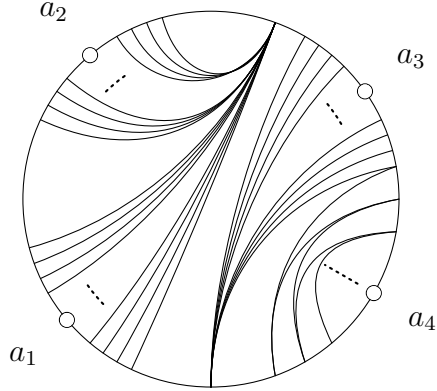
\begin{itemize}
\setlength\itemsep{4pt}

  \item  Given $a \in L( \cZ )$, we say that $\X$ {\em has a leapfrog converging to $a\in L(\Z)$} if there is a sequence $\{x_i,y_i\}_{i \in \ZZ_{\geqslant 0}}$ of diagonals from $\X$ with $x_i \to a$ from below and $y_i \to a$ from above.  (Convergence from below and above is explained in Definition \ref{def:convergence_from_below_and_above}.)

  \item  Given $a \in L(\Z)$, $z \in \Z$.  We say that $\X$ {\em has a right fountain at $z$ converging to $a$} if there is a sequence $\{z, x_i\}_{i \in \ZZ_{\geqslant 0}}$ from $\X$ with $x_i \to a$ from below. We say that $\X$ {\em has a left fountain at $z$ converging to $a$} if there is a sequence $\{z, y_i\}_{i \in \ZZ_{\geqslant 0}}$ from $\X$ with $y_i \to a$ from above.
\medskip
	
\noindent
We say that $\X$ {\em has a fountain at $z$ converging to $a$} if it has a right fountain and a left fountain at $z$ converging to $a$.

\end{itemize}
\end{Definition}
\medskip

Here is our first main result.  It is closely related to \cite[thm.\ 7.17]{SvR}.  Given a set $\cX$ of diagonals of $\cZ$, we write $E(\X)$ for the corresponding set of indecomposable objects of $\CC(\Z)$.

\begin{Theorem}
[=Theorem \ref{T:cluster tilting}]
\label{thm:B}
Let $\X$ be a set of diagonals of $\Z$. Then $\add E(\X)$ is a cluster tilting sub\-ca\-te\-go\-ry if and only if $\X$ is a maximal set of pairwise non-crossing diagonals, such that for each $a \in L(\Z)$, the set $\X$ has a fountain or a leapfrog converging to $a$.
\end{Theorem}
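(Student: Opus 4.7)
The plan is to separate the equivalence into the two defining features of a cluster tilting subcategory in the Hom-finite $2$-Calabi--Yau setting of $\CC(\Z)$: the Ext-rigidity and closure condition
\[
\add E(\X) = \{\, X \in \CC(\Z) \mid \Ext^1(X, \add E(\X)) = 0 \,\},
\]
and functorial finiteness of $\add E(\X)$. By the combinatorial dictionary recalled in Subsection \ref{subsec:A} and refined in Section \ref{sec:IT}, non-vanishing $\Ext^1$ groups in $\CC(\Z)$ correspond to crossing diagonals, so the first condition translates at once into the purely combinatorial statement that $\X$ is a maximal set of pairwise non-crossing diagonals of $\Z$. This disposes of the Ext-theoretic content of the theorem and reduces everything to the assertion that $\add E(\X)$ is functorially finite in $\CC(\Z)$ if and only if each $a \in L(\Z)$ is approached by a fountain or a leapfrog from $\X$.

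For the sufficiency direction, assume every $a \in L(\Z)$ is reached by a fountain or leapfrog in $\X$. To construct a right $\add E(\X)$-approximation of an arbitrary indecomposable $E(Y)$ with $Y = \{y_0, y_1\}$, I would first identify which $X \in \X$ admit a nonzero map $E(X) \to E(Y)$; by the description of Hom-spaces in Section \ref{sec:IT}, these are determined by a combinatorial incidence condition between $X$ and $Y$. The hypothesis on fountains and leapfrogs ensures that any accumulation of such $X$ near a limit point $a$ is controlled by finitely many extremal diagonals of the fountain or leapfrog at $a$, from which one assembles a finite direct sum $T \in \add E(\X)$ whose canonical map $T \to E(Y)$ is a right approximation. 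Left approximations then follow by the analogous construction, or by invoking $2$-Calabi--Yau duality together with Hom-finiteness.

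The main obstacle is the necessity direction. Assume some $a \in L(\Z)$ admits neither a fountain nor a leapfrog in $\X$. I would produce an indecomposable $E(Y)$ of $\CC(\Z)$ having no right or no left $\add E(\X)$-approximation by placing the endpoints of $Y$ so that infinitely many pairwise incomparable diagonals $X \in \X$ admit nonzero maps $E(X) \to E(Y)$ which cannot all factor through any single object of $\add E(\X)$. The careful part is the case analysis: the failure of a fountain at $a$ and the failure of a leapfrog at $a$ each split into sub-cases according to which one-sided substructures (left fountain, right fountain, one-sided accumulation) do occur in $\X$, and for each configuration one must exhibit the appropriate obstructing diagonal $Y$. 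The special cases of one and two limit points are treated in \cite{HJinfinity} and \cite{LP}, and the work will be to adapt and extend those arguments to an arbitrary admissible $\Z$. Combining this combinatorial characterisation of functorial finiteness with the Ext-rigidity/maximality reduction from the first paragraph then yields both directions of the theorem.
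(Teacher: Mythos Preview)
Your overall decomposition is sound and matches the paper's: cluster tilting splits into weak cluster tilting (which is exactly the maximal non-crossing condition via the $\Ext^1$--crossing dictionary) plus an approximation-theoretic condition. The paper, however, organises the approximation part quite differently from your sketch, and the difference is worth noting.

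Rather than building precovers directly from fountains and leapfrogs, the paper first proves a general characterisation (Theorem \ref{T:precovering}): for \emph{any} set of diagonals $\X$, the subcategory $\add E(\X)$ is precovering if and only if $\X$ satisfies the two convergence conditions PC1 and PC2 of Definition \ref{def:PC}. This is the technical heart, and its proof (Section \ref{sec:precovering}) involves an inductive supremum construction to produce the finitely many approximating diagonals. With that in hand, the cluster tilting theorem reduces to the purely combinatorial equivalence ``maximal non-crossing $+$ PC1/PC2 $\iff$ maximal non-crossing $+$ fountain/leapfrog at every limit point'', handled by the short Propositions \ref{prop:PC2_implies_fountainleapfrog} and \ref{P:noncrossing with isolated limit points implies precovering}. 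The paper also invokes \cite[Lemma~3.2(3)]{KZ:clustertilting} to drop preenveloping entirely, so your appeal to $2$-Calabi--Yau duality for left approximations is unnecessary (though not wrong).

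Your direct route---building the precover by hand from ``extremal diagonals of the fountain or leapfrog'', and for necessity exhibiting an explicit obstructing $E(Y)$---is plausible, but the sketch is thin exactly where the work lies. In the sufficiency direction, ``controlled by finitely many extremal diagonals'' hides a genuine finiteness argument; the paper's proof of Theorem \ref{T:precovering} shows this is not a one-liner even with the PC conditions. In the necessity direction, producing the obstruction requires first understanding what a maximal non-crossing $\X$ can look like near a limit point $a$ with no fountain and no leapfrog; the paper sidesteps this by instead \emph{constructing} a leapfrog inductively from maximality $+$ PC2 (Proposition \ref{prop:PC2_implies_fountainleapfrog}). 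Your case analysis, as stated, would have to reprove much of this structure. The paper's factoring through PC1/PC2 buys modularity (the same machinery drives the torsion-pair classification in Theorem \ref{thm:A}) and cleanly separates the analytic content (precovering $\Leftrightarrow$ PC) from the combinatorial content (PC $\Leftrightarrow$ fountain/leapfrog, under non-crossing hypotheses).
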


One of the salient features of cluster tilting subcategories are their nice combinatorial properties encoded in the notion of {\em cluster structure}.  We thank Adam-Christiaan van Roosmalen for pointing out that the following result  follows from \cite[thm.\ 5.6]{SvR}.  We will give a direct proof.

\begin{Theorem}
[=Theorem \ref{thm:cluster_structure}]
\label{thm:C}
The cluster tilting subcategories of $\cC( \cZ )$ form a cluster structure in the sense of \cite[sec.\ II.1]{BIRS}.
\end{Theorem}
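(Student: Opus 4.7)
The plan is to verify the three defining conditions of a cluster structure from \cite[sec.\ II.1]{BIRS}: for each cluster tilting subcategory $\cT$ of $\cC(\cZ)$ and each indecomposable $M \in \cT$, (C1) there is a unique indecomposable $M^* \not\cong M$ such that $\cT' := (\cT \setminus \add M) \vee \add M^*$ is again cluster tilting, (C2) there exist exchange triangles $M^* \to B \to M \to$ and $M \to B' \to M^* \to$ in $\cC(\cZ)$ with middle terms $B, B'$ lying in $\cT \cap \cT'$, and (C3) the Gabriel quiver of $\End\cT$ has no loops or $2$-cycles, with its mutation at the vertex $M$ equal to the Gabriel quiver of $\End\cT'$.

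By Theorem \ref{thm:B}, I would write $\cT = \add E(\X)$ for a maximal non-crossing set $\X$ of diagonals of $\cZ$ satisfying the fountain/leapfrog condition at every $a \in L(\cZ)$, and $M = E(X)$ for some $X = \{x_0,x_1\} \in \X$. The diagonal $X$ cuts the closed disk bounded by $S^1$ into two closed pieces, and the restriction of $\X$ to each piece is again maximal non-crossing with the correct convergence behaviour at the limit points it contains. I then classify the local shape of $\X$ at each endpoint $x_j$ of $X$ into finitely many cases, according to whether $x_j$ is the base of a right and/or left fountain in $\X$, is an endpoint of a leapfrog of $\X$ converging to some $a \in L(\cZ)$, or is the apex of finitely many triangles of $\X$ on that side. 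This is the same catalogue used in \cite{HJinfinity} and \cite{LP}, now localised at an arbitrary limit point of $\cZ$. In each case, there is exactly one diagonal $X^*$ which, when substituted for $X$, yields a maximal non-crossing set still satisfying the fountain/leapfrog condition: either $X^*$ joins the third vertices of the two triangles flanking $X$, or it is forced to have an endpoint at a limit point supplied by an adjacent fountain or leapfrog. This establishes (C1).

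For (C2), the diagonals $X$ and $X^*$ together with the (at most four) arcs in $\X \cap \X'$ completing the configuration form a Ptolemy quadrilateral, and the two triangles obtained by splitting this quadrilateral along $X$ and along $X^*$ respectively provide the exchange triangles in $\cC(\cZ)$; their middle terms are direct sums of the completing arcs, which lie in $\cT \cap \cT'$ by construction, and the existence of these distinguished triangles follows from the combinatorial description of $\Ext^1$ and irreducible morphisms in $\cC(\cZ)$ recalled in Section \ref{sec:IT}. For (C3), a loop at $E(X)$ would require $\Ext^1(E(X),E(X)) \neq 0$, i.e.\ $X$ crossing itself, which is impossible; a $2$-cycle $E(X) \rightrightarrows E(Y)$ would force $X$ and $Y$ to cross each other, contradicting $X, Y \in \X$. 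Compatibility of quiver mutation with the combinatorial flip $\X \leadsto \X'$ is then a purely local check on the Ptolemy quadrilateral, identical to the classical $A_n$ case.

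The hard part will be the case analysis in (C1) when an endpoint $x_j$ of $X$ is attached to an infinite fountain or to arcs of a leapfrog: there, ``the next arc after $X$'' is not a combinatorial neighbour but arises as the limit of an infinite family, and one must show both that this limit produces a well-defined and unique flip $X^*$ and that replacing $X$ by $X^*$ preserves the delicate fountain/leapfrog data at \emph{every} limit point of $\cZ$. The two-limit-point treatment in \cite{LP} already requires substantial bookkeeping; extending it to the admissible subsets of Definition \ref{D:admissible}, where $L(\cZ)$ may have an arbitrary accumulation pattern, is the principal combinatorial challenge.
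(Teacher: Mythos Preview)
Your plan attacks the full list of cluster-structure axioms directly, but the paper bypasses almost all of this. It invokes \cite[thm.\ II.1.6]{BIRS}, which says that in a $2$-Calabi--Yau Hom-finite Krull--Schmidt triangulated category, the cluster tilting subcategories form a cluster structure as soon as (a) at least one cluster tilting subcategory exists, and (b) the quiver of each cluster tilting subcategory has no loops or $2$-cycles. Condition (a) comes for free from Theorem \ref{thm:B}, so the entire proof reduces to ruling out loops and $2$-cycles. Your laborious case analysis for (C1) and the exchange-triangle construction for (C2), including the ``hard part'' you flag at the end, are thus unnecessary: the abstract machinery of \cite{BIRS} absorbs them.

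Your treatment of (C3) also contains a genuine error. A loop at $E(X)$ in the quiver of $\cT$ is an irreducible \emph{endomorphism}, i.e.\ a nonzero element of $\rad\End(E(X))/\rad^2\End(E(X))$; it has nothing to do with $\Ext^1(E(X),E(X))$. The correct argument is simply that $\End_{\cC(\cZ)}(E(X)) \cong k$ by Section \ref{sec:IT}(vi), so the radical vanishes. Likewise, a $2$-cycle between $E(X)$ and $E(Y)$ means there are irreducible morphisms in \emph{both} Hom-directions; this does not force $X$ and $Y$ to cross (which would be an $\Ext^1$ statement). The paper instead uses the explicit description in Section \ref{sec:IT}(vi): if $\Hom(E(X),E(Y)) \neq 0$ with $E(X),E(Y) \in \cT$ non-isomorphic, then since $X,Y$ do not cross they must share an endpoint, and one checks directly from the inequalities that $\Hom(E(Y),E(X)) = 0$. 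Finally, your remark that $X^*$ may be ``forced to have an endpoint at a limit point'' cannot be right as stated, since diagonals have both endpoints in $\cZ$ while $L(\cZ) \cap \cZ = \varnothing$.
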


To get this from \cite[thm.\ 5.6]{SvR} requires the existence of a so-called directed cluster tilting subcategory of $\cC( \cZ )$, which can be obtained from Theorem \ref{thm:B} by picking a vertex $z \in \cZ$ and letting $\cX$ be the set of all diagonals from $z$ to non-neighbouring vertices.

\subsection{Torsion pairs in the cluster categories $\cC( \cZ )$}
\label{subsec:C}

Our second main result is a classification of the torsion pairs in $\cC( \cZ )$ (see Definition \ref{def:torsion_pairs}).  Recall that torsion pairs in $\cC( A_n )$ correspond to so-called Ptolemy diagrams in a finite polygon $P$,
see \cite[thm.\ A]{HJR:torsionpairsA}.  Again there is an analogue for $\cC( \cZ )$, and again, convergence plays a crucial role.  Hence the following definition.

\begin{Definition}
[Conditions PC1 and PC2]
\label{def:PC}
We can impose the following conditions on a set $\X$ of diagonals of $\Z$, see Figure \ref{fig:PC}.
	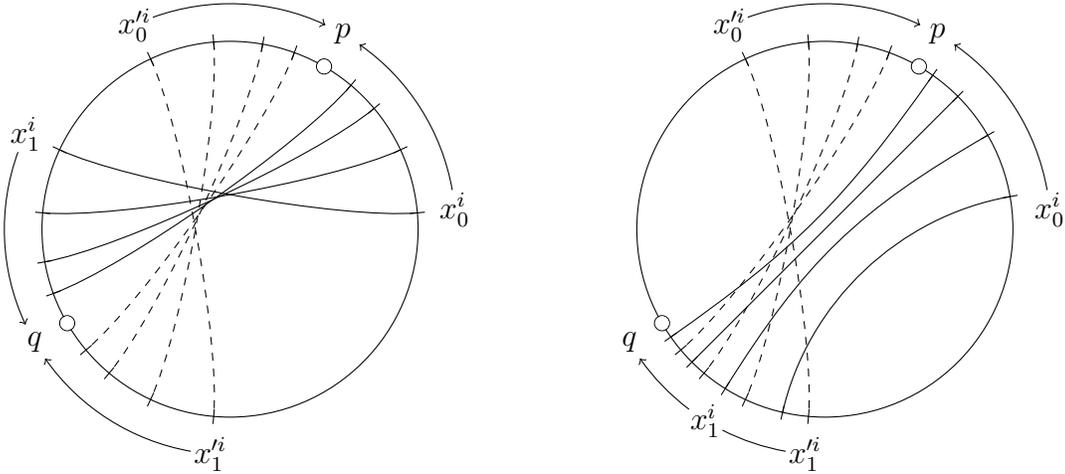
\begin{figure}
\begin{center}
\begin{tikzpicture}[scale=5]

    \draw (0,0) circle(0.5cm);
	
	\node at (5:0.60){$x_0^i$};
	\node at (60:0.60){$p$};
	\node at (115:0.60){$x'^i_0$};

	\node at (155:0.60){$x_1^i$};
	\node at (210:0.60){$q$};
	\node at (265:0.60){$x'^i_1$};

	\draw[->] (10:0.60) arc (10:55:0.60);
	\draw[<-] (65:0.60) arc (65:110:0.60);

	\draw[->] (160:0.60) arc (160:205:0.60);
	\draw[<-] (215:0.60) arc (215:260:0.60);

	\draw (210:0.5) node[fill=white,circle,inner sep=0.065cm] {} circle (0.02cm);
	\draw (60:0.5) node[fill=white,circle,inner sep=0.065cm] {} circle (0.02cm);

    \draw (5:0.48) -- (5:0.52);
    \draw (155:0.48) -- (155:0.52);    
    \draw (5:0.5) .. controls (5:0.3) and (155:0.3) .. (155:0.5);	    

    \draw (25:0.48) -- (25:0.52);
    \draw (175:0.48) -- (175:0.52);    
    \draw (25:0.5) .. controls (25:0.3) and (175:0.3) .. (175:0.5);	    

    \draw (40:0.48) -- (40:0.52);
    \draw (190:0.48) -- (190:0.52);    
    \draw (40:0.5) .. controls (40:0.3) and (190:0.3) .. (190:0.5);	    

    \draw (50:0.48) -- (50:0.52);
    \draw (200:0.48) -- (200:0.52);    
    \draw (50:0.5) .. controls (50:0.3) and (200:0.3) .. (200:0.5);

    \draw (115:0.48) -- (115:0.52);
    \draw (265:0.48) -- (265:0.52);    
    \draw[dashed] (115:0.5) .. controls (115:0.3) and (265:0.3) .. (265:0.5);
    
    \draw (95:0.48) -- (95:0.52);
    \draw (245:0.48) -- (245:0.52);    
    \draw[dashed] (95:0.5) .. controls (95:0.3) and (245:0.3) .. (245:0.5);	    

    \draw (80:0.48) -- (80:0.52);
    \draw (230:0.48) -- (230:0.52);    
    \draw[dashed] (80:0.5) .. controls (80:0.3) and (230:0.3) .. (230:0.5);	    
    
    \draw (70:0.48) -- (70:0.52);
    \draw (220:0.48) -- (220:0.52);    
    \draw[dashed] (70:0.5) .. controls (70:0.3) and (220:0.3) .. (220:0.5);

\begin{scope}[xshift = 45]

    \draw (0,0) circle(0.5cm);
	
	\node at (5:0.60){$x_0^i$};
	\node at (60:0.60){$p$};
	\node at (115:0.60){$x'^i_0$};

	\node at (238:0.60){$x_1^i$};
	\node at (210:0.60){$q$};
	\node at (265:0.60){$x'^i_1$};

	\draw[->] (10:0.60) arc (10:55:0.60);
	\draw[<-] (65:0.60) arc (65:110:0.60);

	\draw (260:0.60) arc (260:243:0.60);
	\draw[->] (233:0.60) arc (233:215:0.60);

	\draw (210:0.5) node[fill=white,circle,inner sep=0.065cm] {} circle (0.02cm);
	\draw (60:0.5) node[fill=white,circle,inner sep=0.065cm] {} circle (0.02cm);

    \draw (10:0.48) -- (10:0.52);
    \draw (257:0.48) -- (257:0.52);    
    \draw (10:0.5) .. controls (10:0.2) and (257:0.2) .. (257:0.5);	    

    \draw (30:0.48) -- (30:0.52);
    \draw (238:0.48) -- (238:0.52);    
    \draw (30:0.5) .. controls (30:0.1) and (238:0.1) .. (238:0.5);	    

    \draw (45:0.48) -- (45:0.52);
    \draw (225:0.48) -- (225:0.52);    
    \draw (45:0.5) .. controls (45:0.1) and (225:0.1) .. (225:0.5);	    

    \draw (55:0.48) -- (55:0.52);
    \draw (215:0.48) -- (215:0.52);    
    \draw (55:0.5) .. controls (55:0.1) and (215:0.1) .. (215:0.5);

    \draw (115:0.48) -- (115:0.52);
    \draw (265:0.48) -- (265:0.52);    
    \draw[dashed] (115:0.5) .. controls (115:0.3) and (265:0.3) .. (265:0.5);
    
    \draw (95:0.48) -- (95:0.52);
    \draw (245:0.48) -- (245:0.52);    
    \draw[dashed] (95:0.5) .. controls (95:0.3) and (245:0.3) .. (245:0.5);	    

    \draw (80:0.48) -- (80:0.52);
    \draw (230:0.48) -- (230:0.52);    
    \draw[dashed] (80:0.5) .. controls (80:0.3) and (230:0.3) .. (230:0.5);	    
    
    \draw (70:0.48) -- (70:0.52);
    \draw (220:0.48) -- (220:0.52);    
    \draw[dashed] (70:0.5) .. controls (70:0.3) and (220:0.3) .. (220:0.5);	    

\end{scope}

  \end{tikzpicture}
\end{center}
\caption{Illustration of conditions PC1 and PC2 from Definition \ref{def:PC}.}
\label{fig:PC}
\end{figure}
The letters ``PC'' stands for ``precovering''.
\begin{VarDescription}{PC2:\quad}
\setlength\itemsep{4pt}

  \item[PC1:]  If there is a sequence $\{x^i_0, x^i_1\}_{i \in \ZZ_{\geqslant 0}}$ from $\X$ with $x^i_0 \to p$ from below and $x^i_1 \to q$ from below with $p \neq q$, then there is a sequence $\{x'^i_0, x'^i_1\}_{i \in \ZZ_{\geqslant 0}}$ from $\X$ with $x'^i_0 \to p$ from above and $x'^i_1 \to q$ from above.
  
  \item[PC2:]If there is a sequence $\{x^i_0, x^i_1\}_{i \in \ZZ_{\geqslant 0}}$ from $\X$ with $x^i_0 \to p$ from below and $x^i_1 \to q$ from above with $p \neq q$, then there is a sequence $\{x'^i_0, x'^i_1\}_{i \in \ZZ_{\geqslant 0}}$ from $\X$ with $x'^i_0 \to p$ from above and $x'^i_1 \to q$ from above.
  
\end{VarDescription}

\end{Definition}
\medskip

The following combinatorial notion was introduced in \cite[def.\ 0.3]{Ng}.

\begin{Definition}
[The Ptolemy condition]
\label{def:Ptolemy}
 Let $\X$ be a set of diagonals of $\Z$.
We say that {\em $\X$ satisfies the Ptolemy condition} if, whenever $\{x_0,x_1\} \in \X$ and $\{y_0,y_1\} \in \X$ cross, then those of $\{x_0,y_0\}$, $\{x_0,y_1\}$, $\{x_1,y_0\}$ and $\{x_1,y_1\}$ which are diagonals of $\Z$ (i.e.\ whose vertices are non-neighbouring) also lie in $\X$.  See Figure \ref{fig:Ptolemy}.
\begin{figure}
\begin{center}
\begin{tikzpicture}[scale=5,cap=round,>=latex]

        \draw[black] (0,0) circle(0.5cm);

  \draw (50:0.48) -- (50:0.52);
  \node at (50:0.6) {$x_0$};
  \draw (110:0.48) -- (110:0.52);
  \node at (110:0.6) {$y_0$};  
  \draw (210:0.48) -- (210:0.52);
  \node at (210:0.6) {$x_1$};  
  \draw (300:0.48) -- (300:0.52);
  \node at (300:0.6) {$y_1$};  

  \draw[black] (50:0.5) .. controls (50:0.2) and (210:0.2) .. (210:0.5);
  \draw[black] (110:0.5) .. controls (110:0.2) and (300:0.2) .. (300:0.5);

  \draw[dashed,black] (50:0.5) .. controls (50:0.25) and (110:0.25) .. (110:0.5);
  \draw[dashed,black] (50:0.5) .. controls (50:0.2) and (300:0.2) .. (300:0.5);
  \draw[dashed,black] (210:0.5) .. controls (210:0.2) and (110:0.2) .. (110:0.5);
  \draw[dashed,black] (210:0.5) .. controls (210:0.2) and (300:0.2) .. (300:0.5);

  \end{tikzpicture}
\caption{The Ptolemy condition from Definition \ref{def:Ptolemy}: If the crossing diagonals $\{ x_0,x_1 \}$ and $\{ y_0,y_1 \}$ are in $\cX$, then so are those of $\{ x_0,y_0 \}$, $\{ x_0,y_1 \}$, $\{ x_1,y_0 \}$, $\{ x_1,y_1 \}$ which are diagonals.}
\label{fig:Ptolemy}
\end{center}
\end{figure}
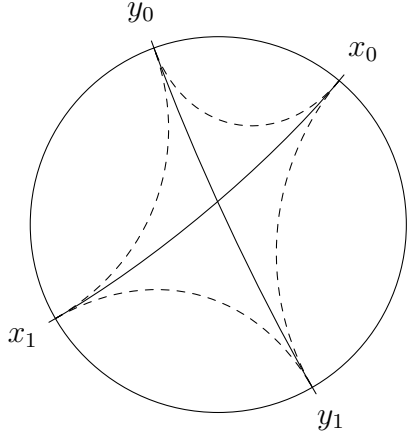
\end{Definition}
\medskip

Here is our second main result.

\begin{Theorem}
[=Theorem \ref{T:torsion pair}]
\label{thm:A}
Let $\X$ be a set of diagonals of $\cZ$. Then $\add E(\X)$ is the first half of a torsion pair in $\CC(\Z)$ if and only if $\X$ satisfies conditions PC1, PC2, and the Ptolemy condition.
\end{Theorem}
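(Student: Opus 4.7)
The strategy is to split the analysis along the role played by each hypothesis: the Ptolemy condition governs extension-closure of $\add E(\X)$, while conditions PC1 and PC2 govern its precovering (contravariantly finite) behaviour inside $\CC(\Z)$. The forward implication will translate the categorical conditions of a torsion class into combinatorial constraints on $\X$, and the backward implication will construct the necessary right approximations from those combinatorial constraints.

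For the forward direction, assume $(\add E(\X), \F)$ is a torsion pair. I extract the Ptolemy condition by the same exchange-triangle argument as in \cite[thm.\ A]{HJR:torsionpairsA}: given crossing $\{x_0,x_1\}, \{y_0,y_1\} \in \X$, the one-dimensional $\Ext^1$ between the corresponding indecomposables supports a non-split triangle whose middle term is the sum of $E(D)$ for those among $\{x_0,y_0\}, \{x_0,y_1\}, \{x_1,y_0\}, \{x_1,y_1\}$ which are actual diagonals, and extension-closure of $\add E(\X)$ forces each $D$ into $\X$. To extract PC1 I argue by contradiction: if a sequence $\{x_0^i, x_1^i\}$ in $\X$ converges from below to distinct $p, q \in L(\Z)$ with no matching from-above sequence in $\X$, I pick $M = E(\{m_0, m_1\})$ with $m_0$ just above $p$ and $m_1$ just above $q$, observe that each $E(\{x_0^i, x_1^i\}) \to M$ is nonzero, and show that the infinite family of these maps cannot be uniformly factored through any finite direct sum in $\add E(\X)$ unless a from-above sequence is present, contradicting the existence of a right $\add E(\X)$-approximation of $M$. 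PC2 follows by the same style of argument with the endpoints' convergence sides adjusted.

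For the backward direction, suppose $\X$ satisfies all three conditions. Extension-closure is immediate from the Ptolemy condition, since any non-trivial extension of two objects of $E(\X)$ arises from a crossing and its middle term decomposes into Ptolemy pieces, each in $\add E(\X)$ by assumption. The delicate part is precovering. Fix $M = E(\{m_0, m_1\})$ and consider the set $\X_M$ of diagonals in $\X$ admitting a nonzero map to $M$, a combinatorial ``Hom region'' around $\{m_0, m_1\}$. Using the Ptolemy condition I would consolidate these into a family of outermost diagonals; when infinitely many cluster near $m_0$ or $m_1$, PC1 and PC2 supply the limiting from-above diagonals that serve as genuine bounds in $\X$. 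The finite sum $T$ of these outermost representatives, together with the natural morphism $T \to M$, is then the desired right $\add E(\X)$-approximation. The main obstacle is exactly this construction: the intricate convergence behaviour near limit points of $\Z$ forbids a naive finite-subfamily choice, and one must carefully combine the Ptolemy condition (to smooth crossings) with PC1 and PC2 (to produce the requisite limit diagonals) to obtain a finite outermost envelope through which all maps from $\add E(\X)$ into $M$ factor. Once precovering is in hand, the complementary class $\F$ and the triangle condition follow formally from $2$-Calabi--Yau duality in $\CC(\Z)$.
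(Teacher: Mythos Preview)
Your overall decomposition --- precovering $\leftrightarrow$ PC1/PC2, and a closure condition $\leftrightarrow$ Ptolemy --- matches the paper's, and your forward direction is essentially correct (the paper extracts PC1/PC2 from precovering exactly as you sketch, and extracts Ptolemy via the equivalent condition $\nc^2\X=\X$ rather than via exchange triangles, but either route works).

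The backward direction, however, has a genuine gap. By \cite[Prop.~2.3]{IY:mutation}, $\add E(\X)$ is the first half of a torsion pair if and only if it is precovering \emph{and} satisfies $\add E(\X)={}^\perp\!\big((\add E(\X))^\perp\big)$. You establish precovering and extension-closure, and then assert that ``the complementary class $\F$ and the triangle condition follow formally from $2$-Calabi--Yau duality''. But extension-closure is strictly weaker than ${}^\perp(X^\perp)=X$: the latter always implies the former, not conversely, and $2$-CY duality does not bridge this gap. What you actually need is the combinatorial translation $\nc^2\X=\X$ (via the $\Ext^1$/crossing dictionary), and proving \emph{that} from Ptolemy is exactly where PC1 and PC2 must re-enter the argument. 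This is the content of the paper's Lemma~\ref{L:Ptolemy implies nc^2 = id}, which is the longest and most delicate step in Section~\ref{sec:torsion_pairs}: given $\{s,t\}\in\nc^2\X$, one must manufacture a diagonal of $\X$ with those endpoints, repeatedly using suprema/infima of endpoint sets together with PC1/PC2 to force those extrema into $\Z$ and then invoking Ptolemy. Your sketch does not address this step at all.

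A smaller divergence: the paper shows (Theorem~\ref{T:precovering}) that PC1 and PC2 \emph{alone} yield precovering, via an inductive construction of finitely many ``supremum diagonals'' $\{s_0^l,s_1^l\}$ through which every relevant map factors; the Ptolemy condition is not used there. Your plan to use Ptolemy in the precovering argument as well may also work, but it is not how the paper separates the two conditions.
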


Note that the first half of a torsion pair determines the second half, so our result does provide a complete classification.

\begin{Remark}
\label{rmk:PC}
The conjunction of PC1 and PC2 is equivalent to the following condition.
\begin{VarDescription}{PC:\quad}
\setlength\itemsep{4pt}

  \item[PC:]  If there is a sequence $\{x^i_0, x^i_1\}_{i \in \ZZ_{\geqslant 0}}$ from $\X$ with $x^i_0 \to p$ from below and $x^i_1 \to q$ with $p \neq q$, then there is a sequence $\{x'^i_0, x'^i_1\}_{i \in \ZZ_{\geqslant 0}}$ from $\X$ with $x'^i_0 \to p$ from above and $x'^i_1 \to q$ from above.
    
\end{VarDescription}
It is clear that PC implies PC1 and PC2.  To see the converse, note that the sequence $\{x^i_0, x^i_1\}_{i \in \ZZ_{\geqslant 0}}$ in PC will either have a subsequence with $x_1^i \rightarrow q$ from below, and then PC1 can be applied, or a subsequence with $x_1^i \rightarrow q$ from above, and then PC2 can be applied. 
\end{Remark}

The paper is organised as follows: Section \ref{sec:admissible_sets} shows some properties of admissible subsets of $S^1$.  Section \ref{sec:IT} recalls the cluster category $\cC( \cZ )$ from \cite[sec.\ 2.4]{IT:cyclicposets}.  Section \ref{sec:precovering} provides a main ingredient for the proof of Theorem \ref{thm:A} by showing that $\add E( \cX )$ is precovering if and only if $\cX$ satisfies conditions PC1 and PC2.  Section \ref{sec:torsion_pairs} proves Theorem \ref{thm:A}.  Section \ref{sec:cluster_tilting} proves Theorems \ref{thm:B} and \ref{thm:C}.

\section{Admissible subsets of the circle $S^1$}
\label{sec:admissible_sets}

\begin{Definition}
[Cyclically ordered subsets of $S^1$]
\label{def:cyclic}
The circle $S^1$, equipped with its usual topology and orientation, has a natural structure as a cyclically ordered set.

We choose anticlockwise as the positive direction, whence the inequalities
$x_0 < x_1 < \ldots < x_n$ mean that, when moving anticlockwise around the circle, after encountering $x_{i-1}$ for $i = 1, \ldots, n$, the next element of $\{x_0, \ldots, x_n\}$ encountered is precisely $x_i$.  See Figure \ref{fig:order}.  Soft inequalities are defined analogously.

The cyclic order permits to define closed or (half) open intervals of $S^1$; for instance, the closed interval $[ a,b ]$ is shown in Figure \ref{fig:order}.  Each interval has an induced linear order.

The cyclic order on $S^1$ induces a cyclic order on each subset of $S^1$, in particular on $\cZ$.  
\begin{figure}
\begin{center}
\begin{tikzpicture}[scale=5.0,cap=round,>=latex]

	\node at (60:0.58cm) {$x_0$};
	\draw (60:0.48) -- (60:0.52);
	\node at (80:0.58cm) {$x_1$};
	\draw (80:0.48) -- (80:0.52);
	\node at (103:0.58cm) {$x_2$};
	\draw (103:0.48) -- (103:0.52);
	\node at (185:0.58cm) {$x_3$};
	\draw (185:0.48) -- (185:0.52);
	\node at (262:0.58cm) {$x_4$};
	\draw (262:0.48) -- (262:0.52);
	\node at (32:0.58cm) {$x_5$};
	\draw (32:0.48) -- (32:0.52);
        \draw (0,0) circle(0.5cm);
  
     \begin{scope}[xshift = 40]   

	\node at (60:0.59cm) {$a$};
	\node at (180:0.59cm) {$b$};
        \draw (0,0) circle(0.5cm);
        \draw[ultra thick] (60:0.48) -- (60:0.52);
        \draw[ultra thick] (180:0.48) -- (180:0.52);
        \draw[ultra thick] (60:0.5) arc (60:180:0.5);

      \end{scope}
  \end{tikzpicture}
\end{center}
\caption{Illustration of Definition \ref{def:cyclic}.  The elements $x_0, \ldots, x_5$ of $S^1$ satisfy  $x_0 < x_1 < x_2 < x_3 < x_4 < x_5$, and the interval $[a,b]$ is marked by a thick arc.}
\label{fig:order}
\end{figure}
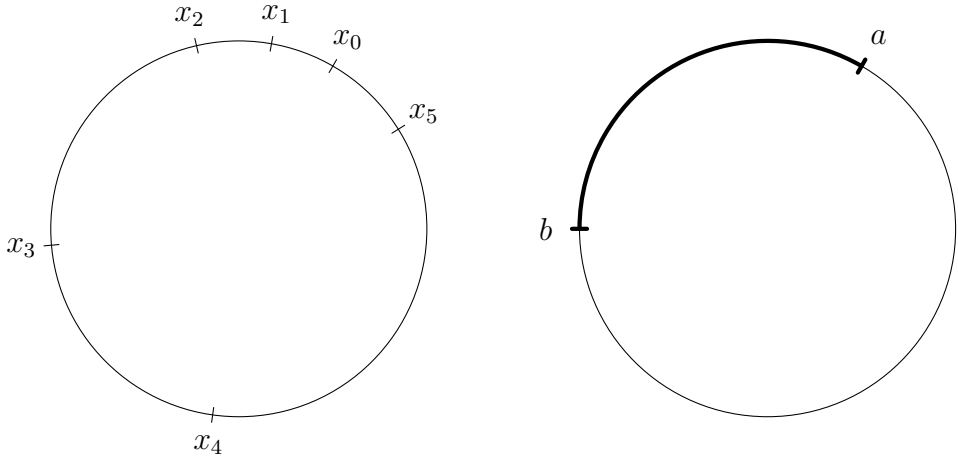
\end{Definition}
\medskip

\begin{Remark}
[Predecessors and successors in $\cZ$]
\label{rmk:predecessors}
It follows directly from Definition \ref{D:admissible} that:
\begin{itemize}
\setlength\itemsep{4pt}

  \item  Each $z \in \Z$ has a unique predecessor $z^- \in \Z$, i.e.\ a unique element $z^- \in \Z$ such that $(z^-,z) \cap \Z = \varnothing$.

  \item  Each $z \in \Z$ has a unique successor $z^+ \in \Z$, i.e.\ a unique element $z^+ \in \Z$ such that $(z,z^+) \cap \Z = \varnothing$. 

\end{itemize}
Figure \ref{fig:admissible subset} in the introduction shows an example of an admissible subset and of the predecessor and successor of one of its elements.
\end{Remark}
\medskip

\begin{Remark}
[A dichotomy for sequences in $\cZ$]
\label{R:dichotomy}
Since $\Z$ is discrete, each sequence $\{z_i\}_{i \in \ZZ_{\geqslant 0}}$ from $\Z$ which converges to a $z \in \Z$ has to satisfy $z_i = z$ for $i \gg 0$. Thus each convergent sequence from $\Z$ that is not constant from some step converges to an element of $L(\Z)$. Furthermore, since $S^1$ is compact, each sequence $\{z_i\}_{i \in \ZZ_{\geqslant 0}}$ from $\Z$ has a convergent subsequence $\{z'_i\}_{i \in \ZZ_{\geqslant 0}}$ converging to some point in $\overline{\Z}$.

There is hence a dichotomy:
\begin{itemize}
\setlength\itemsep{4pt}

  \item Either the subsequence $\{z'_i\}_{i \in \ZZ_{\geqslant 0}}$ converges to $z \in \Z$, and $z'_i$ is constant from some step,

  \item or the subsequence $\{z'_i\}_{i \in \ZZ_{\geqslant 0}}$ 	converges to a proper limit point $a \in L(\Z)$ and $z'_i$ is not constant from any step.
  
\end{itemize}
In the latter case, by refining the sequence further if necessary, we can suppose that the sequence is increasing 	(i.e.\ $z'_0 \leqslant z'_1 \leqslant \ldots \leqslant z'_k < a$ for each $k \in \ZZ_{\geqslant 0}$) or decreasing (i.e.\ $z'_0 \geqslant z'_1 \geqslant \ldots \geqslant z'_k > a$ for each $k \in \ZZ_{\geqslant 0}$). 
\end{Remark}
\medskip

\begin{Definition}
[Convergence from below and above]
\label{def:convergence_from_below_and_above}
Let $\{z_i\}_{i \in \ZZ_{\geqslant 0}}$ be a convergent sequence from $\Z$. If $\{z_i\}_{i \in \ZZ_{\geqslant 0}}$ converges to $p \in \overline{\Z}$, then we write $z_i \to p$.  
\begin{itemize}
\setlength\itemsep{4pt}

  \item  We say that $z_i \to p$ {\em from below} if there is a $\mu \in S^1 \setminus \{p\}$ such that $z_i \in [\mu, p]$ from some step.

  \item  We say that $z_i \to p$ {\em from above} if there is a $\nu \in S^1 \setminus \{p\}$ such that $z_i \in [p, \nu]$ from some step.

\end{itemize}
If $z_i \to p$ with $p \in \Z$, then $z_i = p$ from some step by Remark \ref{R:dichotomy}, so $z_i \to p$ from below {\em and} from above. 
\end{Definition}
\medskip

\begin{Definition}
[Infimum and supremum]
Let $a,b\in S^1$.  Each non-empty subset $P \subseteq [a,b] \cap \Z \subset S^1$ has an infimum and a supremum, and there is a decreasing sequence in $P$
converging to its infimum, denoted by $\inf_{[a,b]} P$, and an increasing sequence in $P$ converging to its supremum, denoted by $\sup_{[a,b]} P$. 

Note that the infimum and the supremum are contained in the interval $[a,b]$, but not necessarily in $P$ or in $\Z$.  If $i = \inf_{[a,b]} P$  and $s=\sup_{[a,b]} P$ then $a \leqslant i \leqslant p \leqslant s \leqslant b$ for each $p \in P$.

Note that any increasing or decreasing sequence in an interval $[a,b]$ is convergent to a point in that interval.
\end{Definition}
\medskip

Recall that $\Z$ contains infinitely many points by Definition \ref{D:admissible}(i).  For each $z \in \Z$, the sequence $\{ z^{+n} \}_{ n \geqslant 0 }$ defined iteratively by $z^{ +0 } = z$ and $z^{ +(k+1) } = ( z^{ +k } )^+$ for each $k \in \ZZ_{\geqslant 0}$ is an increasing sequence.  Moreover, there are infinitely many points of $\Z$ in $[z,z^-]$ whence $z \leqslant z^{ +n } < z^-$.  So $\{z^{+n}\}_{n \geqslant 0}$ is an increasing sequence in $[z,z^-]$ and it must converge to a limit point.

\begin{Definition}
The limit point of $\{z^{+n}\}_{n \geqslant 0}$ will be denoted $z^{ +\infty }$.  Symmetrically, we can define $\{ z^{ -n } \}_{n \geqslant 0}$ and its limit point will be denoted $z^{ -\infty }$.
\end{Definition}
\medskip

\begin{Lemma}\label{L:limit}
We have $[z,z^{+\infty}] \cap L(\Z) = \{z^{+\infty}\}$ and $[z^{-\infty},z] \cap L(\Z) = \{z^{-\infty}\}$.
\end{Lemma}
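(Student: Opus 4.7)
The plan is to first pin down exactly which elements of $\Z$ lie in the interval $[z, z^{+\infty})$, and then rule out any other proper limit point in this interval by invoking the two-sided limit condition. Throughout, I would work in the linearly-ordered interval $[z, z^-]$, which contains $[z, z^{+\infty}]$, so that phrases like ``strictly increasing'' and ``converging from above/below'' have an unambiguous meaning.

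\textbf{Step 1: Identify $\Z \cap [z, z^{+\infty})$.} I claim $\Z \cap [z, z^{+\infty}) = \{z^{+n} : n \in \ZZ_{\geqslant 0}\}$. The inclusion $\supseteq$ is immediate from the construction of the sequence $\{z^{+n}\}_{n \geqslant 0}$, which is strictly increasing in $[z, z^-]$ and converges to $z^{+\infty}$. For the reverse inclusion, take $w \in \Z \cap [z, z^{+\infty})$. Since $z^{+n} \to z^{+\infty}$ from below and $w < z^{+\infty}$ in the induced linear order, the set $\{n \geqslant 0 : z^{+n} > w\}$ is non-empty; let $n$ be its minimum. Then $n \geqslant 1$ (because $z^{+0} = z \leqslant w$) and $z^{+(n-1)} \leqslant w < z^{+n}$. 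By Remark \ref{rmk:predecessors} the open interval $(z^{+(n-1)}, z^{+n})$ contains no element of $\Z$, which forces $w = z^{+(n-1)}$.

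\textbf{Step 2: Handle the endpoints.} Since $z \in \Z$ and $L(\Z) = \overline{\Z} \setminus \Z$, we have $z \notin L(\Z)$. On the other hand, $z^{+\infty} \in L(\Z)$: the sequence $\{z^{+n}\}$ is strictly increasing, hence not eventually constant, so by Remark \ref{R:dichotomy} its limit $z^{+\infty}$ lies in $L(\Z)$. Thus the claim reduces to showing that no point of the open interval $(z, z^{+\infty})$ lies in $L(\Z)$.

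\textbf{Step 3: Rule out interior proper limit points.} Suppose for contradiction that $a \in (z, z^{+\infty}) \cap L(\Z)$. By the two-sided limit condition (Definition \ref{D:admissible}(iii)), there is a sequence $\{w_i\}_{i \in \ZZ_{\geqslant 0}}$ in $\Z$ with $w_i \to a$ from above; refining if necessary, I may take $\{w_i\}$ to be strictly decreasing (Remark \ref{R:dichotomy}). Since $a < z^{+\infty}$ in the linear order on $[z, z^-]$, eventually $w_i \in (a, z^{+\infty})$, so Step 1 gives $w_i \in \{z^{+n} : n \in \ZZ_{\geqslant 0}\}$ eventually. But $\{z^{+n}\}$ is strictly increasing with limit $z^{+\infty}$, so any strictly decreasing sequence drawn from this set is eventually constant, equal to some $z^{+n}$. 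This forces $a = z^{+n} \in \Z$, contradicting $a \in L(\Z)$.

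\textbf{Step 4: The dual statement.} The identity $[z^{-\infty}, z] \cap L(\Z) = \{z^{-\infty}\}$ is proved by the mirror-image argument, swapping successors for predecessors and ``above'' for ``below''.

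The main technical obstacle is the bookkeeping for the cyclic order: one must fix a linear representative of the arc $[z, z^-]$ so that ``$w < z^{+\infty}$'' and ``converging from above'' are well-defined and compatible, and then verify that the two-sided limit condition delivers a sequence whose tail actually lies inside this arc. Once this is set up, the argument is a short comparison of a monotone-decreasing sequence with a strictly increasing one inside a discrete set.
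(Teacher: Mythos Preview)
Your argument is correct in substance and shares its core with the paper's proof: both rest on the fact that $\Z \cap [z,z^{+\infty})$ consists precisely of the points $z^{+n}$, so that any would-be limit point $a \in (z,z^{+\infty})$ is trapped between two consecutive successors. The paper, however, finishes more directly and without invoking the two-sided limit condition: having located $m$ with $z^{+m} < a < z^{+(m+1)}$, it simply observes that \emph{any} sequence in $\Z$ converging to $a$ must eventually lie in the open interval $(z^{+m},z^{+(m+1)})$, which contains no points of $\Z$ by the very definition of successor. Your Step~3 instead manufactures a sequence approaching $a$ from above via Definition~\ref{D:admissible}(iii) and then argues about monotone sequences in $\{z^{+n}\}$; this works, but the extra hypothesis is not needed and the detour is longer.

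One wording glitch to fix: in Step~3 you write that ``any strictly decreasing sequence drawn from this set is eventually constant''. Taken literally this is self-contradictory. What you want is that $\{z^{+n}:n\geqslant 0\}$, being order-isomorphic to $\ZZ_{\geqslant 0}$, admits no infinite strictly decreasing sequence at all; equivalently, its only accumulation point in $S^1$ is $z^{+\infty}$, so a sequence in this set cannot converge to $a \neq z^{+\infty}$. Either formulation gives the contradiction cleanly.
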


\begin{proof}
	We only prove that $[z,z^{+\infty}] \cap L(\Z) = \{z^{+\infty}\}$; the equality $[z^{-\infty},z] \cap L(\Z) = \{z^{-\infty}\}$ is proved symmetrically. 
	The inclusion $\{z^{+\infty}\} \subseteq [z,z^{+\infty}] \cap L(\Z)$ is clear by definition. The inclusion $[z,z^{+\infty}] \cap L(\Z) \subseteq \{z^{+\infty}\}$ amounts to showing that 
	$[z,z^{+\infty}) \cap L(\Z) = \varnothing$, which again amounts to showing that $(z,z^{+\infty}) \cap L(\Z) = \varnothing$, since $z\in \Z$, and hence $z \notin L(\Z)$. 
	So suppose for a contradiction that there exists $x\in (z,z^{+\infty}) \cap L(\Z)$. In particular,
	$x\notin\Z$ and there exists a sequence 
	$\{z_i\}_{i \in \ZZ_{\geqslant 0}}$ from $\Z$ converging to $x$. 
	By construction we have that $z^{+\infty} = \sup_{[z,z^-]}\{z^{+n} \mid n \geqslant 0\}$, so 
	we can find $m \geqslant 0$ such that $z^{+m} < x < z^{+(m+1)}$ (note that $x$ can not equal any of the $z^{+n}$
	since $x\notin \Z$).
But since the sequence $\{z_i\}_{i \in \ZZ_{\geqslant 0}}$ converges to $x$, the open neighbourhood
$(z^{+m},z^{+(m+1)})$ of $x$ contains infinitely many entries of the sequence $\{z_i\}_{i \in \ZZ_{\geqslant 0}}$.
Since the $z_i$ are in $\Z$, this clearly contradicts the definition of $z^{+(m+1)}$.
\end{proof}

\section{The Igusa--Todorov cluster categories $\CC(\Z)$ of Dynkin type $A_{ \infty }$}
\label{sec:IT}

\begin{Setup}
In the rest of the paper, $k$ is an algebraically closed field.
\end{Setup}
\medskip

Igusa and Todorov \cite{IT:cyclicposets} constructed a cluster category $\cC( \cZ )$.  They proved in \cite[sec.\ 2.4]{IT:cyclicposets} that it has the following properties.
\begin{enumerate}
\setlength\itemsep{4pt}

  \item  $\cC( \cZ )$ is a $k$-linear Hom-finite Krull--Schmidt triangulated category.

  \item  $\cC( \cZ )$ is $2$-Calabi--Yau, that is, there are natural isomorphisms
\[  
  \Ext_{ \cC( \cZ ) }^1(X,Y) \cong \mathrm{D}\Ext_{ \cC( \cZ ) }^1(Y,X)
\]
where $\mathrm{D}(-) = \Hom_k( -,k )$.
  
  \item  If $X = \{ x_0,x_1 \}$ is a diagonal of $\cZ$, then there is an indecomposable object $E( X ) = E( x_0,x_1 )$ in $\cC( \cZ )$, and this induces a bijection from diagonals of $\cZ$ to isomorphism classes of indecomposable objects of $\cC( \cZ )$.  

  \item  The suspension functor acts on the indecomposable objects $E(X)$ by
	\[
		\Sigma( E( x_0,x_1 ) ) = E( x_0^-,x_1^- ).
	\]

  \item  We have
\begin{equation*}
\label{eqn:Ext-spaces}
  \Ext_{ \CC(\Z) }^1( E(X),E(Y) ) \cong
	\begin{cases}
		k \text{ if $X$ and $Y$ cross,}\\
		0 \text{ otherwise. }
	\end{cases}
\end{equation*}

  \item  Since
$\Hom_{\CC(\Z)}(E(X),E(Y)) \cong \Ext_{\CC(\Z)}^1(E(X),\Sigma^{-1}E(Y))$,
it follows from (iv) and (v) that $\Hom_{ \CC(\Z) }( E(X),E(Y) )$ is isomorphic to
\[
	\begin{cases}
		k \text{ if we can write $X = \{ x_0,x_1 \}$ and $Y = \{ y_0,y_1 \}$ with $x_0 \leqslant y_0 \leqslant x_1^{ -- } < x_1 \leqslant y_1 \leqslant x_0^{--}$, }\\
		0 \text{ otherwise. }
	\end{cases}
\]

  \item  In part (vi), if $X = \{ x_0,x_1 \}$ and $Y = \{ y_0,y_1 \}$ with $x_0 \leqslant y_0 \leqslant x_1^{ -- } < x_1 \leqslant y_1 \leqslant x_0^{--}$, then a morphism $E(X) \to E(Y)$ factors through $E(S)$ if and only if we can write $S = \{ s_0,s_1 \}$ with $x_0 \leqslant s_0 \leqslant y_0$ and $x_1 \leqslant s_1 \leqslant y_1$.

\end{enumerate}

In part (v) observe that non-vanishing of $\Ext^1$ is symmetric in the two arguments, as indeed it must be by the $2$-Calabi--Yau property from (ii).
Figure \ref{fig:morphisms} provides an illustration of morphisms between indecomposable objects. 

\begin{figure}
\begin{center}
\begin{tikzpicture}[scale=5,cap=round,>=latex]
        \draw (0,0) circle(0.5cm);
	\node at (60:0.58) {$x_0$};
	\node at (180:0.58) {$y_1$};
	\node at (250:0.58) {$x_1$};
	\node at (-20:0.58) {$y_0$};
	\node at (150:0.58) {$z_1$};
        \draw (60:0.48) -- (60:0.52);
        \draw (70:0.48) -- (70:0.52);
        \draw (50:0.48) -- (50:0.52);
        \draw (180:0.48) -- (180:0.52);
         \draw (190:0.48) -- (190:0.52);
          \draw (170:0.48) -- (170:0.52);
        \draw (240:0.48) -- (240:0.52);
         \draw (260:0.48) -- (260:0.52);
          \draw (250:0.48) -- (250:0.52);
        \draw (-20:0.48) -- (-20:0.52);
        \draw (-10:0.48) -- (-10:0.52);
	\draw (-30:0.48) -- (-30:0.52);
        \draw (140:0.48) -- (140:0.52);
        \draw (150:0.48) -- (150:0.52);
        \draw (160:0.48) -- (160:0.52);

    \draw (60:0.5) .. controls (60:0.3) and (250:0.3) .. (250:0.5);	
    \draw (180:0.5) .. controls (180:0.3) and (-20:0.3) .. (-20:0.5);
    \draw (60:0.5) .. controls (60:0.3) and (150:0.3) .. (150:0.5);
	
  \end{tikzpicture}
\end{center}
\caption{The non-zero morphism spaces between the indecomposable objects corresponding to the pictured diagonals are precisely $\Hom(E(x_0,x_1),E(y_0,y_1))$, $\Hom(E(y_0,y_1),E(x_0,x_1))$ and $\Hom(E(x_0,z_1),E(x_0,x_1))$, as well as the endomorphism spaces of each of the three indecomposable objects. All other morphism spaces between these three objects are zero.  See Section \ref{sec:IT}(vi).}
\label{fig:morphisms}
\end{figure}
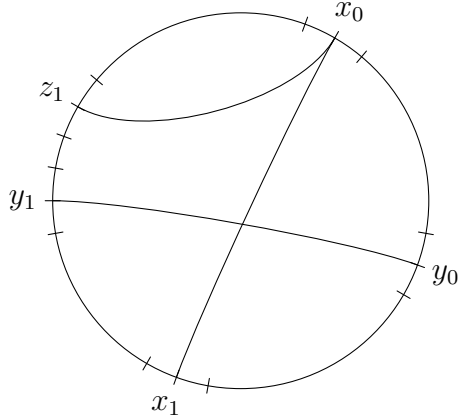

\section{Precovering subcategories of the cluster categories $\CC(\Z)$} 
\label{sec:precovering}

This section provides the following main ingredient for the proof of Theorem \ref{thm:A}.

\begin{Theorem}
\label{T:precovering}
Let $\X$ be a set of diagonals of $\Z$. Then $\add E(\X)$ is a precovering subcategory of $\CC(\Z)$ if and only if $\X$ satisfies conditions PC1 and PC2 from Definition \ref{def:PC}.
\end{Theorem}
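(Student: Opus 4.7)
The plan is to fix a diagonal $Y = \{y_0, y_1\}$ of $\Z$ and translate the precover question at $E(Y)$ into a combinatorial problem. By Section \ref{sec:IT}(vi), the diagonals $\{a,b\} \in \X$ with $\Hom(E(\{a,b\}), E(Y)) \neq 0$ are precisely those with one vertex in $A := [y_1^{++}, y_0] \cap \Z$ and the other in $B := [y_0^{++}, y_1] \cap \Z$; call this set $\X_Y$ and regard it as a subset of $A \times B$. By (vii), a non-zero morphism $E(\{a,b\}) \to E(Y)$ factors through $E(\{a',b'\}) \in \X_Y$ iff $(a,b) \leq (a',b')$ in the product linear order on $A \times B$. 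Thus $\add E(\X)$ precovers $E(Y)$ iff $\X_Y$ admits a finite cofinal subset, and the precover is then the direct sum of the corresponding $E(X_j)$. By Remark \ref{R:dichotomy} together with Definition \ref{def:convergence_from_below_and_above}, for sequences in $A$ or $B$ converging to a point of the arc, ``from below'' in $S^1$ is equivalent to being increasing in the local order, and ``from above'' to being decreasing.

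For the ``if'' direction I would prove, under PC1 and PC2 (equivalently, condition PC of Remark \ref{rmk:PC}), two combinatorial claims about $\X_Y$. \emph{(1) Every weakly increasing chain in $\X_Y$ has an upper bound in $\X_Y$.} Reducing via the dichotomy to a chain strictly monotone in at least one coordinate, that coordinate's limit lies in $L(\Z)$, hence is strictly interior to the corresponding arc $A$ or $B$ (whose endpoints are in $\Z$). Applying PC to this ``from below'' situation produces an $\X$-sequence whose coordinates converge to the same pair of limits from above; interiority of the limits ensures this sequence lies in $A \times B$ for large index, giving the desired upper bound. \emph{(2) $\X_Y$ has only finitely many maximal elements.} An infinite antichain of maximal elements may, after extraction, be assumed monotone in each coordinate, and being an antichain (not a chain) forces one coordinate strictly increasing and the other strictly decreasing. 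The mixed limits $\alpha \in A$, $\beta \in B$ are distinct since $A, B$ are disjoint arcs, so PC produces an $\X$-sequence converging to $(\alpha, \beta)$ from above at both coordinates; for large index this sequence sits in $\X_Y$ and strictly dominates the antichain tail, contradicting maximality. Combining (1) (via Zorn) and (2) yields the finite cofinal set.

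For the ``only if'' direction I contrapose. Suppose PC1 fails, witnessed by $\{x^i_0, x^i_1\} \in \X$ with $x^i_0 \to p$ and $x^i_1 \to q$ from below, $p \neq q$, and no (above, above) $\X$-sequence to $(p, q)$; I describe the non-degenerate case $p, q \in L(\Z)$ (the cases where $p$ or $q$ lies in $\Z$ go through the same scheme with cosmetic modifications). Via compactness and the dichotomy, the absence of an (above, above) sequence is equivalent to the existence of open arcs $(p, \mu)$ and $(q, \nu)$ such that no diagonal in $\X$ has one vertex in $(p, \mu)$ and the other in $(q, \nu)$. By the two-sided limit condition I choose $y_0 \in \Z \cap (p, \mu)$ and $y_1 \in \Z \cap (q, \nu)$; then $\{x^i_0, x^i_1\} \in \X_Y$ for $i$ large. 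Any $\{a',b'\} \in \X_Y$ dominating infinitely many tail terms must have $a' \geq p$ and $b' \geq q$; since $p, q \notin \Z$ this forces $a' \in (p, y_0] \subseteq (p, \mu)$ and $b' \in (q, y_1] \subseteq (q, \nu)$, contradicting the choice of $\mu, \nu$. A pigeonhole argument over any putative finite cover now rules out a finite cofinal subset of $\X_Y$, so $\add E(\X)$ does not precover $E(Y)$. The failure of PC2 is handled symmetrically with a (below, above) sequence and the analogous choice of $Y$.

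The main obstacle I anticipate is bookkeeping the correspondence between ``from below/above'' in $S^1$ and monotonicity in the local orders on $A$ and $B$, and verifying at each invocation of PC that the output sequence actually lies in $A \times B$ rather than spilling past the boundary vertices $y_1^{++}, y_0, y_0^{++}$, or $y_1$. This reduces each time to checking that the limit point involved belongs to $L(\Z)$, hence is strictly interior to the closed arc it sits in; the borderline possibility that the limit coincides with $y_0$ or $y_1$ is excluded by Remark \ref{R:dichotomy}, which forces such a chain to be eventually constant and so handled separately.
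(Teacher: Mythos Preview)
Your approach is correct. The order-theoretic reformulation---identifying precovers of $E(Y)$ with finite cofinal subsets of $\X_Y \subseteq A \times B$ under the product order---is exactly right by Section~\ref{sec:IT}(vi)--(vii); the paper makes the same translation in Lemma~\ref{L:precovering condition archaic}. Your ``only if'' direction is the contrapositive of the paper's Proposition~\ref{P:precovering implies PC1 and PC2}: both pick $Y$ with endpoints just past $p$ and $q$ and exploit the (non-)existence of a diagonal in $(p,\mu)\times(q,\nu)$, the paper arguing directly and iterating over a sequence of such $Y$'s.

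The ``if'' direction is where the two proofs genuinely diverge. The paper builds the cofinal set constructively: it defines $(s_0^l,s_1^l)$ as iterated suprema of the sets $W_0,W_1$ of Definition~\ref{N:some sets}, invokes Lemma~\ref{L:supremum general} (itself a consequence of PC1/PC2) to ensure these suprema lie in $\Z$, and proves termination by showing an infinite run would contradict PC2. In effect the paper is enumerating the maximal elements of $\X_Y$ one at a time in order of decreasing first coordinate. Your Zorn-plus-finite-antichain route is more abstract and sidesteps the inductive bookkeeping; both arguments rest on the same mechanism (PC handles increasing chains, PC2 kills infinite antichains of maximals), but yours packages it conceptually while the paper's makes the precover explicit.

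One point to tighten: your Claim~(1) is stated for weakly increasing \emph{sequences}, but Zorn needs upper bounds for arbitrary chains. This is harmless because $\Z$, being discrete in the separable space $S^1$, is countable, so $\X_Y$ is countable and every chain has cofinality at most $\omega$; you should say so. The degenerate cases in ``only if'' (one of $p,q$ in $\Z$) do go through as you claim, taking $y_0 = p$ or $y_1 = q$ respectively, but it is worth writing out at least one of them since the ``no $(p,\mu)\times(q,\nu)$ diagonal'' reformulation changes shape slightly when a limit is in $\Z$.
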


The proof can be found at the end of the section.  First we require some preparation, not least the following definition due to \cite[sec.\ 1]{E}.

\begin{Definition}
[Precovers]
\label{D:precovering}
	Let $\T$ be a category, $X\subseteq \T$ a full subcategory.  
	\begin{enumerate}
	\setlength\itemsep{4pt}
	\item Let $t\in \T$ be an object. An object $x\in X$ together with a morphism $f \colon x \to t$
	is called an {\em $X$-precover of $t$} if each morphism 
	$g \colon x' \to t$ with $x' \in X$ factors through $f$.  That is, there exists a morphism $h \colon x' \to x$ 
	such that $g = f\circ h$.
	\[
	 \xymatrix{& x \ar[d]^f \\
		  x' \ar[r]_-g \ar@{-->}[ru]^h & t}
	\]	
\item The subcategory $X \subseteq \T$ is called {\em precovering} if each object 
$t\in \T$ has an $X$-precover. 	
\end{enumerate}
\end{Definition}
\medskip

\begin{Definition}
Let $\cT$ be an additive category.  An {\em additive subcategory} $X$ of $\cT$ is a full subcategory of $\cT$ closed under isomorphisms, finite direct sums, and direct summands.
\end{Definition}
\medskip

\begin{Remark}
Since $\CC(\Z)$ is Krull-Schmidt, its additive subcategories are determined by the indecomposable objects they contain. Thus, there is a one-to-one correspondence between additive subcategories of $\CC(\Z)$ and sets of diagonals of $\Z$.

Given a set of diagonals $\X$ we write $E(\X)$ for the corresponding set of indecomposable objects of $\CC(\Z)$.  The corresponding additive subcategory of $\CC(\Z)$ is given by $\add E(\X)$. 
\end{Remark}
\medskip

\begin{Lemma}
\label{L:precover decomposition}
Let $D \subseteq \CC(\Z)$ be an additive subcategory, $e \in \CC(\Z)$ an indecomposable object, and 
\[
  \delta \colon d_1 \oplus \ldots \oplus d_n \to e
\]
a morphism in $\CC(\Z)$ with $d_i \in D$ indecomposable for each $i \in \{1, \ldots, n\}$. 

We can write $\delta = (\delta_1, \ldots, \delta_n)$, and $\delta$ is a $D$-precover of $e$ if and only if each morphism $\varphi \colon d \to e$ with $d \in D$ indecomposable factors through at least one of the $\delta_i$.
\end{Lemma}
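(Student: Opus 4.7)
The plan is to handle both directions separately and use crucially the fact from Section~\ref{sec:IT}(vi) that $\Hom_{\CC(\Z)}(d,e)$ is at most one-dimensional whenever $d,e$ are indecomposable, along with the Krull--Schmidt property of $\CC(\Z)$ and the fact that an additive subcategory is closed under direct summands.

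For the ``if'' direction, suppose every morphism $\varphi\colon d\to e$ from an indecomposable $d\in D$ factors through some $\delta_i$. Given an arbitrary test morphism $f\colon x\to e$ with $x\in D$, I would decompose $x\cong x_1\oplus\cdots\oplus x_m$ into indecomposable summands; each $x_j$ lies in $D$ because $D$ is closed under direct summands. Writing $f=(f_1,\ldots,f_m)$ in components, the hypothesis supplies, for each $j$, an index $i(j)$ and a morphism $h_j\colon x_j\to d_{i(j)}$ with $f_j=\delta_{i(j)}\circ h_j$. Assembling these $h_j$ into a morphism $h\colon x\to d_1\oplus\cdots\oplus d_n$ (zero in all unused components) yields $f=\delta\circ h$, showing that $\delta$ is a $D$-precover.

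For the ``only if'' direction, suppose $\delta$ is a precover and let $\varphi\colon d\to e$ with $d\in D$ indecomposable. The precover property gives a morphism $h=(h_1,\ldots,h_n)^{T}\colon d\to d_1\oplus\cdots\oplus d_n$ with $\varphi=\delta\circ h=\sum_{i=1}^{n}\delta_i h_i$. If $\varphi=0$, then $\varphi$ trivially factors through any $\delta_i$ via the zero morphism. If $\varphi\neq0$, then by Section~\ref{sec:IT}(vi) the space $\Hom_{\CC(\Z)}(d,e)$ is one-dimensional, say spanned by $\varphi$ itself. Each $\delta_i h_i$ lies in this space, so $\delta_i h_i=\lambda_i\varphi$ for some $\lambda_i\in k$, and summing gives $\sum_i\lambda_i=1$. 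In particular some $\lambda_{i_0}\neq0$, so $\varphi=\lambda_{i_0}^{-1}\delta_{i_0}h_{i_0}=\delta_{i_0}\circ(\lambda_{i_0}^{-1}h_{i_0})$, exhibiting a factorization through a single $\delta_{i_0}$.

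The only real step that requires care is the ``only if'' direction: without the one-dimensionality of $\Hom$-spaces between indecomposables, the factorization $\varphi=\sum_i\delta_i h_i$ coming from the precover property would not yield a factorization through a \emph{single} component, so this feature of $\CC(\Z)$ is essential. Both directions are otherwise formal manipulations using Krull--Schmidt decomposition and the definition of a precover.
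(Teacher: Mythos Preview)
Your proof is correct and follows essentially the same approach as the paper. The only difference is cosmetic: in the ``if'' direction the paper simply declares the implication ``clear'' while you spell out the Krull--Schmidt decomposition of the test object, and in the ``only if'' direction the paper picks an index $i$ with $\delta_i h_i\neq 0$ directly whereas you first write $\delta_i h_i=\lambda_i\varphi$ and use $\sum_i\lambda_i=1$---but this is the same one-dimensionality argument.
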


\begin{proof}
It is clear that if each morphism $\varphi \colon d \to e$ with $d \in D$ indecomposable factors through at least one of the $\delta_i$, then it also factors through $\delta$ which is hence a $D$-precover. 
	
Conversely, assume that $\delta$ is a $D$-precover.  Let $\varphi \colon d \to e$ be a morphism in $\CC(\Z)$ with $d \in D$ indecomposable.  If $\varphi = 0$, then $\varphi$ factors trivially through each $\delta_i$ and we are done. If $\varphi \neq 0$, then choose a morphism $\varphi' \colon d \to d_1 \oplus \ldots \oplus d_n$ with $\varphi = \delta \circ \varphi'$.  Writing $\varphi'$ in components $\varphi'_i$, this means $\varphi = \delta_1\varphi'_1 + \ldots + \delta_n\varphi'_n$.  Because $\varphi \neq 0$ there exists an $i \in \{1, \ldots, n\}$ such that $\delta_i\circ \varphi'_i \neq 0$. 
Now $\varphi$ and $\delta_i \circ \varphi'_i$ are non-zero elements of $\Hom_{\CC(\Z)}( d,e )$ which must be a one-dimensional $k$-vector space by Section \ref{sec:IT}(vi).  Hence $\varphi = \alpha\, \delta_i\circ \varphi'_i$ for some $\alpha \in k$, so $\varphi$ factors through $\delta_i$.
\end{proof}

\begin{Lemma}
\label{L:precovering condition archaic}
Let $\X$ be a set of diagonals of $\Z$. Then $\add E(\X)$ is a precovering subcategory of $\CC(\Z)$ if and only if $\X$ satisfies the following condition:
	
For each diagonal $Y = \{y_0,y_1\}$ of $\Z$ there is a finite set of diagonals $X^1, \ldots, X^l \in \X$, such that for each $X=\{x_0,x_1\} \in \X$ with 
\[
  x_0 \leqslant y_0 \leqslant x_1^{--} < x_1 \leqslant y_1 \leqslant x_0^{--}
\]
there is an $i \in \{1, \ldots, l\}$ with $X^i = \{x^i_0,x^i_1\}$ and 
\[
  x_0 \leqslant x^i_0 \leqslant y_0 \text{ and } x_1 \leqslant x^i_1 \leqslant y_1.
\]
\end{Lemma}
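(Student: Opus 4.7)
The plan is to translate the precovering condition into the Hom-combinatorics of $\CC(\Z)$ via Section \ref{sec:IT}(vi) and (vii), which respectively describe when $\Hom( E(X), E(Y) )$ is non-zero and when a morphism $E(X) \to E(Y)$ factors through $E(S)$. Since $\CC(\Z)$ is Hom-finite and Krull--Schmidt, every object of $\add E(\X)$ decomposes as a finite direct sum of indecomposables of the form $E(X^i)$ with $X^i \in \X$, and it suffices to produce precovers for the indecomposable objects $E(Y)$, as precovers of arbitrary objects assemble by direct sum.

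For the forward direction, assume $\add E(\X)$ is precovering and fix a diagonal $Y$. Pick an $\add E(\X)$-precover $\delta = (\delta_1, \ldots, \delta_l) \colon E(X^1) \oplus \cdots \oplus E(X^l) \to E(Y)$ with $X^1, \ldots, X^l \in \X$; this is the desired finite set. Given $X \in \X$ with $x_0 \leqslant y_0 \leqslant x_1^{--} < x_1 \leqslant y_1 \leqslant x_0^{--}$, Section \ref{sec:IT}(vi) supplies a non-zero morphism $\varphi \colon E(X) \to E(Y)$. By Lemma \ref{L:precover decomposition}, $\varphi$ factors through some component $\delta_i$, and Section \ref{sec:IT}(vii) translates this factoring into the required inequalities $x_0 \leqslant x^i_0 \leqslant y_0$ and $x_1 \leqslant x^i_1 \leqslant y_1$.

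For the converse, assume the combinatorial condition and fix a diagonal $Y$. Let $X^1, \ldots, X^l$ be the promised finite set. The first step is to observe that each $X^i$ admits a non-zero morphism $\delta_i \colon E(X^i) \to E(Y)$: the hypothesis provides some $X \in \X$ satisfying the non-zero Hom condition whose witnessing inequalities $x_0 \leqslant x^i_0 \leqslant y_0$ and $x_1 \leqslant x^i_1 \leqslant y_1$ propagate through the double-predecessor operation $(-)^{--}$ to give $x^i_0 \leqslant y_0 \leqslant (x^i_1)^{--} < x^i_1 \leqslant y_1 \leqslant (x^i_0)^{--}$, so (vi) produces $\delta_i$. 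Assembling these into $\delta = (\delta_1, \ldots, \delta_l) \colon \bigoplus_i E(X^i) \to E(Y)$, I verify via Lemma \ref{L:precover decomposition} that $\delta$ is an $\add E(\X)$-precover: a non-zero $\varphi \colon E(X) \to E(Y)$ with $X \in \X$ forces $X$ to satisfy the non-zero Hom condition by (vi), the hypothesis furnishes a suitable $X^i$, and (vii) factors $\varphi$ through $E(X^i)$ as $\beta \circ \alpha$; since $\Hom( E(X^i), E(Y) ) \cong k$ by (vi) and $\delta_i \neq 0$, the morphism $\beta$ is a scalar multiple of $\delta_i$, so $\varphi$ factors through $\delta_i$.

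The main technical hurdle I expect is the order-theoretic step in the converse direction: showing that each chosen $X^i$ admits a non-zero morphism to $E(Y)$ requires verifying that the inequalities $x_0 \leqslant x^i_0$ and $x_1 \leqslant x^i_1$ propagate through the double-predecessor operation within the appropriate cyclic interval, and some care is needed near the endpoints of the linearization from $x_0$ where $(x^i_0)^{--}$ can lie in various positions (including ones that appear to wrap around) relative to $x_0^{--}$. The remaining bookkeeping --- the degenerate case $l = 0$ (no $X \in \X$ maps non-trivially to $E(Y)$, so the zero map is a precover), and the reduction from indecomposable $E(Y)$ to arbitrary objects of $\CC(\Z)$ by Krull--Schmidt --- is routine.
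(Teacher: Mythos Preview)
Your approach is the same as the paper's: combine Section~\ref{sec:IT}(vii) with Lemma~\ref{L:precover decomposition}. The paper simply states this and moves on; you spell it out, which is fine.

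However, the ``main technical hurdle'' you anticipate is illusory, and your handling of it introduces a small gap. You claim that each $X^i$ in the finite list admits a non-zero morphism to $E(Y)$ because ``the hypothesis provides some $X \in \X$'' witnessing that $X^i$; but the hypothesis does not guarantee that every $i$ is used by some $X$. More importantly, you never need the order-theoretic propagation through $(-)^{--}$ at all. Simply define $\delta_i$ to be any non-zero element of $\Hom(E(X^i),E(Y))$ when that space is non-zero, and $\delta_i = 0$ otherwise. In the verification step, once a non-zero $\varphi \colon E(X) \to E(Y)$ factors through $E(X^i)$ via (vii) as $\beta \circ \alpha$, the fact that $\varphi \neq 0$ forces $\beta \neq 0$, hence $\Hom(E(X^i),E(Y)) \neq 0$; so $\delta_i$ was chosen non-zero and $\beta$ is a scalar multiple of it. This both closes the gap and removes the need to verify the inequalities of (vi) for $X^i$ directly.
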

 
\begin{proof}
This is immediate by combining Section \ref{sec:IT}(vii) with Lemma \ref{L:precover decomposition}.
\end{proof}

\begin{Proposition}
\label{P:precovering implies PC1 and PC2}
Let $\X$ be a set of diagonals of $\Z$. If $\add E(\X)$ is a precovering subcategory of $\cC( \cZ )$ then $\X$ satisfies conditions PC1 and PC2.
\end{Proposition}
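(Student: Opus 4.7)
I proceed directly: assume $\add E(\X)$ is precovering, and deduce PC1 and PC2 simultaneously, since the two arguments differ only in a couple of cyclic-order checks. Fix a sequence $\{x_0^i, x_1^i\}_{i \in \ZZ_{\geqslant 0}}$ in $\X$ witnessing the hypothesis of PC1 or PC2 at limit points $p \neq q$. The plan is to probe $\X$ with a family of test diagonals $Y^k = \{y_0^k, y_1^k\}$ chosen so that each $E(Y^k)$ receives nonzero morphisms from almost all of the $E(\{x_0^i, x_1^i\})$; to extract finite $\add E(\X)$-precovers of the $E(Y^k)$ via Lemma \ref{L:precovering condition archaic}; and then to show that those precovers are themselves forced to realise the ``above/above'' sequence required by the conclusion.

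Concretely, using the two-sided limit condition of Definition \ref{D:admissible}(iii), I would pick $y_0^k \in \Z$ with $y_0^k \to p$ from above and $y_1^k \in \Z$ with $y_1^k \to q$ from above. For each fixed $k$, the first task is to verify that for $i \gg 0$ there is a nonzero morphism $E(\{x_0^i, x_1^i\}) \to E(Y^k)$; by Section \ref{sec:IT}(vi) this amounts to the cyclic inequality
\[
x_0^i \leqslant y_0^k \leqslant (x_1^i)^{--} < x_1^i \leqslant y_1^k \leqslant (x_0^i)^{--}.
\]
Because $p \neq q$, the points $y_0^k$ and $y_1^k$ sit in disjoint short arcs near $p$ and $q$, while $x_0^i$ lies just below $p$ and $x_1^i$ lies on the prescribed side of $q$. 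An auxiliary observation needed along the way is that the double predecessors $(x_0^i)^{--}$ and $(x_1^i)^{--}$ converge to $p$ and $q$ from the same sides as the original sequences; this follows because, for $i$ large, infinitely many of the $x_0^j$ (respectively $x_1^j$) lie strictly between $x_0^i$ (respectively $x_1^i$) and the limit point, so the two elements of $\Z$ preceding $x_0^i$ (respectively $x_1^i$) must lie in the same arc.

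Once the morphism exists, Lemma \ref{L:precovering condition archaic} supplies finitely many precover diagonals $\{a_0^{k,l}, a_1^{k,l}\} \in \X$, $l = 1, \dots, L_k$, such that every $\{x_0^i, x_1^i\}$ which maps nontrivially to $E(Y^k)$ satisfies $x_0^i \leqslant a_0^{k,l} \leqslant y_0^k$ and $x_1^i \leqslant a_1^{k,l} \leqslant y_1^k$ for some $l$. Pigeonhole over $\{1, \dots, L_k\}$ yields an index $l(k)$ serving infinitely many $i$; I set $a_0^k := a_0^{k, l(k)}$ and $a_1^k := a_1^{k, l(k)}$ and let $i \to \infty$ through that subsequence. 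Then $x_0^i \leqslant a_0^k$ forces $a_0^k \geqslant p$ in the limit, and since $a_0^k \in \Z$ while $p \in L(\Z)$, strict inequality gives $a_0^k \in (p, y_0^k]$; analogously $a_1^k \in (q, y_1^k]$. Squeezing these intervals against $y_0^k \to p$ from above and $y_1^k \to q$ from above yields $a_0^k \to p$ from above and $a_1^k \to q$ from above, which is the sequence required by the conclusion of PC1 (respectively PC2).

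The main obstacle I anticipate is the cyclic-order verification in step two, and specifically the inequality pieces $y_0^k \leqslant (x_1^i)^{--}$ and $x_1^i \leqslant y_1^k$. These are where the two cases diverge: in PC2, where $x_1^i$ and $y_1^k$ both sit just above $q$, one must confirm that the fixed $y_1^k$ is eventually overtaken in the cyclic order by the $x_1^i$ converging to $q$ from above; and the double-predecessor claim for $(x_1^i)^{--}$ needs a separate justification in each case depending on the direction of approach. Everything else is a routine pigeonhole-plus-squeeze.
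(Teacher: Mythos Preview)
Your approach is essentially the paper's: probe with test diagonals $Y^k=\{y_0^k,y_1^k\}$ converging to $(p,q)$ from above, invoke Lemma~\ref{L:precovering condition archaic} to obtain a finite precovering family, pigeonhole to extract one diagonal $\{a_0^k,a_1^k\}\in\X$ squeezed between $(p,q)$ and $(y_0^k,y_1^k)$, then let $k\to\infty$. The paper does exactly this (first for a single $Y$, then varying $Y$ along such a sequence), after first disposing of the trivial case $p,q\in\Z$; you should do likewise, since your clause ``$a_0^k\in\Z$ while $p\in L(\Z)$'' tacitly assumes $p\notin\Z$.

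One genuine slip: your justification of the auxiliary observation that $(x_0^i)^{--}\to p$ from below is backwards. Saying that infinitely many $x_0^j$ lie strictly between $x_0^i$ and $p$ locates elements of $\Z$ on the \emph{successor} side of $x_0^i$ and tells you nothing about its predecessors. The correct argument uses the two-sided limit condition (Definition~\ref{D:admissible}(iii)): for any $\mu$ in the arc below $p$ there are infinitely many $z\in\Z\cap(\mu,p)$; since $x_0^i\to p$, eventually $x_0^i$ lies above at least two of them, whence $(x_0^i)^{--}>\mu$. (The paper sidesteps this entirely by simply passing to a subsequence so that the cyclic chain $x_0^i\leqslant p\leqslant(x_1^i)^{--}<x_1^i\leqslant q\leqslant(x_0^i)^{--}$ holds for every $i$.) With this fix your argument goes through.
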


\begin{proof}
Let $\X$ be a set of diagonals such that $\add E(\X)$ is precovering. We show that $\X$ satisfies condition PC1. The fact that $\X$ satisfies condition PC2 follows by an analogous argument.  Hence let $X^i = \{x^i_0, x^i_1\}_{i \in \ZZ_{\geqslant 0}}$ be a sequence from $\X$ with $x^i_0 \to p$ from below and $x^i_1 \to q$ from below with $p \neq q$. 

	If $p,q \in \Z$, we have $x^i_0 = p$ and $x^i_1 = q$ from some step, whence $\{p,q\} \in \X$ and condition PC1 is clearly satisfied with $x'^j_0 = p$ and $x'^j_1 = q$ for each $j \in \ZZ_{\geqslant 0}$. 
	
	We can thus assume that $p \in L(\Z)$ or $q \in L(\Z)$.

	 Then by passing to a subsequence we may assume
		\[
			x^i_0 \leqslant p \leqslant {x^i_1}^{--} < x^i_1 \leqslant q \leqslant {x^i_0}^{--}
		\]
	for each $i \in \ZZ_{\geqslant 0}$. Let $Y = \{y_0,y_1\}$ be a diagonal of $\Z$ with
		\[
			p \leqslant y_0 \leqslant {x^i_1}^{--} \text{ and } q \leqslant y_1 \leqslant {x^i_0}^{--}
		\]
	for each $i \in \ZZ_{\geqslant 0}$, see Figure \ref{fig:ys}. 
	Note that such diagonals exist; in fact since $\Z$ satisfies the two-sided
	limit condition (see Definition \ref{D:admissible}), we can even find an entire sequence of such diagonals 
	with endpoints converging to $p$ and $q$ (at least one of which lies in $L(\Z)$) from above.

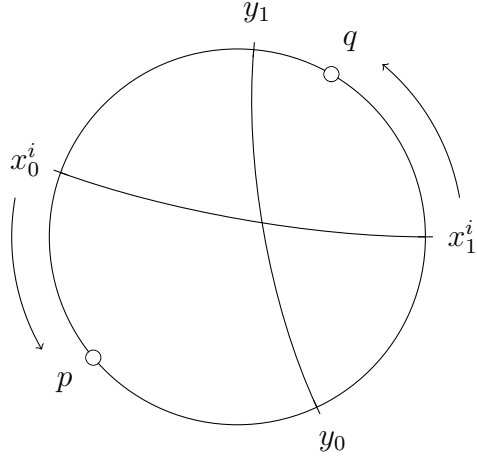
\begin{figure}
	\begin{center}
	\begin{tikzpicture}[scale=5]
        \draw (0,0) circle(0.5cm);
	\draw (60:0.5) node[fill=white,circle,inner sep=0.065cm] {} circle (0.02cm);
	\draw (220:0.5) node[fill=white,circle,inner sep=0.065cm] {} circle (0.02cm);
	\node at (60:0.6){$q$};
	\node at (220:0.6){$p$};

	\node at (85:0.6){$y_1$};
	\draw  (85:0.48) -- (85:0.52);
	\node at (295:0.6){$y_0$};
	\draw  (295:0.48) -- (295:0.52);
    \draw (85:0.5) .. controls (85:0.2) and (295:0.2) .. (295:0.5);	    
	\node at (0:0.6) {$x^i_1$};
	\draw  (0:0.48) -- (0:0.52);
	\node at (160:0.6) {$x^i_0$};
	\draw  (160:0.48) -- (160:0.52);
    \draw (0:0.5) .. controls (0:0.2) and (160:0.2) .. (160:0.5);	    
	\draw[->] (10:0.6) arc (10:50:0.6);
	\draw[->] (170:0.6) arc (170:210:0.6);
	
	\end{tikzpicture}
	\end{center}
\caption{Illustration of the proof of Proposition \ref{P:precovering implies PC1 and PC2}
.} \label{fig:ys}
\end{figure}	
	Then for each $i \in \ZZ_{\geqslant 0}$ we have
		\[
			x^i_0 \leqslant y_0 \leqslant {x^i_1}^{--} \text{ and } x^i_1 \leqslant y_1 \leqslant {x_0^i}^{--}.
		\]
	By assumption, $\add E(\X)$ is a precovering subcategory of $\cC( \cZ )$. So by Lemma \ref{L:precovering condition archaic} there must exist finitely many diagonals $U^j = \{u^j_0, u^j_1\} \in \X$ for $j \in \{1, \ldots, l\}$, such that for each $i \in \ZZ_{\geqslant 0}$ there is a 
	$j \in \{1, \ldots, l\}$ with
		\[
			x^i_0 \leqslant u^j_0 \leqslant y_0 \text{ and } x^i_1 \leqslant u^j_1 \leqslant y_1.
		\]
	There must be a $j \in \{1, \ldots, l\}$ which works for infinitely many values of $i \in \ZZ_{\geqslant 0}$, i.e.\ there is a diagonal $V=\{v_0,v_1\} \in \X$ such that for infinitely many values of
	$i \in \ZZ_{\geqslant 0}$ we have
		\[
			x^i_0 \leqslant v_0 \leqslant y_0 \text{ and } x^i_1 \leqslant v_1 \leqslant y_1. 
		\]
	Since they hold for infinitely many $i \in \ZZ_{\geqslant 0}$, the first of these inequalities forces $p \leqslant v_0 \leqslant y_0$, while the second forces $q \leqslant v_1 \leqslant y_1$. As mentioned above, since $\Z$ satisfies the two-sided limit
	condition, we can pick a sequence of diagonals $Y^j = \{y^j_0,y^j_1\}$ of $\Z$ with $y^j_0 \to p$ from above and $y^j_1 \to q$ from above and such that
		\[
			p \leqslant y^j_0 \leqslant {x^i_1}^{--} \text{ and } q \leqslant y^j_1 \leqslant {x^i_0}^{--}
		\]
	for all $i,j \in \ZZ_{\geqslant 0}$ (note that if $p \in \Z$, respectively $q \in \Z$, we can pick $y^j_0 = p$ for each $j \in \ZZ_{\geqslant 0}$, respectively $y^j_1 = q$ for each $j \in \ZZ_{\geqslant 0}$). Applying the above argument for each of the diagonals $Y^j$ in this sequence, we find a sequence 
	$\{v^j_0,v^j_1\} \in \X$ with $v^j_0\to p$ from above and $v^j_1\to q$ from above. Thus condition PC1 holds. 
\end{proof}

\begin{Remark}
\label{R:special case half fountain}
Either of conditions PC1 and PC2 implies the following condition: Suppose $\X$ has a right fountain at $z \in \Z$ converging to $a \in L(\Z)$, that is, a sequence $\{z,x_i\}_{i\in\ZZ_{\geqslant 0}}$ with $x_i\to a$ from below.  Then $\X$ has a fountain at $z$ converging to $a$.
	
Namely, if condition PC1 holds, then there is a sequence $\{x'^i_0,x'^i_1\} _{i\in\ZZ_{\geqslant 0}}$ from $\X$ with $x'^i_0\to z$ from above and $x'^i_1\to a$ from above. Since $\Z$ is discrete, $x'^i_0=z$ from some step (see Remark \ref{R:dichotomy}), so $\X$ has a left fountain at $z$ converging to $a$. 
	
If condition PC2 holds, the analogous argument works with $z$ in the role of $q$ and $a$ in the role of $p$ in the definition of condition PC2.
\end{Remark}
\medskip

\begin{Definition}
\label{N:some sets}
Let $\X$ be a set of diagonals of $\Z$, let $Y = \{y_0,y_1\}$ be in $\X$, and let $t_0 \in [y_1^{++},y_0] \cap \Z$ and $t_1 \in [y_0^{++} ,y_1] \cap \Z$. We write
\[
  W_0(\X,Y, t_0,t_1)
  = \big\{x_0 \in [y_1^{++}, t_0] \cap \Z \,\big|\, \exists \ \{x_0,x_1\} \in \X \text{ with } x_1 \in [t_1,y_1] \big\}.
\]
For $u_0 \in [y_1^{++}, y_0] \cap \Z$ we write
\[
  W_1(\X,Y,u_0,t_1) = \big\{ x_1 \in [t_1,y_1] \cap \Z \,\big|\, \{u_0,x_1\} \in \X \big\}.
\]
\end{Definition}
\medskip
	
The set $W_0(\X,Y, t_0,t_1)$ consists of the end points in $[y_1^{++}, t_0]$ of diagonals in $\X$ between the two intervals shown in Figure \ref{fig:W0}. The set $W_1(\X,Y,u_0,t_1)$ consists of end points in $[t_1,y_1]$ of diagonals of $\X$ with other end point $u_0$.
\begin{figure}
  \centering
    \begin{tikzpicture}[scale=2.5]
      \draw (0,0) circle (1cm);

      \draw[very thick] ([shift=(-100:1cm)]0,0) arc (-100:55:1cm);      
      \draw[very thick] ([shift=(145:1cm)]0,0) arc (145:235:1cm);            

      \draw (-100:0.97cm) -- (-100:1.03cm);
      \draw (-100:1.13cm) node{$y_1^{++}$};
      \draw (-50:0.97cm) -- (-50:1.03cm);
      \draw (-50:1.13cm) node{$x_0$};
      \draw (-30:0.97cm) -- (-30:1.03cm);
      \draw (-30:1.13cm) node{$x'_0$};
      \draw (-5:0.97cm) -- (-5:1.03cm);
      \draw (-5:1.13cm) node{$x''_0$};
      \draw (25:0.97cm) -- (25:1.03cm);
      \draw (25:1.13cm) node{$s_0$};
      \draw (55:0.97cm) -- (55:1.03cm);
      \draw (55:1.13cm) node{$t_0$};
      \draw (95:0.97cm) -- (95:1.03cm);
      \draw (95:1.13cm) node{$y_0$};      
      \draw (120:0.97cm) -- (120:1.03cm);
      \draw (120:1.13cm) node{$y_0^{++}$};      
      \draw (145:0.97cm) -- (145:1.03cm);
      \draw (145:1.13cm) node{$t_1$};
      \draw (155:0.97cm) -- (155:1.03cm);
      \draw (155:1.13cm) node{$x_1$};
      \draw (170:0.97cm) -- (170:1.03cm);
      \draw (170:1.13cm) node{$x'_1$};
      \draw (188:0.97cm) -- (188:1.03cm);
      \draw (188:1.13cm) node{$x''_1$};
      \draw (210:0.97cm) -- (210:1.03cm);
      \draw (210:1.13cm) node{$s_1$};
      \draw (235:0.97cm) -- (235:1.03cm);
      \draw (235:1.15cm) node{$y_1$};

      \draw (25:1cm) .. controls (25:0.5cm) and (210:0.5cm) .. (210:1cm);
      \draw (-50:1cm) .. controls (-50:0.50cm) and (155:0.50cm) .. (155:1cm);
      \draw (-30:1cm) .. controls (-30:0.50cm) and (170:0.50cm) .. (170:1cm);
      \draw (-5:1cm) .. controls (-5:0.5cm) and (188:0.5cm) .. (188:1cm);
      \draw (95:1cm) .. controls (95:0.3cm) and (235:0.3cm) .. (235:1cm);

    \end{tikzpicture} 
  \caption{Illustration of Definition \ref{N:some sets}.  The set $W_0(\X,Y, t_0,t_1)$ has elements $x_0$, $x_0'$, and $x_0''$ among others, and supremum $s_0$, where $\{x_0,x_1\}$, $\{x'_0,x'_1\}$, $\{x''_0,x''_1\}$, $\{s_0,s_1\}$ are diagonals in $\X$.}
\label{fig:W0}
\end{figure}
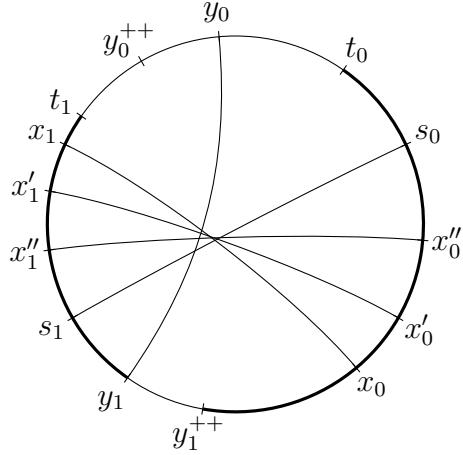

\begin{Lemma}
\label{L:supremum general}
Let $\X$ be a set of diagonals of $\Z$ satisfying conditions PC1 and PC2, let $Y = \{y_0,y_1\}$ be in $\X$, and let $t_0, t_1$ and $u_0$ be as in Definition \ref{N:some sets}.  Then the following holds.
\begin{enumerate}
\setlength\itemsep{4pt}

  \item  {If the set $W_0:=W_0(\X,Y,t_0,t_1)$ is non-empty, then $s_0 := \sup_{[y_1^{++}, t_0]} W_0 \in \Z$.}

  \item  {If the set $W_1:=W_1(\X,Y,u_0,t_1)$ is non-empty, then $s_1 := \sup_{[t_1,y_1]} W_1 \in \Z$.}

\end{enumerate}
\end{Lemma}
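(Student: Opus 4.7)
The plan is to argue both parts by contradiction: assume the supremum lies in $L(\Z)$ and use conditions PC1, PC2, together with Remark \ref{R:special case half fountain}, to produce an element of $W_0$ or $W_1$ that strictly exceeds the supremum. In both parts the supremum is, by the very definition of supremum, approached from below by some increasing sequence inside the set, so the task reduces to upgrading a ``from below'' sequence at $s_0$ (resp.\ $s_1$) into a ``from above'' sequence whose members still land in the prescribed arcs.

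For part (ii) the argument is direct because every diagonal in $W_1$ has first endpoint $u_0$. An increasing sequence $\{x_1^i\}$ in $W_1$ with $x_1^i \to s_1$ from below is exactly a right fountain at $u_0$ converging to $s_1$. Remark \ref{R:special case half fountain} then promotes it to a full fountain, yielding diagonals $\{u_0, y_j\} \in \X$ with $y_j \to s_1$ from above. Since $s_1 \in L(\Z)$ while $t_1, y_1 \in \Z$, we have $s_1 \in (t_1, y_1)$, and therefore eventually $y_j \in (s_1, y_1] \subseteq [t_1, y_1]$. This places $y_j$ in $W_1$ with $y_j > s_1$, contradicting $s_1 = \sup_{[t_1,y_1]} W_1$.

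For part (i) the other endpoint is no longer fixed, which is the main obstacle. Take an increasing sequence $\{x_0^i\}$ in $W_0$ converging to $s_0$ from below and, for each $i$, a witness $x_1^i \in [t_1,y_1]$ with $\{x_0^i, x_1^i\} \in \X$. By Remark \ref{R:dichotomy}, after passing to a subsequence we may assume $x_1^i \to q$ for some $q \in [t_1,y_1]$, and that either $x_1^i$ is eventually constant (when $q \in \Z$) or $x_1^i \to q$ monotonically from a single side (when $q \in L(\Z)$). If $q \in \Z$, then we have a right fountain $\{q, x_0^i\}$ at $q$ converging to $s_0$, and the part (ii) reasoning applied with $q$ in place of $u_0$ produces diagonals in $W_0$ beyond $s_0$. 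If $q \in L(\Z)$, observe that $s_0 \neq q$ since they inhabit the disjoint arcs $[y_1^{++}, t_0]$ and $[t_1, y_1]$, so condition PC1 (if $x_1^i \to q$ from below) or PC2 (if $x_1^i \to q$ from above) applies and yields $\{x'^{\,i}_0, x'^{\,i}_1\} \in \X$ with $x'^{\,i}_0 \to s_0$ from above and $x'^{\,i}_1 \to q$ from above.

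The closing step, which is the finicky part of the proof, is to verify that these promoted sequences really land in the required intervals. Since $s_0 \in L(\Z)$ and $t_0 \in \Z$, the inequality $s_0 < t_0$ is strict, so eventually $x'^{\,i}_0 \in (s_0, t_0) \subseteq [y_1^{++}, t_0]$; similarly $q \in L(\Z)$ and $y_1 \in \Z$ force $q < y_1$ strictly, so eventually $x'^{\,i}_1 \in (q, y_1) \subseteq [t_1, y_1]$. Thus $x'^{\,i}_0 \in W_0$ with $x'^{\,i}_0 > s_0$, contradicting $s_0 = \sup_{[y_1^{++}, t_0]} W_0$ and completing the proof. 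The delicate point throughout is the case split by the side from which $x_1^i$ approaches $q$, making sure that whichever of PC1, PC2, or Remark \ref{R:special case half fountain} we invoke genuinely applies to the configuration at hand.
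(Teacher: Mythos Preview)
Your proof is correct and follows essentially the same approach as the paper's: argue by contradiction, pick an increasing sequence in $W_0$ (resp.\ $W_1$) approaching the supremum from below, and use PC1/PC2 (resp.\ Remark~\ref{R:special case half fountain}) to upgrade to a from-above sequence whose endpoints land in the required intervals, yielding an element strictly above the supremum.

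The one minor difference is that in part~(i) you case-split on whether $q \in \Z$ or $q \in L(\Z)$, invoking Remark~\ref{R:special case half fountain} in the former case. The paper avoids this split by applying PC1/PC2 directly regardless of whether the second limit $\tilde{s}_1$ lies in $\Z$: when $\tilde{s}_1 \in \Z$ the sequence $x_1^i$ is eventually constant, hence converges both from below and from above, so PC1 (or PC2) still applies verbatim. Your split is valid but unnecessary; the uniform treatment is slightly cleaner.
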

	
\begin{proof}
We start by showing (i). Suppose $s_0 \notin \Z$, in particular $s_0 \neq y_1^{++}$ and $s_0 \neq t_0$, so $s_0 \in (y_1^{++}, t_0)$. There is a sequence $\{x^i_0,x^i_1\}$ from $\X$ with $x^i_0 \in [y_1^{++},t_0]$, $x^i_1 \in [t_1,y_1]$ for each $i \in \ZZ_{\geqslant 0}$ and $x^i_0 \to s_0$ from below. Passing to a subsequence we can assume $x^i_1 \to \tilde{s_1}$ from below or above for some $\tilde{s_1} \in [t_1,y_1]$. Note that since
\[
  y_1^{++} < s_0 < t_0 \leqslant y_0 < y_0^{++} \leqslant t_1 \leqslant \tilde{s_1} \leqslant y_1,
\]
we have $s_0 \neq \tilde{s_1}$. So conditions PC1 and PC2 imply that there is a sequence $\{x'^i_0,x'^i_1\}$ from $\X$ with $x'^i_0 \to s_0$ and $x'^i_1 \to \tilde{s_1}$ both from above.  So for some $i \in \ZZ_{\geqslant 0}$ we have 
\[
  s_0 < x'^i_0 \leqslant t_0 \text{ and } \tilde{s_1} \leqslant x'^i_1 \leqslant y_1.
\]
In particular, $x'^i_0 \in [y_1^{++}, t_0]$ and $x'^i_1 \in [t_1,y_1]$ for these $i$, so $x'^i_0 \in W_0$ and the first of the above inequalities violates the definition of $s_0$ as a supremum.
		
We now show (ii). Suppose $s_1 \notin \Z$, in particular $s_1\neq t_1$ and $s_1 \neq y_1$, so $s_1 \in (t_1,y_1)$.  There is a sequence $\{u_0,x^i\}$ from $\X$ with $x^i \in [t_1,y_1]$ for each $i \in \ZZ_{\geqslant 0}$ and $x^i \to s_1$ from below. By condition PC1 (or PC2) and Remark \ref{R:special case half fountain} there is a sequence $\{u_0,x'^i\}$ from $\X$ with $x'^i \to s_1$ from above. However, then we obtain $s_1 < x'^i \leqslant y_1$ from some step, violating the definition of $s_1$ as a supremum.
\end{proof}

We can now prove Theorem \ref{T:precovering}.

\begin{proof}
If $\add E(\X)$ is precovering, then $\X$ satisfies conditions PC1 and PC2 by Proposition \ref{P:precovering implies PC1 and PC2}.

Conversely, assume that $\X$ satisfies conditions PC1 and PC2. Let $Y = \{y_0,y_1\}$ be an arbitrary diagonal of $\Z$.  According to Lemma \ref{L:precovering condition archaic} we have to show that $Y$ satisfies the following condition:
		\begin{itemize}
		\setlength\itemsep{4pt}
		\item[($\ast$)]{ There exists a finite set of diagonals $S=\{X^1, \ldots, X^l\} \subseteq \X$, such that for each
		diagonal $X=\{x_0,x_1\} \in \X$ with 
			$x_0 \leqslant y_0 \leqslant x_1^{--}$ and $x_1 \leqslant y_1 \leqslant x_0^{--}$
		there is an $i \in \{1, \ldots, l\}$ with $X^i = \{x^i_0,x^i_1\}$ and 
			$x_0 \leqslant x^i_0 \leqslant y_0$ and $x_1 \leqslant x^i_1 \leqslant y_1.$}
		\end{itemize}
		
		We are going to construct inductively a sequence $S$ of diagonals from $\X$, see Figure \ref{fig:ss}.
		
\begin{figure}
		\begin{center}
	\begin{tikzpicture}[scale=5,cap=round,>=latex]
        \draw (0,0) circle(0.5cm);
	\node at (350:0.6cm){$y_0$};
	\draw (350:0.48) -- (350:0.52);
	\node at (5:0.8cm){$y_0^+ = s_0^0=s_1^0$};
	\draw (5:0.48) -- (5:0.52);
	\node at (20:0.64cm) {$y_0^{++}$};
	\draw (20:0.48) -- (20:0.52);
	\node at (35:0.6) {$s_1^1$};
	\draw (35:0.48) -- (35:0.52);
	\node at (50:0.6) {$s_1^2$};
	\draw (50:0.48) -- (50:0.52);
	\node at (70:0.6) {$s_1^3$};
	\draw (70:0.48) -- (70:0.52);
	\draw[thick,dotted] (80:0.58) arc (80:90:0.58);
	\node at (140:0.6) {$y_1$};
	\draw (140:0.48) -- (140:0.52);
	\node at (160:0.6) {$y_1^+$};
	\draw (160:0.48) -- (160:0.52);
	\node at (180:0.6) {$y_1^{++}$};
	\draw (180:0.48) -- (180:0.52);
	\draw[thick,dotted] (260:0.58) arc (260:270:0.58);
	\node at (280:0.6) {$s_0^3$};
	\draw (280:0.48) -- (280:0.52);
	\node at (310:0.6) {$s_0^2$};
	\draw (310:0.48) -- (310:0.52);
	\node at (340:0.6) {$s_0^1$};
	\draw (340:0.48) -- (340:0.52);
	
    \draw (140:0.5) .. controls (140:0.2) and (350:0.2) .. (350:0.5);	    
    \draw (35:0.5) .. controls (15:0.35) and (355:0.35) .. (340:0.5);	    
    \draw (50:0.5) .. controls (50:0.25) and (310:0.25) .. (310:0.5);	    
    \draw (70:0.5) to[bend right] (280:0.5);

	\draw[thick,dotted] (-0.15,0) -- (-0.05,0);
       
      \end{tikzpicture}
      \end{center}
  \caption{Illustration of the proof of Theorem \ref{T:precovering}.}
\label{fig:ss}
\end{figure}
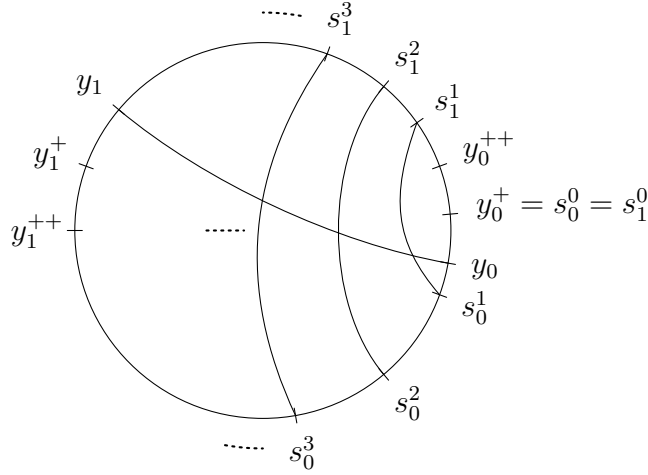

		Set $s^0_0 = s^0_1 = y_0^+$. For $l \geqslant 1$, if
			\[
				y_1^{++} \leqslant s_0^{l-1} \leqslant y_0^+ \text{ and } y_0^+ \leqslant s^{l-1}_1 \leqslant y_1
			\]
		have already been defined, then we proceed as follows: 
			\begin{itemize}
			\setlength\itemsep{4pt}
				\item
					{If $s^{l-1}_0 = y_1^{++}$ or $s^{l-1}_1 = y_1$, then we terminate.} (Note that for $l=1$ 
					this can not happen since $\{y_0,y_1\}$ is a diagonal, i.e. $y_0$ and $y_1$ are not neighbouring 
					vertices of $\Z$.)
				\item
					{If $s^{l-1}_0 \neq y_1^{++}$ and $s^{l-1}_1 \neq y_1$, then 
						\[
							y_1^{++} \leqslant (s^{l-1}_0)^- \leqslant y_0 \text{ and } y_0^{++} \leqslant (s^{l-1}_1)^+ \leqslant y_1
						\]
					and we set $t_0 = (s^{l-1}_0)^-$, $t_1 = (s^{l-1}_1)^+$. 
					
					If $W_0(\X,Y,t_0,t_1) = \varnothing$ then we terminate. (Note that if this happens for $l=1$
					then there are no relevant diagonals $X$ as in condition ($\ast$), thus ($\ast$) is trivially
					satisfied.)
					
					If $W_0(\X,Y,t_0,t_1) \neq \varnothing$
					then we set 
						\begin{eqnarray} \label{Eqn:sl0}
							s^l_0 = \sup_{[y_1^{++}, t_0]} W_0(\X,Y,t_0,t_1).
						\end{eqnarray}
					This supremum lies in $\Z$ by Lemma \ref{L:supremum general}.  We then set $u_0 = s^l_0$ and consider the set
					$W_1(\X,Y,u_0,t_1)$.  It is non-empty since $W_0(\X,Y,t_0,t_1) \neq \varnothing$, and we set 
						\[
							s^l_1 = \sup_{[t_1,y_1]} W_1(\X,Y,u_0,t_1).
						\] 
					}
			\end{itemize}
      
		Note that by construction we have
				\begin{eqnarray}\label{Eqn:s^i_0}
					y_1^{++}  \leqslant \ldots < s^3_0 < s^2_0 < s^1_0 \leqslant y_0,
				\end{eqnarray}
				\begin{eqnarray}\label{Eqn:s^i_1}
					y_0^{++} \leqslant s^1_1 < s^2_1 < s^3_1 < \ldots \leqslant y_1,
				\end{eqnarray}
		and $\{s^l_0,s^l_1\} \in \X$ by Lemma \ref{L:supremum general}
		for all $l \geqslant 0$ that are defined. 
				
		We now show that 
		our construction terminates after finitely many steps for each diagonal $Y$ of $\Z$. 
		Suppose by contradiction that for some diagonal $Y = \{y_0,y_1\}$ of $\Z$, our construction does not terminate.
		Then by the inequalities (\ref{Eqn:s^i_0}) and (\ref{Eqn:s^i_1}) there must exist $a \in (y_1^{++}, y_0) \cap L(\Z)$ and $b \in (y_0^{++}, y_1) \cap L(\Z)$ such that $s^l_0 \to a$ from above and $s^l_1 \to b$ from
		below. By condition PC2 for $\X$, there is a sequence $\{s'^m_0,s'^m_1\}$ from $\X$ 
		such that $s'^m_0 \to a$ from above and $s'^m_1 \to b$ from above. 
		Moreover, there exist $m,l \in \ZZ_{\geqslant 0}$ such that
		$s^l_0 < s'^m_0 \leqslant s^{l-1}_0$ and $b < s'^m_1 < y_1$. If we have ${s'}^m_0 = s^{l-1}_0$ then $\{s_0^{l-1},{s'}_1^m\} \in \X$ contradicts the definition of $s_1^{l-1}$ as a supremum. Else, since we now have $(s^{l-1}_1)^+ < b < s'^m_1 < y_1$, the diagonal $\{s'^m_0,s'^m_1\} \in \X$ violates the definition of $s^l_0$ as a supremum. 
				
		So we have shown that our construction terminates after finitely many steps. By the above remarks on
		the case $l=1$ (i.e. that if the construction terminates without defining $s_0^1$ and $s_1^1$ then condition
		($\ast$) is trivially satisfied) we can assume that the construction provides a non-empty finite set
\[
  S = \big\{\{s^l_0, s^l_1\} \,\big|\, 1 \leqslant l \leqslant N \big\}
\]
of diagonals from $\X$, for some $N\in\mathbb{N}$. 
		 
		We now finally show that the set $S$ has the desired 
		property from condition ($\ast$). Let $X=\{x_0,x_1\}\in \X$ with $x_0\leqslant y_0\leqslant x_1^{--}$
		and $x_1\leqslant y_1\leqslant x_0^{--}$, i.e. $x_0\in [y_1^{++},y_0]$ and $x_1\in [y_0^{++},y_1]$.
		
		We distinguish two cases. 
		Assume first that there is an $l \geqslant 1$ such that $s^l_0 < x_0 \leqslant s^{l-1}_0$. Note that then
		$l \geqslant 2$ since for $l=1$ this would violate the definition of $s_0^1$ as supremum.
		Recall from equation (\ref{Eqn:sl0}) that
			$$
				s^l_0 = \sup_{[y_1^{++},(s^{l-1}_0)^-]} W_{0}(\X,Y,(s^{l-1}_0)^-,(s^{l-1}_1)^+),
			$$
		so $s^l_0 < x_0 \leqslant s^{l-1}_0$ implies that there is no diagonal $\{x_0,v_1\} \in \X$ with $v_1 \in [(s^{l-1}_1)^+,y_1]$. That is, we must have $y_0^{++} \leqslant x_1 \leqslant s^{l-1}_1$.
		We get that $x_0 \leqslant s^{l-1}_0 \leqslant y_0$ and $x_1 \leqslant s^{l-1}_1 \leqslant y_1$, so we are done in this case.
		
		Assume now that there is no $l \geqslant 1$ such that $s^l_0 < x_0 \leqslant s^{l-1}_0$. This means that 
		$x_0\in [y_1^{++},s_0^N]$. Since $s_0^{N+1}$ has not been defined in our construction and by the 
		choice of $s_1^N$ as supremum we must have $x_1\in [y_0^{++},s_1^N]$. In other words, 
		$x_0\leqslant s_0^N\leqslant y_0$ and $x_1\leqslant s_1^N\leqslant y_1$, and hence condition ($\ast$) is also satisfied in this case. 
	\end{proof}

\section{Torsion pairs in the cluster categories $\CC(\Z)$}
\label{sec:torsion_pairs}

This section proves Theorem \ref{thm:A} from the introduction (=Theorem \ref{T:torsion pair}).  To set the scene, recall the definition of torsion pairs in triangulated categories, due to Iyama and Yoshino \cite[def.\ 2.2]{IY:mutation}, following the lead of Dickson \cite[p.\ 224]{Dickson:torsiontheory} from the abelian case.

\begin{Definition}
[Torsion pairs in triangulated categories]
\label{def:torsion_pairs}
Let $\T$ be a triangulated category with suspension functor $\Sigma$. A pair $(X,Y)$ of full subcategories of $\T$ is called a {\em torsion pair} if it satisfies the following two axioms.
	
\begin{VarDescription}{(T2)\quad}
\setlength\itemsep{4pt}

			\item[(T1)]{$\Hom_\T(x,y) = 0$ for all $x \in X$, $y \in Y$.}

			\item[(T2)]{For each $t \in \T$ there exist $x \in X$ and $y \in Y$ and a distinguished triangle
						\[
							x \to t \to y \to \Sigma x.
						\]}
\end{VarDescription}
\end{Definition}
\medskip

\begin{Lemma}\label{L:set U}
	Let $\X$ be a set of diagonals of $\Z$ satisfying condition PC1 or condition PC2 and 
	let $s,t \in \Z$. If the set
		\[
			U([s,t]) = \{z \in [s,t] \cap \Z \mid \{s,z\} \in \X \}
		\]
	is non-empty then its supremum $u = \sup_{[s,t]} U([s,t])$
	lies in $\Z$.
\end{Lemma}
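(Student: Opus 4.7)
The plan is to argue by contradiction: assume $u\in L(\Z)$ and combine the definition of supremum with PC1 (or PC2) to produce an element of $U([s,t])$ strictly larger than $u$, violating $u=\sup_{[s,t]}U([s,t])$.

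First I would collect the elementary constraints on $u$. Every element $z\in U([s,t])$ must differ from $s$ (since $\{s,s\}$ is not a diagonal), so in fact $z\in(s,t]\cap\Z$ and hence $u\geqslant z>s$. Also $u\leqslant t$, and since $t\in\Z$ while we are assuming $u\in L(\Z)\subseteq S^1\setminus\Z$, this forces $u<t$. By the definition of supremum there is an increasing sequence $z_i\in U([s,t])$ with $z_i\to u$ from below, giving a sequence of diagonals $\{s,z_i\}\in\X$.

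Next I would invoke the hypothesis. Because $s\in\Z$ is isolated, the constant sequence $s$ converges to $s$ both from below and from above in the sense of Definition~\ref{def:convergence_from_below_and_above}. If PC1 holds, take $x_0^i=s$, $x_1^i=z_i$, $p=s$, $q=u$: then $x_0^i\to p$ from below, $x_1^i\to q$ from below, and $p\neq q$, so PC1 provides $\{x'^i_0,x'^i_1\}\in\X$ with $x'^i_0\to s$ from above and $x'^i_1\to u$ from above. If only PC2 holds, apply it to the reordered sequence with $x_0^i=z_i$, $x_1^i=s$, $p=u$, $q=s$: now $x_0^i\to p$ from below and $x_1^i\to q$ from above, so PC2 provides $\{x'^i_0,x'^i_1\}\in\X$ with $x'^i_0\to u$ from above and $x'^i_1\to s$ from above. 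In either case, by Remark~\ref{R:dichotomy} the coordinate tending to $s$ is eventually equal to $s$, so we have produced diagonals $\{s,w_i\}\in\X$ with $w_i\to u$ from above.

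Finally I would extract the contradiction. Since $u\notin\Z$ we have $w_i\neq u$, hence $w_i>u$; and since $u<t$ together with $w_i\to u$ from above, eventually $w_i\in(u,t)\cap\Z$. Thus $w_i\in U([s,t])$ with $w_i>u$, contradicting the definition of $u$. The only subtle point in the argument is the bookkeeping needed to apply whichever of PC1 or PC2 is available: the key observation is that a constant sequence at an isolated point of $\Z$ simultaneously satisfies ``convergence from below'' and ``convergence from above'', which is exactly what allows either hypothesis to be triggered when one endpoint of the diagonals is held fixed at $s$.
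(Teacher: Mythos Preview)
Your proof is correct and follows essentially the same route as the paper's: assume $u\notin\Z$, use the definition of supremum to obtain a right fountain at $s$ converging to $u$, apply PC1 or PC2 to upgrade it to a left fountain, and contradict maximality of $u$. The only difference is cosmetic: the paper packages the ``right fountain $\Rightarrow$ fountain'' step into Remark~\ref{R:special case half fountain} and cites it, whereas you unfold that argument inline.
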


\begin{proof}
	Assume by contradiction that the supremum $u$ does not lie in $\Z$. Then there is a sequence $\{s,z^i\}_{i \in \ZZ_{\geqslant 0}}$ from $\X$ with $z^i \to u$ from below. Since $\X$ satisfies condition PC1 or condition PC2, by Remark \ref{R:special case half fountain} there is a sequence $\{s,z'^i\}_{i \in \ZZ_{\geqslant 0}}$ from $\X$ with $z'^i \to u$ from above. Since 
	$u \notin \Z$ we have $u \neq t$ and thus $u < z'^i < t$ for some $i \in \ZZ_{\geqslant 0}$. Then $\{s,z'^i\} \in \X$ violates the definition of $u$ as a supremum.
\end{proof}

\begin{Lemma}
\label{L:16}
Let $\X$ be a set of diagonals of $\Z$ satisfying conditions PC1 and PC2 and the Ptolemy condition.  Let $s\in \Z$ and $v \in L(\Z)$ be given, and assume that there exists $t\in (s,v) \cap \Z$ such that the following condition is satisfied, see Figure \ref{fig:L:16}:
\begin{equation}
\label{eqn:s<t<v}
  \text{For each $w \in (t,v) \cap \Z$ there exists a diagonal $\{p,q\} \in \X$ with} \; s < p < w < q < v.
\end{equation}
Then for each $w \in (t,v) \cap \Z$ there exists a diagonal $\{p',q'\} \in \X$ with
\[
  s < p' \leqslant t < w < q' < v.
\]
\end{Lemma}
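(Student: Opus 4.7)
I plan to argue by contradiction, introducing an auxiliary set of ``good'' right endpoints and combining the Ptolemy condition with the precovering data PC1, PC2 to force the associated supremum into the range $(w,v)$.

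\textbf{Initial diagonal and reformulation.} The hypothesis (\ref{eqn:s<t<v}) requires $(t,v) \cap \Z$ to be non-empty, so the $\Z$-successor $t^+$ of $t$ lies in $(t,v) \cap \Z$. Applying (\ref{eqn:s<t<v}) with $t^+$ in place of $w$ yields $\{p_0,q_0\} \in \X$ with $s<p_0<t^+<q_0<v$; since $p_0 \in \Z$ and $\Z \cap (t,t^+) = \varnothing$, we get $p_0 \leqslant t$. Hence the set
\[
B = \big\{ q \in (t,v) \cap \Z \,\big|\, \exists\, p \in (s,t] \cap \Z \text{ with } \{p,q\}\in\X \big\}
\]
is non-empty (it contains $q_0$), and the conclusion of the lemma for the given $w$ is equivalent to $B \cap (w,v) \neq \varnothing$. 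I will assume for contradiction that $B \subseteq (t,w]$ and set $\beta = \sup_{[t,v]} B \in (t,w]$.

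\textbf{Proving $\beta \in \Z$.} This step mimics Lemma \ref{L:supremum general}. Choose $q_n \in B$ increasing to $\beta$ together with witnesses $p_n \in (s,t] \cap \Z$ with $\{p_n,q_n\}\in\X$. Passing to a subsequence, Remark \ref{R:dichotomy} leaves two alternatives. In case (a) the $p_n$ are eventually constant $= p^\ast$, so $\{p^\ast,q_n\}$ forms a right fountain at $p^\ast$ converging to $\beta$; Remark \ref{R:special case half fountain} then supplies a left fountain $\{p^\ast,y_j\}$ with $y_j \to \beta$ from above. In case (b) the $p_n$ converge monotonically to some $p^{(\infty)} \in L(\Z)$, and since $t \in \Z$ we have $p^{(\infty)}<t$; applying whichever of PC1 or PC2 matches the directions of approach of $p_n$ and $q_n$ to the sequence $\{p_n,q_n\}$ yields $\{a^n,b^n\} \in \X$ with $a^n \to p^{(\infty)}$ from above and $b^n \to \beta$ from above. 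Suppose now towards a contradiction that $\beta \in L(\Z)$; since $w \in \Z$ this forces $\beta<w$. Then for sufficiently large index the right endpoints $b^n$ or $y_j$ land in $(\beta,w) \cap \Z$, while the corresponding left endpoint is either $p^\ast \in (s,t] \cap \Z$ or $a^n \in (p^{(\infty)},t) \cap \Z \subseteq (s,t] \cap \Z$. Either output witnesses an element of $B$ strictly greater than $\beta$, contradicting the supremum. Hence $\beta \in \Z$.

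\textbf{Final step via the Ptolemy condition.} Since $q_n \to \beta$ with all entries in $\Z$, discreteness forces $q_n=\beta$ eventually, so $\beta \in B$; fix $p^\ast \in (s,t] \cap \Z$ with $\{p^\ast,\beta\} \in \X$. Apply (\ref{eqn:s<t<v}) with $w'=\beta$ when $\beta<w$, and with $w'=w$ when $\beta=w$, obtaining $\{\hat p,\hat q\} \in \X$ with $s<\hat p<w'<\hat q<v$. If $\hat p \leqslant t$ then $\hat q \in B$: either $\hat q>w$ (contradicting $B \subseteq (t,w]$) or $\hat q \leqslant w$ with $\hat q>\beta$ (contradicting $\beta=\sup B$). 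If $\hat p>t$, then the ordering $p^\ast \leqslant t<\hat p<\beta \leqslant w'<\hat q$ shows that $\{p^\ast,\beta\}$ and $\{\hat p,\hat q\}$ cross, so the Ptolemy condition supplies $\{p^\ast,\hat q\} \in \X$, and the same dichotomy on $\hat q$ versus $w$ produces the final contradiction.

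\textbf{Anticipated main obstacle.} The delicate step is establishing $\beta \in \Z$ in case (b). One has to label the endpoints of $\{p_n,q_n\}$ so that exactly one of PC1 (both convergences from below) or PC2 (one from below and one from above) applies, and then exploit the strict inequalities $p^{(\infty)}<t$ and $\beta<w$, both arising from $t,w \in \Z$ being isolated from their respective limit points, to guarantee that the PC-output left endpoints actually fall inside $(s,t]$ rather than merely lying close to $p^{(\infty)}$.
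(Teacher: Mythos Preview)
Your proof is correct. It follows essentially the same strategy as the paper's, but with a dual setup: the paper defines $V$ as the set of ``bad'' points $w$ and studies $\tilde w = \inf_{[t,v]} V$, whereas you fix a bad $w$, define $B$ as the set of right endpoints $q$ admitting a witness $p \in (s,t]$, and study $\beta = \sup_{[t,v]} B$. Since $w$ is bad precisely when $B \cap (w,v) = \varnothing$, i.e.\ when $w \geqslant \sup B$, one finds $\tilde w = \beta$, so both proofs pivot on the same critical point and then split into the cases $\in \Z$ (handled via the Ptolemy condition) and $\in L(\Z)$ (handled via PC1/PC2). Your $B$-formulation buys a small simplification in the $\beta \in \Z$ case: membership $\beta \in B$ immediately supplies the diagonal $\{p^\ast,\beta\} \in \X$, whereas the paper must argue through $\tilde w^-$ to locate a diagonal landing at $\tilde w$. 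One cosmetic point: in the paragraph ``Proving $\beta \in \Z$'' you describe the right fountain and invoke Remark~\ref{R:special case half fountain} in case~(a) before stating the contradiction hypothesis $\beta \in L(\Z)$, yet both the definition of a fountain and that remark require the target to lie in $L(\Z)$; reorder so that the assumption precedes the case analysis.
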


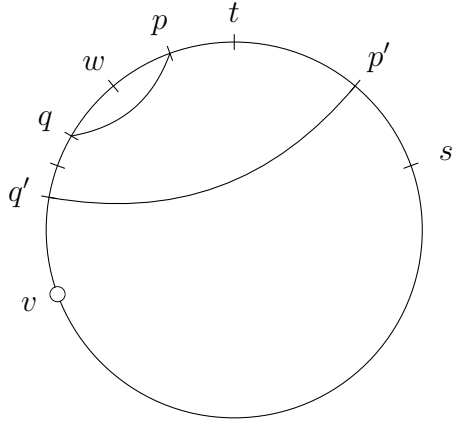
\begin{figure}
\centering{
\begin{tikzpicture}[scale=5,cap=round,>=latex]
\draw (0,0) circle(0.5cm);
	\node (d) at (20:0.6cm){$s$};
	\draw (20:0.48) -- (20:0.52);
	\draw (200:0.5) node[fill=white,circle,inner sep=0.065cm] {} circle (0.02cm);	
	\node (d) at (200:0.58cm){$v$};
	\node (d) at (50:0.60cm){$p'$};
	\draw (50:0.48) -- (50:0.52);
	\node (d) at (90:0.58cm){$t$};
	\draw (90:0.48) -- (90:0.52);
	\node (d) at (110:0.58cm){$p$};
	\draw (110:0.48) -- (110:0.52);
	\node (d) at (130:0.58cm){$w$};
	\draw (130:0.48) -- (130:0.52);
	\node (d) at (150:0.58cm){$q$};
	\draw (150:0.48) -- (150:0.52);
	\node (d) at (170:0.58cm){$q'$};
	\draw (170:0.48) -- (170:0.52);

	\draw (160:0.48) -- (160:0.52);
	\draw (50:0.5) to[bend left] (170:0.5);
	\draw (110:0.5) to[bend left] (150:0.5);
        	
\end{tikzpicture}
\caption{Illustration of Lemma \ref{L:16}.} \label{fig:L:16}
}
\end{figure}

\begin{proof}
Consider the set
\[
  V = \big\{ w \in (t,v) \cap \Z \,\big|\, \nexists \{p',q'\} \in \X \text{ with } s < p' \leqslant t < w < q' < v \big\}
\]
and suppose that $V \neq \varnothing$, setting $\tilde{w} = \inf_{[t,v]} V$. We aim for a contradiction.
	
	Assume first that $\tilde{w} \in \Z$. In particular, this implies $\tilde{w} \in V$. By condition (\ref{eqn:s<t<v}) there exists a diagonal $\{p,q\} \in \X$ with 
		\[
			s < p < \tilde{w} < q < v.
		\]
	Since $\tilde{w} \in V$ we must have
		\begin{equation}\label{Inequality 1}
			s < t < p < \tilde{w} < q < v.
		\end{equation}
	Therefore $\tilde{w}$ lies in $(t^{+},v)$ and thus $\tilde{w}^- \in (t,v)$. Now, because $\tilde{w}$ is the infimum of $V$, we have $\tilde{w}^- \notin V$ and thus we can find $\{p',q'\} \in \X$ with $s < p' \leqslant t < \tilde{w}^- < q' < v$. 
	
	This implies
	\begin{equation}\label{Inequality 2}
		s < p' \leqslant t < \tilde{w} \leqslant q' < v
	\end{equation}
	and because $\tilde{w} \in V$
	we must have $q' = \tilde{w}$. 
	Combining (\ref{Inequality 1}) and (\ref{Inequality 2}) yields
		\[
			s <  p' \leqslant t < p < \tilde{w} = q' < q <v,
		\]
	implying that $\{p',q'\} \in \X$ and $\{p,q\} \in \X$ cross.
	The Ptolemy condition implies that the diagonal
	$\{p',q\}$ is in $\X$ and we have
		\[
			s < p' \leqslant t < \tilde{w} < q < v.
		\]
	This contradicts $\tilde{w} \in V$.
	
	Assume now that $\tilde{w} \in L(\Z)$. We can pick a sequence $\{w^i\}_{i \in \mathbb{Z}_{\geqslant 0}}$ 
	from $V$ converging to $\tilde{w}$ from above. 
	Since $\Z$ satisfies the two-sided limit condition (cf. Definition \ref{D:admissible}), 
	we can pick a sequence $\{z^i\}_{i \in \mathbb{Z}_{\geqslant 0}}$ from $(t, \tilde{w})\cap \Z$ 
	converging to $\tilde{w}$ from below.
	
	Because $\tilde{w}$ is the infimum of $V$, we have $z^i \notin V$ for each $i \in \ZZ_{\geqslant 0}$. Thus for each $i \in \ZZ_{\geqslant 0}$ there is a diagonal $\{x^i_0, x^i_1\} \in \X$ with 
		\[
			s < x^i_0 \leqslant t < z^i < x^i_1 < v.
		\]
	The last inequality can even be written $x^i_1 < \tilde{w} < v$ for each $i \in \ZZ_{\geqslant 0}$: 
	If we had $\tilde{w} < x^i_1 < v$ for an $i \in \ZZ_{\geqslant 0}$ there would be a $j \in \ZZ_{\geqslant0}$ 
	(in fact, infinitely many) with $\tilde{w} < w^j < x_1^i$ which would yield
	\[
		s < x_0^i \leqslant t < \tilde{w} < w^j < x_1^i < v
	\]
	contradicting the fact that $w^j \in V$.
		
	Having $z^i < x^i_1 < \tilde{w}$ for each $i \in \ZZ_{\geqslant 0}$ and $z^i \to \tilde{w}$ from below 
	forces $x^i_1 \to \tilde{w}$ from below. 
	We have $x^i_0 \in [s^+,t]$ and passing to a subsequence we can assume $x^i_0 \to c$ from below or above for some 
	$c \in [s^+,t] \cap \overline{\Z}$. Since $\tilde{w} \in (t,v) \cap L(\Z)$ we have $c \neq \tilde{w}$. 
	By assumption, the set $\X$ satisfies conditions PC1 and PC2 and thus  
	there is a sequence $\{x'^i_0,x'^i_1\}_{i \in \ZZ_{\geqslant 0}}$ of diagonals from $\X$ with $x'^i_0 \to c$ from above 
	and $x'^i_1 \to \tilde{w}$ from above. We can pick $i,j \in \mathbb{Z}_{\geqslant 0}$ such that
	\[
		s < x'^i_0 \leqslant t < \tilde{w} < w^j < x'^i_1<v
	\]
	contradicting the fact that $w^j \in V$.
	\end{proof}

\begin{Definition}
\label{def:nc}
Let $\X$ be a set of diagonals of $\Z$. Then we set
\[
  \nc \X = \{Y \text{ diagonal of } \Z \mid Y \text{ crosses no } X \in \X\}.
\]
We write $\nc^2 \X = \nc (\nc \X)$.  The letters ``nc'' stand for ``non-crossing''.
\end{Definition}
\medskip

\begin{Lemma}\label{L:nc^2 = id implies Ptolemy}
	Let $\X$ be a set of diagonals of $\Z$. If $\nc^2 \X = \X$, then $\X$ satisfies the Ptolemy condition.
\end{Lemma}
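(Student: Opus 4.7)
The strategy is to exploit the hypothesis $\nc^2\X=\X$ directly: to prove that a diagonal $Z$ lies in $\X$, it suffices to show $Z \in \nc(\nc\X)$, i.e.\ that no element of $\nc\X$ crosses $Z$. Take crossing diagonals $\{x_0,x_1\},\{y_0,y_1\} \in \X$, which we may label so that the cyclic order is $x_0 < y_0 < x_1 < y_1$.  Let $Z$ be one of the four Ptolemy pairs $\{x_0,y_0\}$, $\{y_0,x_1\}$, $\{x_1,y_1\}$, $\{y_1,x_0\}$, and assume $Z$ is a diagonal of $\Z$ (otherwise there is nothing to check). Suppose for contradiction that some $W \in \nc\X$ crosses $Z$.

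The main step is the following geometric claim: \emph{any diagonal $W$ of $\Z$ crossing $Z$ must also cross $\{x_0,x_1\}$ or $\{y_0,y_1\}$.} By the symmetry of the configuration under cyclic relabelling of $x_0, y_0, x_1, y_1$, it is enough to verify this for $Z = \{x_0,y_0\}$. If $W = \{w_0,w_1\}$ crosses $Z$, then one endpoint, say $w_0$, lies in the open arc from $x_0$ to $y_0$ not containing $x_1, y_1$, while the other endpoint $w_1$ lies in the open arc from $y_0$ to $x_0$ containing $x_1, y_1$. Splitting the position of $w_1$ into the five subcases $w_1 \in (y_0,x_1)$, $w_1 = x_1$, $w_1 \in (x_1,y_1)$, $w_1 = y_1$, $w_1 \in (y_1,x_0)$, one checks by directly reading off the cyclic order of the relevant four points that $W$ crosses $\{y_0,y_1\}$ in the first two subcases and crosses $\{x_0,x_1\}$ in the last three. (In the boundary cases $w_1 = x_1$ or $w_1 = y_1$, the shared endpoint prevents a crossing with one of the two reference diagonals, but a crossing with the other still occurs.)

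With the geometric claim in hand, the assumed $W \in \nc\X$ crosses an element of $\X$, which is impossible. Hence no element of $\nc\X$ crosses $Z$, and so $Z \in \nc(\nc\X) = \X$. This is exactly the Ptolemy condition. I do not expect any genuine obstacle: the entire proof reduces to the finite case analysis sketched above, which is routine once one pictures the quadrilateral on the vertices $x_0, y_0, x_1, y_1$ with its two crossing diagonals.
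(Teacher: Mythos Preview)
Your proof is correct and follows essentially the same approach as the paper: both argue that any diagonal crossing one of the four Ptolemy pairs must cross $\{x_0,x_1\}$ or $\{y_0,y_1\}$, hence cannot lie in $\nc\X$, so the Ptolemy pairs lie in $\nc^2\X=\X$. Your explicit five-subcase analysis simply spells out what the paper calls ``clearly'' true.
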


\begin{proof}
Assume $\{x_0,x_1\} \in \X$ and $\{y_0,y_1\} \in \X$ cross. According to Definition \ref{def:diagonals} this means that we can label the vertices so that $x_0<y_0<x_1<y_1$. Consider those of $\{x_0,y_0\}$, $\{y_0,x_1\}$, $\{x_1,y_1\}$ and $\{y_1,x_0\}$ which are diagonals of $\Z$.  Clearly, any diagonal $U$ of $\Z$ crossing one of these diagonals must also cross one of $\{x_0,x_1\} \in \X$ and $\{y_0,y_1\} \in \X$, i.e. $U\not\in \nc\X$. It follows that those of $\{x_0,y_0\}$, $\{y_0,x_1\}$, $\{x_1,y_1\}$ and $\{y_1,x_0\}$ which are diagonals of $\Z$ lie in $\nc^2 \X$. But by assumption $\nc^2 \X = \X$, so $\X$ satisfies the Ptolemy condition.
\end{proof}

\begin{Lemma}\label{L:Ptolemy implies nc^2 = id}
	Let $\X$ be a set of diagonals of $\Z$ satisfying conditions PC1 and PC2.  
	If $\X$ satisfies the Ptolemy condition, then $\nc^2 \X = \X$.
\end{Lemma}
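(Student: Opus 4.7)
The easy direction $\X \subseteq \nc^2 \X$ is immediate from the definition: every $X \in \X$ fails to cross any diagonal in $\nc \X$, hence lies in $\nc^2 \X$. This uses neither the Ptolemy condition nor PC1/PC2.

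For the reverse inclusion $\nc^2 \X \subseteq \X$, I argue contrapositively. Let $Y = \{y_0,y_1\}$ be a diagonal with $Y \notin \X$; the plan is to exhibit a diagonal $Z = \{z_0,z_1\}$ that crosses $Y$ and lies in $\nc \X$, which will contradict $Y \in \nc^2 \X$. The candidate $Z$ is chosen to be as ``deep inside'' $Y$ as the presence of $\X$-diagonals will allow, so that any $X \in \X$ crossing $Z$ will, via the Ptolemy condition, yield a new diagonal contradicting the choice of $Z$ or equal to $Y$.

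Concretely, define $P = \{p \in (y_0,y_1) \cap \Z \mid \{y_0,p\} \in \X\}$. If $P \neq \varnothing$, let $z_0 = \sup_{[y_0,y_1]} P$, which lies in $\Z$ by Lemma \ref{L:set U} (and by discreteness of $\Z$ we have $\{y_0,z_0\} \in \X$); if $P = \varnothing$, set $z_0 = y_0^+$. Define $z_1 \in (y_1,y_0) \cap \Z$ symmetrically from the set $Q = \{q \in (y_1,y_0) \cap \Z \mid \{y_1,q\} \in \X\}$. Since $Y \notin \X$, we have $z_0 \ne y_1$ and $z_1 \ne y_0$, so $z_0 \in (y_0,y_1)$ and $z_1 \in (y_1,y_0)$; in particular $Z$ is a diagonal crossing $Y$. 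To check $Z \in \nc \X$, suppose for contradiction that some $X = \{x_0,x_1\} \in \X$ crosses $Z$. When $X$ has both endpoints on one side of $Y$, apply Ptolemy to $X$ together with $\{y_0,z_0\} \in \X$ (or $\{y_1,z_1\} \in \X$, as appropriate): the resulting new diagonal of the form $\{y_0,b\}$ with $b \in P$ and $b > z_0$ contradicts the supremum; and when $P = \varnothing$, the very existence of such crossing $X$ directly forces a diagonal $\{y_0,b\} \in \X$ contradicting $P = \varnothing$.

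The main obstacle is the case where $X$ itself crosses $Y$, for then $X$ need not cross $\{y_0,z_0\}$, so Ptolemy cannot be applied naively. To handle this, refine the construction of $z_0,z_1$ by enlarging $P$ and $Q$ with the endpoints of $\X$-diagonals that cross $Y$; the Ptolemy condition forces the family of crossings of $Y$ in $\X$ to be closed under ``rectangle swaps'' (monotone pairs of crossings produce new crossings), so the refined suprema are still attained in $\Z$ by Lemma \ref{L:set U}, and the Ptolemy argument now applied to the extremal crossing diagonal either violates this refined maximality or exhibits $Y$ as an element of $\X$. When an iterated Ptolemy step would require taking a limit of endpoints, Lemma \ref{L:16} replaces the direct Ptolemy application and controls accumulation near proper limit points of $\Z$; verifying that this refined construction closes the argument in every position of $X$ relative to $Y$ and $Z$ is the core combinatorial obstacle.
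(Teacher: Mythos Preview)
Your overall strategy---take $Y=\{y_0,y_1\}\notin\X$ and manufacture a witness $Z\in\nc\X$ crossing $Y$---is a natural contrapositive reformulation, and it is \emph{different} from the paper's proof, which works directly: it fixes $\{s,t\}\in\nc^2\X$ and shows by a case analysis on $\{z\in(s,t]:\{s,z\}\in\X\}$ and $\{z\in(t,s):\{s,z\}\in\X\}$ that $\{s,t\}\in\X$, invoking Lemma~\ref{L:16} in a carefully prepared situation. So a comparison would be warranted \emph{if} your argument were complete; but as written it has a real gap.

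The problematic configuration is exactly the one you flag as the ``main obstacle'': $X=\{x_0,x_1\}\in\X$ crossing $Y$ with $x_0\in(z_0,y_1)$ and $x_1\in(z_1,y_0)$. Here $X$ crosses $Z$ but crosses neither $\{y_0,z_0\}$ nor $\{y_1,z_1\}$, so no Ptolemy step is available, and your ``basic'' $Z$ can fail to lie in $\nc\X$. Your fix is to enlarge $P$ (and $Q$) by the $(y_0,y_1)$-endpoints of $\X$-diagonals crossing $Y$ and take the supremum again; but Lemma~\ref{L:set U} does \emph{not} cover such a set, since it only treats $\{z:\{s,z\}\in\X\}$ with a fixed endpoint $s$. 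The relevant statement is closer in spirit to Lemma~\ref{L:supremum general}, but that lemma assumes the bounding diagonal lies in $\X$, which is precisely what you do not have for $Y$. Moreover, even granting that the refined supremum $z_0'$ lies in $\Z$, you still have to rule out a crossing $X\in\X$ with $x_0\in(z_0',y_1)$ and $x_1\in(z_1',y_0)$; the ``rectangle swap'' closure under Ptolemy gives you new crossings but does not by itself force the endpoint $x_0$ to be ${}\leqslant z_0'$, and there is no extremal $\X$-diagonal at $z_0'$ to apply Ptolemy against unless you also control the $(y_1,y_0)$-endpoint simultaneously. Your appeal to Lemma~\ref{L:16} is not tied to a concrete instance of its hypothesis~\eqref{eqn:s<t<v}, and your final sentence (``verifying that this refined construction closes the argument \ldots\ is the core combinatorial obstacle'') is an explicit admission that the case analysis is not carried out.

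In short: the outline is plausible and the approach is genuinely different from the paper's, but the heart of the matter---handling $\X$-diagonals that cross $Y$ ``outside'' both $\{y_0,z_0\}$ and $\{y_1,z_1\}$---is only gestured at. To make this route work you would need (i) a supremum-attained statement for the enlarged $P'$ that does not presuppose $Y\in\X$, and (ii) a coupled choice of $z_0',z_1'$ (not independent suprema) so that every $\X$-diagonal crossing $Y$ already has its $(y_0,y_1)$-endpoint ${}\leqslant z_0'$ \emph{and} its $(y_1,y_0)$-endpoint ${}\leqslant z_1'$. Absent those two ingredients, the proof is incomplete.
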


\begin{proof}
	The inclusion $\X \subseteq \nc^2 \X$ follows immediately from Definition \ref{def:nc}
	(and does not	need any of the assumptions on $\X$).
	  
	For the inclusion $\nc^2 \cX \subseteq \cX$, let $\{s,t\} \in \nc^2 \X$ be given. Our proof will
	be divided into cases and subcases. 
	For each one we will show either that $\{s,t\} \in \X$, or that we
	can deduce a contradiction.
	
	{\em Case A: There does not exist $z \in (s,t] \cap \Z$ such that $\{s,z\} \in \X$.} We will show that 
	this assumption leads to a contradiction. 
	
	Observe that $\{s,t\} \in \nc^2 \X$ implies $\{s^-,s^+\} \notin \nc \X$, so there exists a $z \in \Z$ such that $\{s,z\} \in \X$. By assumption we have 
	$z \notin (s,t]$, so the set
		\[
			V = \big\{z \in (t,s) \cap \Z \,\big|\, \{s,z\} \in \X \big\}
		\]
	is non-empty. Set $v = \inf_{(t,s)}V$.
	We claim that $v\in L(\Z)$. 
	Assume for a contradiction that $v \in \Z$. Then we have $\{s,v\} \in \X$. It follows from the assumption in Case A that $\{s,t\} \notin \X$, so $v \in [t^+,s^{--}]$. Then 
	$\{s^+,v\}$ crosses $\{s,t\} \in \nc^2 \X$, whence 
	$\{s^+,v\} \notin \nc \X$. Thus there is a diagonal $\{p,q\} \in \X$ crossing $\{s^+,v\}$. 
	However, this diagonal can not have $s$ as one of its endpoints, due to the assumption in Case A
	and the definition of $v$ as infimum. So we can deduce that the diagonal $\{p,q\}\in \X$ crosses
	the diagonal $\{s,v\}\in \X$; in particular, one of the endpoints, say $p$, lies in $(s,v)$. But then
	the Ptolemy condition yields that $\{s,p\}\in \X$, contradicting the assumption in Case A 
	and the definition of $v$ as an infimum.
		
	We thus have shown that 
	$v \in L(\Z)$ with $t < v < s$. From the definition of $v$ as infimum there must exist	
	a sequence of diagonals $\{s,v_i\}_{i \in \ZZ_{\geqslant 0}}$ from $\X$ with $v_i \in (v,s)$ and 
	$v_i \to v$ converging from above. Since $\Z$ satisfies the two-sided limit condition
	(see Definition \ref{D:admissible}), there is also a sequence of points in $\Z$ converging to 
	$v$ from below; in particular, $(t,v) \cap \Z$ is non-empty.
	
	For each such $w \in (t,v) \cap \Z$ we have $s^+ < t < w < v < s$,
	so $\{s,t\} \in \nc^2 \X$ crosses $\{s^+,w\}$ whence $\{s^+,w\} \notin \nc \X$. So there is 
	a diagonal $\{p,q\} \in \X$ crossing $\{s^+,w\}$. 
	This diagonal cannot have $s$ as one of its endpoints
	because of the assumption in Case A and the definition of $v$ as infimum.
	So we can assume $p \in [s^{++},w^-]$ and $q \in [w^+,s)$. 
	If $v < q < s$ then there exists an $i \in \ZZ_{\geqslant 0}$ 
	such that $\{s,v_i\}\in \X$ and $ \{p,q\} \in \X$ cross; by the Ptolemy condition it follows
	that $\{s,p\} \in \X$, contradicting our assumption in Case A and the definition of $v$ as infimum.
	
	Since this argument worked for each $w\in (t,v)\cap \Z$, we can apply Lemma \ref{L:16}.
	Thus for each $w\in (t,v)\cap \Z$ there exists a diagonal $\{p',q'\} \in \X$ with
		\begin{equation}\label{Eqn:star}
			s^+ < p' \leqslant t < w < q' < v.
		\end{equation}
		
		As already mentioned above, the two-sided limit condition yields a sequence 
		$\{w_i\}_{i\in\ZZ_{\geqslant 0}}$ with $w_i\to v$ from below.
	By (\ref{Eqn:star}) we can find a sequence $\{p'_i,q'_i\}_{i \in \ZZ_{\geqslant 0}}$ of diagonals
	from $\X$ with $p'_i \in [s^{++},t]$ and 
	$q'_i \in (w_i,v)$ for each $i \in \ZZ_{\geqslant 0}$. It is clear that $q'_i \to v$ from below and by compactness and passing to a subsequence we can assume $p'_i \to r$ from below or above for some $r \in [s^{++},t]$.
	By conditions PC1 and PC2 there is also a sequence $\{p''_i,q''_i\}_{i \in \ZZ_{\geqslant 0}}$ from $\X$ with $p''_i \to r$ and $q''_j \to v$ from above. 
	This implies that there must exist $j, l \in \ZZ_{\geqslant 0}$ such that
	$\{s,v_l\} \in \X$ and $\{p''_j,q''_j\} \in \X$ cross (more precisely, for each $j$ there are
	infinitely many $l$ such that $\{s,v_l\} \in \X$ and $\{p''_j,q''_j\} \in \X$ cross).
	But then the Ptolemy condition gives $\{s,p''_j\} \in \X$, contradicting the assumption in 
	Case A. 
	
	Therefore we have now shown that Case A cannot occur.
	
	{\em Case B: There exists a $z \in (s,t] \cap \Z$ such that $\{s,z\} \in \X$.}  Then the set $U([s,t])$ from Lemma \ref{L:set U} is non-empty, and by Lemma \ref{L:set U} its supremum $u = \sup_{[s,t]}U([s,t])$ lies in $\Z$.
	
	{\em Subcase B1: We have $u = t$. } Then $\{s,t\} = \{s,u\} \in \X$ and we are done. 
	
	{\em Subcase B2: We have $u \in (s,t)$. } 	We will show that this assumption also leads to a contradiction.

	Again, consider the set
		\[
			V = \big\{ y \in (t,s) \cap \Z \,\big|\, \{s,y\} \in \X \big\}.
		\]
		If $V=\emptyset$ then a symmetric version of the assumption in Case A is satisfied; 
		so we can deduce a contradiction exactly as in Case A.
	So we can assume that $V \neq \emptyset$. Set 
		$v = \inf_{(t,s)}V$. 
	
	First suppose $v \in \Z$. Then $\{s,v\} \in \X$ and $t<v<s$. Since $s<u<t$ we have that $\{u,v\}$ is a diagonal of $\Z$ which crosses $\{s,t\}$. Since
	$\{s,t\} \in \nc^2 \X$, this means that $\{u,v\} \notin \nc \X$. So there is a diagonal 
	$\{p,q\} \in \X$ which crosses $\{u,v\}$ and we can assume $p \in [u^+,v^-]$ and $q \in [v^+,u^-]$.
	
	Note that $q=s$ is impossible due to the definition of $u$ as supremum and of $v$ as 
	infimum, respectively. Thus we have $q\neq s$; but then the diagonal $\{p,q\}\in \X$
	crosses $\{s,u\}\in \X$ or $\{s,v\}\in\X$. In either case, the Ptolemy condition implies
	that $\{s,p\}\in\X$, again contradicting the choice of $u$ and $v$ as supremum and infimum,
	because $p\in[u^+,v^-]$.
	
	Therefore we suppose now that $v \notin \Z$, so we have $v \in L(\Z) \cap (t,s)$ as sketched in Figure \ref{fig:sutwv}.
\begin{figure}
	\begin{center}
	 \begin{tikzpicture}[scale=5,cap=round,>=latex]
	 \draw (0,0) circle(0.5cm);
	\node (d) at (0:0.6cm){$s$};
	\draw (0:0.48) -- (0:0.52);
	\draw (-20:0.48) -- (-20:0.52);
	\draw (-40:0.48) -- (-40:0.52);
	\draw (-60:0.48) -- (-60:0.52);	
	\draw (275:0.5) node[fill=white,circle,inner sep=0.065cm] {} circle (0.02cm);	
	\node (d) at (270:0.6cm){$v$};
	\node (d) at (110:0.6cm){$u$};
	\draw (110:0.48) -- (110:0.52);
	\node (d) at (150:0.6cm){$t$};
	\draw (150:0.48) -- (150:0.52);
	\node (w) at (200:0.6cm){$w$};
	\draw (200:0.48) -- (200:0.52);
	 
	\draw (0:0.5) to[bend left] (110:0.5);
	\draw (0:0.5) to[bend right] (-20:0.5);
	\draw (0:0.5) to[bend right] (-60:0.5);
	\draw (0:0.5) to[bend right] (-40:0.5);
	
	\node (e) at (-65:0.45) {$\cdot$};
	\node (f) at (-71:0.45) {$\cdot$};
	\node (g) at (-77:0.45) {$\cdot$};
	
	\end{tikzpicture}
	\end{center}
\caption{Illustration of the proof of Lemma \ref{L:Ptolemy implies nc^2 = id}.} \label{fig:sutwv}
\end{figure}
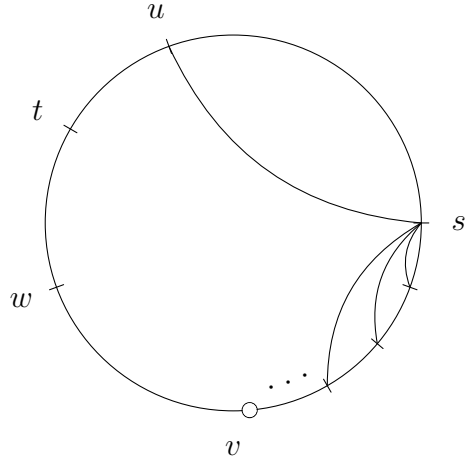
Note that indeed there is a sequence of diagonals $\{s,v_i\}$ from $\X$ with $v_i\to v$ from above, since $v=\inf_{(t,s)}V$ by definition.
	For each $w \in (t,v) \cap \Z$ we have 
		$u < t < w < v < s$,
	so $\{s,t\} \in \nc^2 \X$ crosses $\{u,w\}$ whence $\{u,w\} \notin \nc \X$. So there is 
	a diagonal $\{p,q\} \in \X$ crossing $\{u,w\}$ and we can suppose $p \in [u^+,w^-]$ and $q \in [w^+,u^-]$. 
	
	We claim that $q\not\in (v,u^-]$. Note that $q=s$ is impossible due to the definition of $u$ as 
	supremum and of $v$ as 
	infimum, respectively. Further, if $q\in (v,u^-]$ then $\{p,q\}\in\X$
	crosses $\{s,u\}\in \X$ or one (actually, infinitely many) of $\{s,v_i\}\in \X$. In any
	case, the Ptolemy condition forces $\{s,p\}\in \X$, a contradiction to the choice of $u$
	as supremum or of $v$ as infimum.
	
	So we have shown that $q \in [w^+,v)$. To sum up, we have $t \in (u,v) \cap \Z$ with $u \in \Z$ and $v \in L(\Z)$ and for each $w \in (t,v) \cap \Z$ there exists $\{p,q\} \in \X$
	with
		\[
			u < p < w < q < v.
		\]
	Lemma \ref{L:16} implies that for each $w \in (t,v)\cap \Z$ there exists 
	a diagonal $\{p',q'\} \in \X$ with 
		\begin{equation}\label{Eqn:doublestar}
			u < p' \leqslant t < w < q' < v.
		\end{equation}
	Let $\{w_i\}_{i \in \ZZ_{\geqslant 0}}$ be a sequence in $(t,v)$ with $w_i \to v$ from below. By (\ref{Eqn:doublestar}) we can find a sequence $\{p'_i,q'_i\}_{i \in \ZZ_{\geqslant 0}}$ of diagonals
	from $\X$ with
		\[
			u < p'_i \leqslant t < w_i < q'_i < v,
		\]
	for each $i \in \ZZ_{\geqslant 0}$. 
	It is clear that $q'_i \to v$ from below and by compactness and
	passing to a suitable subsequence we can assume $p'_i \to r$ from below or above for some $r \in [u^+,t]$.
	
	By conditions PC1 and PC2 there is also a sequence $\{p''_i,q''_i\} \in \X$ with $p''_i \to r$ 
	from above
	and $q''_i \to v$ from above. By passing to a subsequence we can assume that $p''_i \in [r,t) \subseteq [u^+,t]$ for each $i \in \mathbb{Z}_{\geqslant 0}$.
	
	But then it is clear that there must exist $j,l\in\mathbb{Z}_{\geqslant 0}$ such that $\{p''_j,q''_j\} \in \X$ crosses $\{s,v_l\}\in \X$ (see Figure \ref{fig:sutwv}).
	Then the Ptolemy condition yields that $\{s,p''_j\}\in \X$, 
	contradicting the definition of $u$ as a supremum.

	Therefore we have finally shown that Subcase B2 cannot occur.
\end{proof}

The following notation will be useful: If $X \subseteq \cT$ is an additive subcategory then we write $\Hom_\T(X,y) = 0$ when $\Hom_\T(x,y) = 0$ for each $x \in X$, and $\Hom_\T(y,X) = 0$ when $\Hom_\T(y,x) = 0$ for each $x \in X$.
We set
\[
  X^\perp = \{y \in \T \mid \Hom_\T(X,y) = 0\}
  \;\;,\;\;
  ^\perp X = \{y \in \T \mid \Hom_\T(y,X) = 0\}. 
\]

The following is Theorem \ref{thm:A} from the introduction.

\begin{Theorem}
\label{T:torsion pair}
Let $\X$ be a set of diagonals of $\cZ$. Then $\add E(\X)$ is the first half of a torsion pair in $\CC(\Z)$ if and only if $\X$ satisfies conditions PC1, PC2, and the Ptolemy condition.
\end{Theorem}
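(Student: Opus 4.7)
The proof splits into two directions, with Theorem \ref{T:precovering} and Lemmas \ref{L:nc^2 = id implies Ptolemy} and \ref{L:Ptolemy implies nc^2 = id} as the central tools.

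\emph{Forward direction.} Suppose $(\add E(\X), Y)$ is a torsion pair in $\CC(\Z)$. Axiom (T2) automatically produces an $\add E(\X)$-precover of every object $t$: in the triangle $x \to t \to y \to \Sigma x$, any morphism $x' \to t$ with $x' \in \add E(\X)$ composes to zero with $t \to y$ by (T1), hence factors through $x$. Thus $\add E(\X)$ is precovering and Theorem \ref{T:precovering} yields PC1 and PC2. Moreover, $\add E(\X)$ is closed under extensions: given a triangle $a \to b \to c \to \Sigma a$ with $a, c \in \add E(\X)$, the long exact sequence combined with (T1) forces $\Hom(b, Y) = 0$; applying (T2) to $b$ makes the map $b \to y$ vanish, the resulting triangle splits, and $b$ becomes a summand of some object of $\add E(\X)$. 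To deduce the Ptolemy condition, I would invoke the standard exchange triangles in $\CC(\Z)$: for crossing diagonals $\{x_0,x_1\}, \{y_0,y_1\} \in \X$ two triangles exist whose middle terms are direct sums of the $E$'s on the Ptolemy arcs (with non-diagonal summands ignored), and extension closure places those arcs in $\X$.

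\emph{Backward direction.} Suppose $\X$ satisfies PC1, PC2, and the Ptolemy condition. Theorem \ref{T:precovering} gives that $\add E(\X)$ is precovering, and Lemma \ref{L:Ptolemy implies nc^2 = id} gives $\nc^2 \X = \X$. Set $Y := (\add E(\X))^\perp$; axiom (T1) then holds by definition. To verify (T2), I take a precover $\pi \colon x \to t$ of an arbitrary $t \in \CC(\Z)$ and complete it to a triangle $x \xrightarrow{\pi} t \to y \to \Sigma x$; the aim is to show $y \in Y$. Applying $\Hom(E(X'), -)$ for $X' \in \X$ and using that $\pi$ is a precover, $\Hom(E(X'), y)$ embeds into $\Hom(E(X'), \Sigma x)$, and the task is to show this embedding is zero.

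\emph{Main obstacle.} The hardest step is the vanishing of the preceding embedding. By the 2-Calabi--Yau property, a non-zero element would correspond, via duality, to a summand $E(X^i)$ of $x$ with $X^i$ crossing $X'$; the Ptolemy condition combined with $\nc^2 \X = \X$ should then either contradict this non-vanishing directly or allow $\pi$ to be enlarged to absorb the offending morphism. Making this precise will likely require the explicit minimal precover constructed in the proof of Theorem \ref{T:precovering}, together with an octahedral argument identifying the indecomposable summands of the cone $y$ with diagonals in $(\nc\X)^{-}$. This last set is the combinatorial description of $Y$ obtained by combining Section \ref{sec:IT}(vi) with the formula $\Sigma E(\{v_0,v_1\}) = E(\{v_0^-,v_1^-\})$ to conclude that $E(W) \in (\add E(\X))^\perp$ precisely when $W^+$ crosses no element of $\X$.
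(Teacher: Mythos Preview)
Your forward direction is essentially correct, though more laborious than necessary: once you know $(\add E(\X),Y)$ is a torsion pair, the standard identity $\add E(\X) = {}^\perp Y = {}^\perp\big((\add E(\X))^\perp\big)$ translates, via Section~\ref{sec:IT}(v) and the $2$-Calabi--Yau property, directly into $\nc^2\X = \X$, and then Lemma~\ref{L:nc^2 = id implies Ptolemy} hands you the Ptolemy condition with no exchange triangles needed.

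The backward direction, however, has a genuine gap exactly where you flag it.  You set up the triangle $x \xrightarrow{\pi} t \to y \to \Sigma x$ from a precover and correctly observe that $\Hom(E(X'),y)$ injects into $\Hom(E(X'),\Sigma x)$; the problem is that this injection need not vanish for an arbitrary precover, and your proposed fix---computing the indecomposable summands of the cone via an octahedral argument and matching them against $(\nc\X)^-$---is not a workable strategy.  The cone of a non-minimal precover can easily have summands $E(W)$ with $W^+$ crossing something in $\X$, so the combinatorial description of $(\add E(\X))^\perp$ cannot be checked summand by summand without further control.

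The paper avoids all of this by quoting \cite[Proposition~2.3]{IY:mutation}: $\add E(\X)$ is the first half of a torsion pair if and only if it is precovering and satisfies $\add E(\X) = {}^\perp\big((\add E(\X))^\perp\big)$.  The latter condition is exactly $\nc^2\X = \X$, and the whole theorem then reduces to Theorem~\ref{T:precovering} together with Lemmas~\ref{L:nc^2 = id implies Ptolemy} and~\ref{L:Ptolemy implies nc^2 = id}.  If you want to prove the backward implication by hand rather than cite Iyama--Yoshino, the correct tool is the triangulated Wakamatsu lemma: choose $\pi$ to be a \emph{right minimal} precover (possible since $\CC(\Z)$ is Krull--Schmidt), and use extension-closure of $\add E(\X)$ (which you have, either from Ptolemy via exchange triangles or immediately from $\nc^2\X=\X$).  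A nonzero $g\colon x'\to y$ then yields a nonzero $\phi\colon x'\to\Sigma x$; the extension $x\to e\to x'\xrightarrow{\phi}\Sigma x$ has $e\in\add E(\X)$, and completing to a morphism of triangles produces $h\colon e\to t$ with $h\iota=\pi$.  Factoring $h$ through the precover and invoking right minimality forces $\phi=0$, the desired contradiction.  This is the argument your ``main obstacle'' paragraph is groping towards; the missing ingredients are minimality of $\pi$ and the explicit use of extension-closure rather than a direct analysis of the cone.
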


\begin{proof}
	If $Y$ is a diagonal of $\Z$, then by Section \ref{sec:IT}(v) we have $Y \in \nc \X$ if and only if 
		\[
			\Ext_{ \cC( \cZ ) }^1(\add E(\X), E(Y)) = 0,
		\] 
	if and only if $E(Y) \in (\Sigma^{-1} \add E(\X))^\perp$. Symmetrically (recall that $\CC(\Z)$ is 2-Calabi-Yau), $Y \in \nc \X$ if and only if
		\[
			\Ext_{ \cC( \cZ ) }^1(E(Y), \add E(\X)) = 0,
		\]
	if and only if $E(Y) \in {}^\perp(\Sigma \add E(\X))$. Thus, $\X = \nc^2 (\X)$ if and only if
		\[
			\add E(\X) = {}^\perp((\add E(\X))^\perp).
		\]
	Now, by \cite[Proposition~2.3]{IY:mutation}, the subcategory  
	$\add(E(\X))$ is the first half of a torsion pair if and only if $\add(E(\X))$ is precovering and $\add E(\X) = {}^\perp((\add E(\X))^\perp)$,
	which by the above is the case if and only if $\add(E(\X))$ is precovering and $\X = \nc^2 \X$. By Theorem \ref{T:precovering} and Lemmas \ref{L:nc^2 = id implies Ptolemy} and \ref{L:Ptolemy implies nc^2 = id}, this is equivalent to
	$\X$ satisfying conditions PC1, PC2, and the Ptolemy condition.
\end{proof}

\section{Cluster tilting subcategories of the cluster categories $\CC(\Z)$}
\label{sec:cluster_tilting}

This section proves Theorems \ref{thm:B} and \ref{thm:C} from the introduction (=Theorems \ref{T:cluster tilting} and \ref{thm:cluster_structure}).  To set the scene, recall the definition of cluster tilting subcategories of triangulated categories due to Iyama \cite[def.\ 1.1]{I}.

\begin{Definition}
\label{def:cluster_tilting}
Let $\T$ be a triangulated category. A full	subcategory $X \subseteq \T$ is called {\em weakly cluster tilting} if 	$X = (\Sigma^{-1}X)^{\perp} = {}^{\perp}(\Sigma X)$.
	
A subcategory $Y \subseteq \T$ is called {\em cluster tilting} if it is weakly cluster tilting and functorially finite, i.e.\ it is precovering (see Definition \ref{D:precovering}) and preenveloping (for each $t \in T$ there is a morphism $f \colon t \to y$ with $y \in Y$ such that each morphism $t \to y'$ with $y' \in Y$ factors through $f$).
\end{Definition} 
\medskip

\begin{Remark} \label{rem:clustertilting}
By \cite[Lemma~3.2(3)]{KZ:clustertilting} a full subcategory $Y \subseteq \cT$  is cluster tilting if and only if it is weakly cluster tilting and precovering.  So we will not need to consider the preenveloping property.  
\end{Remark}
\medskip

\begin{Lemma}\label{L:upper bounds of arcs ending in an interval}
	Let $\X$ be a set of diagonals of $\Z$ satisfying condition PC1 or condition PC2. 
For $z \in \Z$ and $a \in L(\Z)$, define
		\[
			U = \big\{u \in [z, a) \cap \Z \,\big|\, \{z,u\} \in \X \big\}.
		\]
	Then one of the following happens:
		\begin{enumerate}
		\setlength\itemsep{4pt}
			\item{$\X$ has a fountain at $z$ converging to $a$.}
			\item{$U = \varnothing$.}
			\item{$s = \sup_{[z,a]} U \in \Z$.}
		\end{enumerate}
\end{Lemma}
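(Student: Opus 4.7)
The plan is to prove the trichotomy by contradiction: I would assume that neither (ii) nor (iii) holds and then deduce (i). So suppose $U \neq \varnothing$ and $s := \sup_{[z,a]} U \notin \Z$. Since $z \in \Z$, the supremum $s$ lies in $\overline{\Z}$, and as $s \notin \Z$ we conclude $s \in L(\Z)$.

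Next I would use the supremum to produce a right fountain. By definition of $s$ there is a sequence $\{u_i\}_{i \in \ZZ_{\geqslant 0}}$ in $U$ with $u_i \to s$ from below, and by the definition of $U$ we have $\{z,u_i\} \in \X$ for each $i$. This is precisely a right fountain at $z$ converging to $s \in L(\Z)$. Since $\X$ is assumed to satisfy PC1 or PC2, I can now invoke Remark \ref{R:special case half fountain} to upgrade this to a (full) fountain at $z$ converging to $s$; in particular, I obtain a sequence $\{z,y_i\}_{i \in \ZZ_{\geqslant 0}}$ from $\X$ with $y_i \to s$ from above.

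Finally I need to identify $s$ with $a$, which will give conclusion (i). Suppose for contradiction that $s \neq a$, so $s < a$ strictly in the interval $[z,a]$. Because $y_i \in \Z$ but $s \notin \Z$, we have $y_i \neq s$, so convergence from above yields $s < y_i$ for all large $i$, and by choosing a small neighbourhood of $s$ we may arrange $y_i < a$ as well. Then $y_i \in [z,a) \cap \Z$ with $\{z,y_i\} \in \X$, so $y_i \in U$; but $y_i > s$ contradicts $s = \sup_{[z,a]} U$. Hence $s = a$, and the sequence $\{z,u_i\}$ together with $\{z,y_i\}$ constitutes a fountain at $z$ converging to $a$, establishing (i).

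I do not anticipate a real obstacle here: the whole argument reduces to unpacking the supremum, applying the half-fountain upgrade from Remark \ref{R:special case half fountain}, and pushing a strict inequality $s < a$ to a contradiction. The only point requiring care is the bookkeeping around convergence from above versus below, and the observation that $s \in L(\Z)$ automatically forces the sequences from $\Z$ to avoid $s$ itself.
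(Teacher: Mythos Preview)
Your proof is correct and follows essentially the same route as the paper: assume (ii) and (iii) fail, extract a right fountain at $z$ converging to $s \in L(\Z)$ from the supremum, upgrade it to a full fountain via Remark~\ref{R:special case half fountain}, and then use the left-fountain branch to force $s = a$. The paper's proof is terser on the last step (it simply asserts $s = a$ ``by definition of $s$ as supremum''), whereas you spell out the contradiction explicitly; otherwise the arguments are identical.
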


\begin{proof}
Assume that (ii) and (iii) do not hold. Then there exists a right fountain at
$z$ converging to the supremum $s\in L(\Z)$. By Remark \ref{R:special case half fountain} there is even a fountain at
$z$ converging to $s$. But by definition of $s$ as supremum over the interval $[z,a]$ we must have $s=a$, i.e.\
(i) holds. 
\end{proof}

\begin{Proposition} 
\label{prop:PC2_implies_fountainleapfrog}
Let $\X$ be a maximal set of pairwise non-crossing diagonals of $\cZ$, and suppose that $\X$ satisfies condition PC2.  For each $a \in L(\Z)$, the set $\X$ has a fountain  or a leapfrog converging to $a$.
\end{Proposition}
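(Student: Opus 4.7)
The plan is to argue by contradiction, supposing $\X$ has neither a fountain at any $z\in\Z$ converging to $a$, nor a leapfrog converging to $a$. Since $\X$ satisfies PC2, Remark~\ref{R:special case half fountain} upgrades the first assumption to: no $z$ has even a half-fountain converging to $a$, so every $z\in\Z$ has only finitely many $\X$-neighbours close to $a$ on each side.

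By maximality and absence of a leapfrog, I would first extract a sequence $\{p_n,q_n\}\in\X$ with $p_n\to a$: taking $z_n\to a$ from below and $w_n\to a$ from above, cofinitely many $\{z_n,w_n\}$ are not in $\X$ (else a leapfrog), so by maximality each crosses some $\{p_n,q_n\}\in\X$ whose endpoint in the short arc $(z_n,w_n)$ through $a$ is forced toward $a$. Pass to subsequences so $p_n\to a$ from below (WLOG) and $q_n\to b\in\overline{\Z}$. I would then classify $b$: the case $b=a$ from above is a leapfrog (excluded); the case $b\neq a$ with $q_n\to b$ from below forces, by a direct cyclic-order check, $\{p_i,q_i\}$ to cross $\{p_j,q_j\}$ for $i<j$ both large, contradicting non-crossing; the case $b\neq a$ with $q_n\to b$ from above allows PC2 to produce $\{p'_n,q'_n\}\in\X$ with $p'_n\to a$ from above and $q'_n\to b$ from above, and a similar cyclic-order check then forces $\{p_i,q_i\}$ to cross $\{p'_j,q'_j\}$ for $i\gg j$. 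Hence $q_n\to a$ from below, and non-crossing yields, after subsequencing, a nested configuration $p_1<q_1<p_2<q_2<\cdots<a$.

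By a symmetric argument, there must be infinitely many $\X$-endpoints arbitrarily close to $a$ from above: otherwise some neighbourhood $N^+$ of $a$ from above contains no $\X$-endpoint, and for $v\in N^+\cap\Z$ the diagonal $\{v,v^{++}\}$ neither lies in $\X$ (as $v$ is no $\X$-endpoint) nor crosses any (its only possible crossing partner must be incident to $v^+$, also no $\X$-endpoint), contradicting maximality. Running the preceding classification on the ``from above'' side produces an analogous nested sequence $\{p'_l,q'_l\}\in\X$ with $a<\cdots<q'_2<p'_2<q'_1<p'_1$. Finally I would examine the straddling diagonals $\{q_i,p'_j\}$: a cyclic-order check shows they cross none of the nested $\{p_k,q_k\}$ or $\{p'_l,q'_l\}$. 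If $\{q_i,p'_j\}\in\X$ for a cofinal set of pairs, a suitable subsequence is a leapfrog at $a$, giving the desired contradiction. Otherwise maximality forces $\{q_i,p'_j\}$ to cross some other $\{x,y\}\in\X$, which by the classification cannot be straddling or mixed for large $i,j$ (only finitely many such), so $\{x,y\}$ must be a ``bridging'' BB or AA diagonal. The main obstacle is to rule out these bridgings; I expect this to follow by applying Lemma~\ref{L:upper bounds of arcs ending in an interval} to the vertices $q_i$ (whose upper-bound sets $\{u\in[q_i,a)\cap\Z:\{q_i,u\}\in\X\}$ have supremum in $\Z$ under PC2), iterating the classification on any bridging subsequence, and using the no-half-fountain assumption to limit the degree at each vertex and drive the final contradiction.
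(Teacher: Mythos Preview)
Your classification of the convergence behaviour of $\X$-diagonals with an endpoint tending to $a$ is correct and nicely argued: the three cases (other endpoint $\to a$ from above, $\to b\neq a$ from below, $\to b\neq a$ from above) do each yield a contradiction via non-crossing or PC2, leaving only the case where both endpoints converge to $a$ from the same side. The mini-argument in your step 5 showing that $\X$-endpoints accumulate on both sides of $a$ is also fine.

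The genuine gap is exactly where you flag it. Once you have a nested sequence $\{p_k,q_k\}$ from below and a nested sequence $\{p'_l,q'_l\}$ from above, the straddling diagonal $\{q_i,p'_j\}$ can indeed be crossed by some $\{x,y\}\in\X$ with, say, $y<q_i<x<a$: such a ``bridging'' diagonal is compatible with non-crossing of all the $\{p_k,q_k\}$ provided $y$ sits in one of the gaps $(q_{k-1},p_k)$. Iterating your classification on the bridging diagonals simply produces yet another one-sided nested sequence, and you go in circles. I do not see how to close this without importing a new idea.

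The paper avoids this trap by proceeding constructively rather than by exhaustive case analysis. Starting from any $\{x,y\}\in\X$ with $x<y<a$, it uses Lemma~\ref{L:upper bounds of arcs ending in an interval} to pass to the \emph{maximal} $s_1\in[x,a)$ with $\{x,s_1\}\in\X$, then observes that $\{x,s_1^+\}\notin\X$ must be crossed by some diagonal which, by non-crossing with $\{x,s_1\}$, has $s_1$ as an endpoint: say $\{s_1,x_1\}\in\X$. The key step you are missing is a short ``triangle'' argument: if $x_1$ also lies below $a$, one applies Lemma~\ref{L:upper bounds of arcs ending in an interval} once more to make $x_1$ maximal, and then $\{x,x_1\}\notin\X$ must be crossed by a diagonal through $s_1$ whose other endpoint is forced (by both maximalities) to lie in $(a,x)$, i.e.\ on the \emph{other} side of $a$. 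This forces alternation at every step, so the inductively built sequence $\{s_i,x_i\}$ already straddles $a$. If the two limits are not both $a$, a single application of PC2 yields crossing diagonals and the contradiction. Your suggested use of Lemma~\ref{L:upper bounds of arcs ending in an interval} at the end is on the right track, but it is this maximality-plus-triangle mechanism, not another round of your classification, that actually drives the endpoints across $a$.
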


\begin{proof}
Assume that $\X$ does not have a fountain converging to $a$. We will show that it has a leapfrog converging to $a$.
 
Pick any diagonal $\{x,y\} \in \X$. 
By switching $x$ and $y$ if necessary we can assume $x<y<a$. By assumption, $\X$
does not have a fountain at $x$
converging to $a$. Thus, by Lemma \ref{L:upper bounds of arcs ending in an interval} 
there is a maximal $s_1\in [x,a]\cap \Z$ such that $\{x,s_1\}\in \X$.

We consider the successor $s_1^+\in \Z$ (this exists since $a$ is a limit point, i.e there are infinitely many 
elements of $\Z$ in the interval $[s_1,a)$). The diagonal $\{x,s_1^+\}$ is not in $\X$ (by maximality of $s_1$). On the 
other hand, $\X$ is maximal non-crossing, thus $\{x,s_1^+\}$ must be
crossed by a diagonal from $\X$. However, this diagonal from $\X$ cannot cross $\{x,s_1\}\in \X$ (since $\X$ is non-crossing),
so it must have $s_1$ as one of its endpoints, say $\{s_1,x_1\}\in \X$ crosses $\{x,s_1^+\}$. 

There are now two possibilities, namely $x_1\in (a,x)\cap \Z$ or $x_1\in (s_1^+,a)\cap \Z$.
We claim that, without loss of generality, we can
assume
\begin{equation}
\label{equ:claim}
  x_1\in (a,x).
\end{equation}
Assume to the contrary that $x_1\in (s_1^+,a)\cap \Z$. Then we apply Lemma
\ref{L:upper bounds of arcs ending in an interval} to the interval $[s_1,a]$ (by assumption there
is no fountain at $s_1$ converging to $a$)
and hence 
we can suppose that $x_1$ is maximal in $(s_1^+,a)\cap \Z$ with the property
that $\{s_1,x_1\}\in\X$. Now consider the diagonal $\{x,x_1\}$; it is not in $\X$ (by maximality of $s_1$). 
Since $\X$ is maximal non-crossing, there exists a diagonal in $\X$ crossing $\{x,x_1\}$. But this diagonal is not
allowed to cross $\{x,s_1\}\in \X$ or $\{s_1,x_1\}\in \X$; so this diagonal must have $s_1$ as one of its endpoints. 
Now, by definition of $x_1$ as maximum, the other endpoint of this diagonal is in the interval $(a,x)$. This finishes the
argument for \eqref{equ:claim}.  Thus there is a diagonal $\{s_1,x_1\}\in \X$ with $x_1\in (a,x)$. 

Now we repeat the above argument starting with the diagonal $\{x_1,s_1\}$ instead of $\{x,y\}$. Then we obtain 
a diagonal $\{x_2,s_2\}\in \X$ where $s_2\in (s_1,a)$ and $x_2\in (a,x_1)$.  

Inductively, we obtain two infinite sequences $(s_i)_{i\in \mathbb{Z}_{\geqslant 0}}$ and 
$(x_i)_{i\in \mathbb{Z}_{\geqslant 0}}$ of points in $\Z$ 
such that $x<s_1<s_2<s_3\ldots <a$ and $a<\ldots <x_3<x_2<x_1<x$. Moreover, there exists a corresponding
sequence of diagonals $\{x_i,s_i\}_{i\in \mathbb{Z}_{\geqslant 0}}$ in $\X$. 

The strictly increasing sequence $(s_i)_{i\in \mathbb{Z}_{\geqslant 0}}$ must converge from below
to some limit point $b\in L(\Z)$, and similarly the strictly decreasing sequence $(x_i)_{i\in \mathbb{Z}_{\geqslant 0}}$
must converge from above to some limit point $c\in L(\Z)$.

If $b=a=c$ then the diagonals $\{x_i,s_i\}_{i\in \mathbb{Z}_{\geqslant 0}}$ show that $\X$ has a leapfrog converging to $a$, and we 
are done. 

Otherwise, condition PC2 (which requires two different limit points), 
applied to the diagonals $\{x_i,s_i\}_{i\in \mathbb{Z}_{\geqslant 0}}$, yields a sequence
$\{y_i^0,y_1^0\}$ of diagonals from $\X$ such that $y_i^0\to b$ from above and $y_i^1\to c$ from above. 
But then some diagonals of this sequence obviously cross some of the diagonals $\{x_i,s_i\}$, a contradiction to $\X$ being
non-crossing.  
\end{proof}

The following observation follows easily from the definitions of leapfrog and fountain, see Definition \ref{def:leapfrog_and_fountain}.

\begin{Lemma}
\label{L:leapfrogs and fountains block limit points}
	Let $\X$ be a set of pairwise non-crossing diagonals of $\cZ$ and let $a \in L(\Z)$.
		\begin{enumerate}
		\setlength\itemsep{4pt}
			\item
				Suppose $\X$ has a leapfrog converging to $a$. Then there cannot be a sequence 
				$\{x_i,y_i\}_{i \in \ZZ_{\geqslant 0}}$ 
				of diagonals in $\X$ such that $(x_i)_{i \in \ZZ_{\geqslant 0}}$ converges to $a$ and 
				$(y_i)_{i \in \ZZ_{\geqslant 0}}$ converges to $p$ 
				for some $p \in \overline{\Z}$ with $p \neq a$.
			\item
				Suppose $\X$ has a fountain at $z\in \Z$ converging to $a$. Then there cannot be a sequence 
				$\{x_i,y_i\}_{i \in \ZZ_{\geqslant 0}}$ of diagonals in $\X$ such that $(x_i)_{i \in \ZZ_{\geqslant 0}}$ 
				converges to $a$ and $(y_i)_{i \in \ZZ_{\geqslant 0}}$ converges to $p$ for some 
				$p \in \overline{\Z}$ with $p \neq z$.
		\end{enumerate}
\end{Lemma}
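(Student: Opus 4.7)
The plan is to proceed by contradiction in both parts: given the assumed leapfrog or fountain in $\X$, for each sufficiently large $i$ we produce a diagonal from the assumed structure that crosses $\{x_i,y_i\}$, contradicting pairwise non-crossing.

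For part (i), let $\{u_j,v_j\}_{j\geqslant 0}$ witness the leapfrog, so $u_j\to a$ from below and $v_j\to a$ from above. By the dichotomy in Remark \ref{R:dichotomy}, first pass to a subsequence so that $x_i$ converges to $a$ from a fixed side. Since $p\neq a$, choose an open arc-neighbourhood $N$ of $a$ in $S^1$ with $p\notin\overline{N}$. For $j$ sufficiently large both $u_j$ and $v_j$ lie in $N$, so the short arc of $\{u_j,v_j\}$ through $a$ is contained in $N$, while the complementary long arc contains a whole neighbourhood of $p$. For $i$ sufficiently large, $x_i\in N$ lies in the short arc and $y_i$ lies in the long arc. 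Hence the diagonals $\{x_i,y_i\}$ and $\{u_j,v_j\}$ cross, a contradiction.

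For part (ii), let $\{z,u_j\}$ (with $u_j\to a$ from below) and $\{z,v_k\}$ (with $v_k\to a$ from above) witness the fountain. Pass to subsequences so that $x_i$ and $y_i$ each converge from a fixed side. When $p\neq a$, choose pairwise disjoint open arc-neighbourhoods $I_z,I_a,I_p$ of $z,a,p$ in $S^1$. For $i,j,k$ sufficiently large, $u_j,v_k,x_i\in I_a$ and $y_i\in I_p$. The three arcs appear in one of the two possible cyclic orders around $S^1$, and a direct case analysis on this order and on the side of approach of $x_i$ shows that precisely one of $\{z,u_j\}$ or $\{z,v_k\}$ has $x_i$ and $y_i$ separating its endpoints in cyclic order, hence crosses $\{x_i,y_i\}$. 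Geometrically, the two fountain diagonals cut out a wedge at $z$ whose tip near $a$ contains $x_i$; since $y_i$ lies outside this wedge (being close to $p\neq a,z$), any chord joining $x_i$ to $y_i$ must cross the wedge's boundary.

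The main obstacle is the remaining case $p=a$ in part (ii), where $y_i$ also converges to $a$. One passes to a further subsequence so that $y_i$ approaches $a$ from a fixed side. If $x_i$ and $y_i$ approach $a$ from opposite sides, then for $i$ fixed and large and $j$ chosen even larger so that $u_j$ sits strictly between $x_i$ and $a$ on $x_i$'s side, a cyclic-order alternation of the endpoints $z,x_i,u_j,y_i$ shows $\{z,u_j\}$ crosses $\{x_i,y_i\}$. The same-side sub-case is the most delicate and is the principal technical obstacle, requiring a careful joint cyclic-order analysis of $x_i,y_i,u_j,v_k$ relative to $z$ to locate a fountain diagonal whose non-$z$ endpoint is pinned strictly between $x_i$ and $y_i$ and thereby yields the crossing.
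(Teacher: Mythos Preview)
The paper itself gives no proof, only the remark that the lemma ``follows easily from the definitions''. Your arguments for part (i), and for part (ii) when $p\neq a$ (and also when $p=a$ with opposite-side convergence), are correct and are essentially what the authors have in mind: one fixes a suitable leapfrog or fountain diagonal close enough to $a$, and then for large $i$ the endpoints $x_i,y_i$ lie on opposite arcs determined by that diagonal.

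Your instinct that the same-side sub-case of $p=a$ in part (ii) is the principal obstacle is exactly right, but for a stronger reason than you anticipate: part (ii) is \emph{false} in that case, so no argument can succeed. For a counterexample, take $\cZ$ with a single limit point $a$, identified with $\BZ$ so that $n\to a$ as $|n|\to\infty$; set $z=0$ and let $\X$ consist of the fountain diagonals $\{0,\pm 10^j\}_{j\geqslant 1}$ together with the short diagonals $\{10^i+1,\,10^i+3\}_{i\geqslant 1}$. One checks directly that $\X$ is pairwise non-crossing (for each $i,j$, the points $10^i+1$ and $10^i+3$ lie on the same side of $10^j$, since either $10^i+3<10^j$ or $10^i+1>10^j$), and $\X$ has a fountain at $0$ converging to $a$; yet the sequence $\{10^i+1,10^i+3\}_{i\geqslant 1}$ lies in $\X$ and has both endpoints converging to $a\neq z$. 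Your proposed strategy of pinning a fountain endpoint strictly between $x_i$ and $y_i$ therefore cannot work in general. This does not damage the paper: the sole use of part (ii), in the proof of Proposition \ref{P:noncrossing with isolated limit points implies precovering}, concerns a sequence with endpoints converging to distinct points $p\neq q$, so in the lemma's notation only the case $p\neq a$ is ever invoked. The statement of (ii) should carry the extra hypothesis $p\neq a$, parallel to (i).
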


\begin{Proposition}\label{P:noncrossing with isolated limit points implies precovering}
	Let $\X$ be a set of pairwise non-crossing diagonals of $\cZ$. Suppose that for each $a \in L(\Z)$ there is either a fountain 
	or a leapfrog in $\X$ converging to $a$. Then $\X$ satisfies conditions PC1 and PC2. 
\end{Proposition}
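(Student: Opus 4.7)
The plan is to prove conditions PC1 and PC2 by a case analysis on where the convergence points $p$ and $q$ lie (in $\Z$ versus in $L(\Z)$), using Lemma \ref{L:leapfrogs and fountains block limit points} as the principal tool to rule out inconsistent configurations and pin down the structure of the leapfrog/fountain at each limit point.

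First I would dispose of the trivial case where both $p$ and $q$ lie in $\Z$. Here, by the dichotomy in Remark \ref{R:dichotomy}, any convergence from below (or above) to an element of $\Z$ forces the sequence to be eventually constant, so $\{p,q\} \in \X$ and the constant sequence $(x'^i_0, x'^i_1) = (p,q)$ satisfies PC1 and PC2 simultaneously, since constant sequences at points in $\Z$ converge both from above and from below.

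The main case is when exactly one of $p,q$ is a limit point, say $p \in L(\Z)$ and $q \in \Z$ (the symmetric case $q \in L(\Z)$, $p \in \Z$ relevant to PC2 is handled analogously). Here the dichotomy again forces $x^i_1 = q$ eventually, so $\X$ contains a right fountain at $q$ converging to $p$. By Lemma \ref{L:leapfrogs and fountains block limit points}(i), $\X$ cannot have a leapfrog converging to $p$, because the sequence $\{q,x_0^i\}$ has $x_0^i \to p$ while the other endpoint is constantly $q \neq p$. Hence by hypothesis there is a fountain at some $z \in \Z$ converging to $p$, and Lemma \ref{L:leapfrogs and fountains block limit points}(ii), applied to the same sequence, forces $z = q$. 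The left half of this fountain at $q$ supplies diagonals $\{q, y_i\} \in \X$ with $y_i \to p$ from above; taking $(x'^i_0, x'^i_1) = (y_i, q)$ verifies the required conclusion.

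Finally, the case $p, q \in L(\Z)$ with $p \neq q$ must be shown not to occur: if such a sequence existed in $\X$, then by Lemma \ref{L:leapfrogs and fountains block limit points}(i) no leapfrog at $p$ would be possible, and by Lemma \ref{L:leapfrogs and fountains block limit points}(ii) no fountain at any $z \in \Z$ could converge to $p$ (since $q \in L(\Z)$ is never equal to $z \in \Z$), contradicting the hypothesis of the Proposition. The main obstacle is really just the careful bookkeeping across the two conditions PC1 and PC2 and their sub-cases (whether the limit point in question is $p$ or $q$, and whether the convergence in the asserted sequence is from above or below), but each sub-case reduces mechanically to an application of Lemma \ref{L:leapfrogs and fountains block limit points} as above, so no new idea is required beyond this dichotomy.
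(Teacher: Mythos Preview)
Your proof is correct and follows essentially the same approach as the paper: both reduce to Lemma~\ref{L:leapfrogs and fountains block limit points} after disposing of the trivial case $p,q\in\Z$. The only difference is cosmetic---you separate the situations ``exactly one of $p,q$ is a limit point'' and ``both are limit points'' into distinct cases, whereas the paper handles them uniformly by assuming $p\in L(\Z)$ (or symmetrically $q\in L(\Z)$) and deducing directly from Lemma~\ref{L:leapfrogs and fountains block limit points}(ii) that the other endpoint must equal the fountain base $z\in\Z$, which simultaneously rules out the possibility that both lie in $L(\Z)$.
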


\begin{proof}
According to the definition of the conditions PC1 and PC2 (cf. Definition \ref{D:precovering}), 
let $\{x^i_0,x^i_1\}_{i \in \ZZ_{\geqslant 0}}$ be a sequence of diagonals  
from $\X$ with $x^i_0 \to p$ from below and $x^i_1 \to q$ from below or above
	and $p \neq q$.
	
	If $p,q \in \Z$, then $\{x^i_0,x^i_1\}_{i \in \ZZ_{\geqslant 0}}$ is eventually constant 
	and both conditions PC1 and PC2 are 
	trivially satisfied with $x'^i_0 = x^i_0$ and $x'^i_1 = x^i_1$.
	
	If $p \in L(\Z)$ then by Lemma \ref{L:leapfrogs and fountains block limit points}(i), 
	$\X$ cannot have a leapfrog converging to $p$, so by assumption
	$\X$ must have a fountain at some $z \in \Z$ converging to $p$. By 
	Lemma \ref{L:leapfrogs and fountains block limit points}(ii) this forces $q = z$. 
	Therefore $\X$ has a fountain at $z = q$ converging to $p$, so there certainly is a sequence $\{x'^i_0,x'^i_1\}_{i \in \ZZ_{\geqslant 0}}$ from $\X$
	with $x'^i_0 \to p$ and $x'^i_1 \to z = q$ from above: 
	we can even chose $x'^i_1 = z = q$ for each $i \in \ZZ_{\geqslant 0}$.
	
	If $q \in L(\Z)$ then an analogous argument works.
\end{proof}

The following is Theorem \ref{thm:B} from the introduction.

\begin{Theorem}
\label{T:cluster tilting}
Let $\X$ be a set of diagonals of $\Z$. Then $\add E(\X)$ is a cluster tilting sub\-ca\-te\-go\-ry if and only if $\X$ is a maximal set of pairwise non-crossing diagonals, such that for each $a \in L(\Z)$, the set $\X$ has a fountain or a leapfrog converging to $a$.
\end{Theorem}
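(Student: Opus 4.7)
The plan is to combine Remark \ref{rem:clustertilting} with Theorem \ref{T:precovering} and the two Propositions \ref{prop:PC2_implies_fountainleapfrog} and \ref{P:noncrossing with isolated limit points implies precovering} that were set up in this section. The only piece of the argument that requires fresh observation is the translation of the weakly cluster tilting condition into the combinatorial language of diagonals.

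First I would invoke Remark \ref{rem:clustertilting} to reduce the problem to showing that $\add E(\X)$ is simultaneously weakly cluster tilting and precovering. To handle the weakly cluster tilting half, I would translate both $\perp$-conditions using Section \ref{sec:IT}(v): for a diagonal $Y$ we have $E(Y) \in (\Sigma^{-1}\add E(\X))^\perp$ iff $\Ext^1_{\CC(\Z)}(E(X),E(Y))=0$ for every $X\in\X$, iff $Y$ crosses no element of $\X$, iff $Y\in\nc\X$. The $2$-Calabi--Yau property gives the symmetric statement for ${}^\perp(\Sigma\add E(\X))$. Consequently $\add E(\X)$ is weakly cluster tilting if and only if $\X=\nc\X$, which is immediately equivalent to $\X$ being a maximal set of pairwise non-crossing diagonals (the inclusion $\X\subseteq\nc\X$ being pairwise non-crossing, and $\nc\X\subseteq\X$ being maximality).

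Next I would apply Theorem \ref{T:precovering} to convert the precovering condition into the combinatorial conditions PC1 and PC2 on $\X$. At this point the theorem reduces to the purely combinatorial equivalence: for a maximal pairwise non-crossing $\X$, conditions PC1 and PC2 are equivalent to the condition that at every proper limit point $a\in L(\Z)$ there is a fountain or a leapfrog of $\X$ converging to $a$. The forward implication (PC2 forces a fountain or a leapfrog at each $a$) is precisely Proposition \ref{prop:PC2_implies_fountainleapfrog}, while the backward implication (fountain or leapfrog at each $a$ forces PC1 and PC2) is precisely Proposition \ref{P:noncrossing with isolated limit points implies precovering}.

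The main obstacle has effectively already been dispatched by those two propositions; the remaining step is to ensure that their hypotheses are correctly aligned with the two directions of the biconditional. Concretely, for the ``if'' direction I would start from a maximal pairwise non-crossing $\X$ satisfying the fountain/leapfrog condition, feed it into Proposition \ref{P:noncrossing with isolated limit points implies precovering} to obtain PC1 and PC2, then combine with Theorem \ref{T:precovering} and the $\X=\nc\X$ translation to conclude weakly cluster tilting plus precovering. For the ``only if'' direction I would start from cluster tilting $\add E(\X)$, extract $\X=\nc\X$ (hence maximal pairwise non-crossing) from the $\perp$-equalities, extract PC1 and PC2 from Theorem \ref{T:precovering}, and then appeal to Proposition \ref{prop:PC2_implies_fountainleapfrog} to produce the required fountain or leapfrog at each $a\in L(\Z)$.
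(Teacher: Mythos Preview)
Your proposal is correct and follows essentially the same route as the paper's proof: reduce via Remark \ref{rem:clustertilting} to ``weakly cluster tilting plus precovering'', translate weakly cluster tilting into maximal pairwise non-crossing via Section \ref{sec:IT}(v), translate precovering into PC1 and PC2 via Theorem \ref{T:precovering}, and then use Propositions \ref{prop:PC2_implies_fountainleapfrog} and \ref{P:noncrossing with isolated limit points implies precovering} for the remaining combinatorial equivalence. Your explicit detour through $\X=\nc\X$ is a slight elaboration of what the paper treats as ``straightforward'', but the argument is the same.
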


\begin{proof}
	By Remark \ref{rem:clustertilting}, the subcategory $\add E(\X)$ is cluster tilting if and only if it
	is weakly cluster tilting and precovering. 
	
	It is straightforward from the description of the $\Ext^1$ spaces in Section \ref{sec:IT}(v) that $\add E(\X)$ is weakly cluster tilting if and only if 
	$\X$ is a maximal set of pairwise non-crossing diagonals. 
	
	Recall from Theorem \ref{T:precovering} that $\add E(\X)$ is a precovering subcategory of $\cC( \cZ )$
	if and only if $\X$ satisfies conditions PC1 and PC2.
	
	So it remains to show that if $\X$ is a maximal set of pairwise non-crossing diagonals, then 
	$\X$ satisfies conditions PC1 and PC2 if and only if for each $a \in L(\Z)$, there is a leapfrog
	or a fountain in $\X$ converging to $a$. But these two implications have been shown in
	Propositions \ref{prop:PC2_implies_fountainleapfrog} and 
	\ref{P:noncrossing with isolated limit points implies precovering}, respectively.	
\end{proof}

\begin{Remark}
If $\Z$ has precisely one limit point, then the assertion of Theorem \ref{T:cluster tilting} was already established in \cite[Theorem B]{HJinfinity}.  In fact, the condition of being locally finite appearing there is equivalent to the existence of a leapfrog converging to the unique limit point.
\end{Remark}
\medskip

Figure \ref{fig:not_cluster_tilting} shows an example of a maximal set of pairwise non-crossing diagonals $\X$ of $\cZ$ for which the corresponding subcategory $\add E(\X)$ is not cluster tilting (only weakly cluster tilting).  In fact, neither limit point has a fountain or a leapfrog converging to it.  

Note that $\X$ satisfies condition PC1 (because no sequence of diagonals from $\X$ satisfies the assumption in PC1), but not condition PC2.  This shows that the conclusion of Proposition \ref{prop:PC2_implies_fountainleapfrog} would not be true if only condition PC1 was assumed.

\begin{figure}
\begin{center}
\begin{tikzpicture}[scale=5,cap=round,>=latex]

        \draw[black] (0,0) circle(0.5cm);
        
  \draw[black] (170:0.5) .. controls (170:0.2) and (10:0.2) .. (10:0.5);
  \draw[black] (170:0.5) .. controls (170:0.2) and (20:0.2) .. (20:0.5);
  \draw[black] (20:0.5) .. controls (20:0.2) and (160:0.2) .. (160:0.5);
  \draw[black] (160:0.5) .. controls (160:0.21) and (30:0.21) .. (30:0.5);
  \draw[black] (30:0.5) .. controls (30:0.22) and (150:0.22) .. (150:0.5);
  \draw[black] (150:0.5) .. controls (150:0.23) and (40:0.23) .. (40:0.5);
  \draw[black] (40:0.5) .. controls (40:0.24) and (140:0.24) .. (140:0.5);
  \draw[black] (140:0.5) .. controls (140:0.25) and (50:0.25) .. (50:0.5);
  \draw[black] (50:0.5) .. controls (50:0.26) and (130:0.26) .. (130:0.5);
  \draw[black] (130:0.5) .. controls (130:0.27) and (60:0.27) .. (60:0.5);
  \draw[black] (60:0.5) .. controls (60:0.28) and (120:0.28) .. (120:0.5);
  \draw[black] (120:0.5) .. controls (120:0.3) and (70:0.3) .. (70:0.5);
  \draw[black] (70:0.5) .. controls (70:0.32) and (110:0.32) .. (110:0.5);
  \draw[black] (110:0.5) .. controls (110:0.35) and (80:0.35) .. (80:0.5);
  \draw[black] (80:0.5) .. controls (80:0.37) and (100:0.37) .. (100:0.5);

  \draw[black] (190:0.5) .. controls (190:0.2) and (350:0.2) .. (350:0.5);
  \draw[black] (350:0.5) .. controls (350:0.2) and (200:0.2) .. (200:0.5);
  \draw[black] (200:0.5) .. controls (200:0.2) and (340:0.2) .. (340:0.5);
  \draw[black] (340:0.5) .. controls (340:0.21) and (210:0.21) .. (210:0.5);
  \draw[black] (210:0.5) .. controls (210:0.22) and (330:0.22) .. (330:0.5);
  \draw[black] (330:0.5) .. controls (330:0.23) and (220:0.23) .. (220:0.5);
  \draw[black] (220:0.5) .. controls (220:0.24) and (320:0.24) .. (320:0.5);
  \draw[black] (320:0.5) .. controls (320:0.25) and (230:0.25) .. (230:0.5);
  \draw[black] (230:0.5) .. controls (230:0.26) and (310:0.26) .. (310:0.5);
  \draw[black] (310:0.5) .. controls (310:0.27) and (240:0.27) .. (240:0.5);
  \draw[black] (240:0.5) .. controls (240:0.28) and (300:0.28) .. (300:0.5);
  \draw[black] (300:0.5) .. controls (300:0.3) and (250:0.3) .. (250:0.5);
  \draw[black] (250:0.5) .. controls (250:0.32) and (290:0.32) .. (290:0.5);
  \draw[black] (290:0.5) .. controls (290:0.35) and (260:0.35) .. (260:0.5);
  \draw[black] (260:0.5) .. controls (260:0.37) and (280:0.37) .. (280:0.5);

        \draw[thick,dotted] (7:0.4) -- (353:0.4);
        \draw[thick,dotted] (173:0.4) -- (187:0.4);
        
	\draw (0:0.5) node[fill=white,circle,inner sep=0.065cm] {} circle (0.02cm);	
	\draw (180:0.5) node[fill=white,circle,inner sep=0.065cm] {} circle (0.02cm);	

  \end{tikzpicture}
\caption{A maximal pairwise non-crossing set of diagonals of $\Z$ corresponding to a subcategory of $\CC(\Z)$ which is not cluster tilting.  Neither limit point has a fountain or a leapfrog converging to it.} 
\label{fig:not_cluster_tilting}
\end{center}
\end{figure}
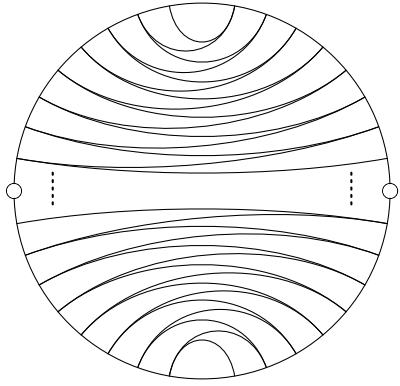

The following is Theorem \ref{thm:C} from the introduction.

\begin{Theorem}
\label{thm:cluster_structure}
The cluster tilting subcategories of $\cC( \cZ )$ form a cluster structure in the sense of \cite[sec.\ II.1]{BIRS}.
\end{Theorem}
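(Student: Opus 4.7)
The plan is to verify directly the axioms of a cluster structure from \cite[sec.\ II.1]{BIRS}: (a) uniqueness of mutation, (b) existence of exchange triangles, (c) absence of loops and $2$-cycles in the quiver of the endomorphism algebra of any cluster tilting object, and (d) compatibility of categorical mutation with Fomin--Zelevinsky quiver mutation. In view of Theorem \ref{T:cluster tilting}, all four axioms translate into combinatorial statements about the ``flip'' of a single diagonal in a cluster tilting set $\X$.

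First I would describe the flip. Fix $\X$ cluster tilting and a diagonal $X = \{x_0, x_1\} \in \X$. The diagonals of $\X \setminus \{X\}$ incident with $x_0$ (respectively $x_1$) are linearly ordered by the cyclic position of their other endpoints, and the fountain-or-leapfrog condition of Theorem \ref{T:cluster tilting} guarantees that on each side of $X$ the local configuration falls into one of three kinds: a single opposite vertex producing a genuine triangle, a fountain converging to a limit point, or a leapfrog converging to a limit point. In each case one can read off the two ``opposite corners'' of the quadrilateral-like region bounded by $X$ and its nearest neighbours in $\X$, and the flip $X^*$ is the diagonal connecting these two corners. A case-by-case check then shows that $(\X \setminus \{X\}) \cup \{X^*\}$ remains maximal non-crossing and still satisfies the fountain-or-leapfrog condition at every $a \in L(\Z)$, so by Theorem \ref{T:cluster tilting} it again corresponds to a cluster tilting subcategory. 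Uniqueness of $X^*$ follows because any indecomposable $E(Y) \not\cong E(X)$ with $(\X \setminus \{X\}) \cup \{Y\}$ cluster tilting must be a diagonal that crosses $X$ (else $Y \in \X$ by maximality) yet crosses no element of $\X \setminus \{X\}$; combinatorially this forces $Y = X^*$.

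For the exchange triangles I would invoke Section \ref{sec:IT}(vi)--(vii). The middle terms $B$ and $B'$ are the direct sums of indecomposables corresponding to the two ``bounding sides'' of the quadrilateral at $X$, and the morphisms are the canonical nonzero maps supplied by the $\Hom$-description. These triangles are the minimal right and left $\add E(\X \setminus \{X\})$-approximations of $E(X)$ by Section \ref{sec:IT}(vii), and the $2$-Calabi--Yau duality supplies the second exchange triangle from the first. For the absence of loops, note that $\End(E(X)) \cong k$ by Section \ref{sec:IT}(vi), so no non-scalar endomorphism exists. For the absence of $2$-cycles, suppose $E(X)$ and $E(Y)$ with distinct $X, Y \in \X$ have nonzero $\Hom$ in both directions; the inequalities of Section \ref{sec:IT}(vi) force the diagonals to be suitably nested, and a direct check shows that in this situation a third diagonal in $\X$ must intervene, factoring one of the maps through Section \ref{sec:IT}(vii) and preventing irreducibility of the arrows in the quiver of $\End(E(\X))$. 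Finally, (d) follows from (a)--(c) together with the standard reading of the mutated quiver from the exchange triangles.

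The main obstacle is the construction and verification of the flip: the combinatorial case analysis when $X$ is incident with a fountain or a leapfrog, and in particular when mutation transforms a fountain into a leapfrog or vice versa. In these degenerate situations one must verify that $X^*$ is genuinely a diagonal (both endpoints lie in $\Z$, not in $L(\Z)$) and that the newly-formed set of diagonals still provides a fountain or a leapfrog at each limit point. A careful enumeration of all possible local pictures around $X$ is needed to handle every configuration uniformly.
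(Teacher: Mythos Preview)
Your direct verification of all four axioms would work, but the paper takes a much shorter route: it invokes \cite[thm.\ II.1.6]{BIRS}, which in a $2$-Calabi--Yau triangulated category reduces the cluster structure axioms to just two conditions --- the existence of at least one cluster tilting subcategory, and the absence of loops and $2$-cycles in the quiver of every cluster tilting subcategory. Existence is immediate from Theorem~\ref{T:cluster tilting}, and the no-loops argument is as you say. So the entire combinatorial construction of the flip $X^*$, the verification that $(\X\setminus\{X\})\cup\{X^*\}$ is again cluster tilting, the exchange triangles, and the FZ-mutation compatibility are all absorbed into the BIRS result and need not be checked by hand. Your approach buys a self-contained picture of mutation in $\cC(\cZ)$, which is of independent interest, but it is substantially more work.

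One further simplification you are missing in the no-$2$-cycles step: for distinct non-crossing $X,Y\in\X$, the paper shows directly that $\Hom(E(X),E(Y))\neq 0$ forces $\Hom(E(Y),E(X))=0$. Indeed, if both were nonzero, the inequalities of Section~\ref{sec:IT}(vi) combined with $X,Y$ non-crossing force (up to relabelling) $x_0=y_0$ and $x_1\neq y_1$, and then the inequality system for $\Hom(E(Y),E(X))\neq 0$ becomes incompatible. Hence the situation you analyse --- nonzero morphisms in both directions, with one factoring through a third diagonal --- never arises, and no factoring argument is needed. Finally, your concern about mutation turning a fountain into a leapfrog is misplaced: fountains and leapfrogs are infinite families of diagonals, so removing a single diagonal $X$ from $\X$ cannot destroy any of them, and the fountain-or-leapfrog condition persists automatically in $(\X\setminus\{X\})\cup\{X^*\}$.
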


\begin{proof}
It is enough to verify the conditions in \cite[thm.\ II.1.6]{BIRS}.

The first condition is that $\cC( \cZ )$ has a cluster tilting subcategory.  This follows from Theorem \ref{T:cluster tilting}.

The second condition is that if $\cT \subseteq \cC( \cZ )$ is a cluster tilting subcategory, then the quiver of $\cT$ has no loops or $2$-cycles.  Recall that up to isomorphism, each indecomposable object of $\cT$ has the form $E(X)$ by Section \ref{sec:IT}(iii).

The space $\Hom_{ \cC( \cZ ) }( E(X),E(X) )$ is $1$-dimensional over the ground field $k$ by Section \ref{sec:IT}(vi), so each non-zero morphism $E(X) \rightarrow E(X)$ is invertible whence the quiver of $\cT$ has no loops.

Let $E( X ) \not\cong E(Y)$ be indecomposable objects in $\cT$ and assume $\Hom_{ \cC( \cZ ) }( E(X),E(Y) ) \neq 0$.  By Section \ref{sec:IT}(vi) we can write $X = \{ x_0,x_1 \}$ and $Y = \{ y_0,y_1 \}$ with
\begin{equation}
\label{equ:x_and_y_inequality}
  x_0 \leqslant y_0 \leqslant x_1^{--} < x_1 \leqslant y_1 \leqslant x_0^{--}.
\end{equation}
If $x_0 \neq y_0$ and $x_1 \neq y_1$ then $X$ and $Y$ would cross, contradicting $\Ext_{ \cC( \cZ ) }^1( E(X),E(Y) ) = 0$ which holds since $E(X),E(Y) \in \cT$.  Without loss of generality we can suppose
\begin{equation}
\label{equ:x_and_y_equality}
  x_0 = y_0
  \;\;\;\;\mbox{and}\;\;\;\;
  x_1 \neq y_1.
\end{equation}  
Suppose we had $\Hom_{ \cC( \cZ ) }( E(Y),E(X) ) \neq 0$.  By Section \ref{sec:IT}(vi) again we would have
\[
  y_0 \leqslant x_0 \leqslant y_1^{--} < y_1 \leqslant x_1 \leqslant y_0^{--}
  \;\;\;\; \mbox{or} \;\;\;\;
  y_0 \leqslant x_1 \leqslant y_1^{--} < y_1 \leqslant x_0 \leqslant y_0^{--}.
\]
But each is incompatible with the combination of \eqref{equ:x_and_y_inequality} and \eqref{equ:x_and_y_equality}, so $\Hom_{ \cC( \cZ ) }( E(Y),E(X) ) = 0$.  Hence there is no $2$-cycle between $E(X)$ and $E(Y)$ in the quiver of $\cT$.
\end{proof}

\begin{Remark}\label{R:clusters}
For most admissible sets $\cZ$, the cluster structure in Theorem \ref{thm:cluster_structure} is different from the one in \cite[Theorem~2.4.1]{IT:cyclicposets}, where the clusters are not necessarily cluster tilting subcategories.

Namely, the convergence condition in \cite[Theorem~2.4.1]{IT:cyclicposets} only asks that for each right (respectively left) fountain at a point $z \in \Z$ converging to a limit point $a \in L(\Z)$, there be a left (respectively right) fountain at $z$ converging to the same limit point $a$ (cf.\ \cite[Definition~2.4.6]{IT:cyclicposets}).

In fact, the clusters in \cite[Theorem~2.4.1]{IT:cyclicposets} coincide with cluster tilting subcategories if and only if $\Z$ is finite or has exactly one limit point. Figure \ref{fig:not_cluster_tilting} yields an example of a cluster in the sense of \cite[Theorem~2.4.1]{IT:cyclicposets} (there is no right or left fountain, so the condition in \cite[Definition~2.4.6]{IT:cyclicposets} is empty) which does not correspond to a cluster tilting subcategory. 
\end{Remark}
\medskip

\medskip
\noindent
{\bf Acknowledgements.}
We thank Charles Paquette, Adam-Christiaan van Roosmalen, and Bin Zhu for illuminating comments on a preliminary version, and the referee for a careful reading and several useful suggestions which have improved the presentation.

This project was supported by grant HO 1880/5-1 under the research priority programme SPP 1388 ``Darstellungstheorie'' of the DFG, and by grant EP/P016014/1 ``Higher Dimensional Homological Algebra'' from the EPSRC.



\begin{thebibliography}{19}

\bibitem{BZ}  T.\ Br\"{u}stle and J.\ Zhang, {\it On the cluster category of a marked surface without punctures}, Algebra and Number Theory {\bf 5} (2011), 529--566.

\bibitem{BIRS} A.\ B.\ Buan, O.\ Iyama, I.\ Reiten, and J.\ Scott, {\it Cluster structures for 2-Calabi--Yau categories and unipotent groups}, Compositio Math.\ {\bf 145} (2009), 1035--1079.

\bibitem{BMRRT}  A.\ B.\ Buan, R.\ J.\ Marsh, M.\ Reineke, I.\ Reiten,
and G.\ Todorov, {\it Tilting theory and cluster combinatorics}, Adv.\
Math.\ {\bf 204} (2006), 572--618. 

\bibitem{CCS}  P.\ Caldero, F.\ Chapoton, and R.\ Schiffler, {\it Quivers with relations arising from clusters ($A_n$ case)}, Trans.\ Amer.\ Math.\ Soc.\ {\bf 358} (2006), 1347--1364.

\bibitem{CZZ}  H.\ Chang, Y.\ Zhou, and B.\ Zhu, {\it Cotorsion pairs in cluster categories of type $A^{ \infty }_{ \infty }$}, J.\ Combin.\ Theory Ser.\ A {\bf 156} (2018), 119--141.

\bibitem{Dickson:torsiontheory}
 	S.\ E.\ Dickson,
 	{\it A torsion theory for abelian categories,}
 	Trans.\ Amer.\ Math.\ Soc.\ {\bf 121} (1966), 223--235.
 
\bibitem{E}  E.\ E.\ Enochs, {\it Injective and flat covers, envelopes
    and resolvents}, Israel J.\ Math.\ {\bf 39} (1981), 189--209. 
 
 \bibitem{Happel:triangcats}
 	 D.\ Happel,
 	``Triangulated categories in the representation theory of finite dimensional algebras'',
 	London Math.\ Soc.\ Lecture Note Ser., Vol. 119, Cambridge University Press, Cambridge, 1988.
 	 
 \bibitem{HJinfinity} 
 	T.\ Holm and P.\ J\o rgensen,
 	{\it On a cluster category of infinite Dynkin type, and the relation to triangulations of the infinity-gon,} 
 	 Math.\ Z.\ {\bf 270} (2012), 277--295. 
 	 
 \bibitem{HJR:torsionpairsA}
 	T.\ Holm, P.\ J\o rgensen, and M.\ Rubey,
 	{\it Ptolemy diagrams and torsion pairs in the cluster category of Dynkin type $A_n$,} 
 	J.\ Algebraic Combin.\ {\bf 34} (2011), 507--523.

 \bibitem{HJR:tubes}
 	T.\ Holm, P.\ J\o rgensen, and M.\ Rubey,
    {\it Torsion pairs in cluster tubes}, 
 	J.\ Algebraic Combin.\ {\bf 39} (2014), 587--605.
 	 
 \bibitem{IT:cyclicposets}
 	K.\ Igusa and G.\ Todorov,
 	{\it Cluster categories coming from cyclic posets,}
 	 Comm.\ Algebra {\bf 43} (2015), 4367--4402.

\bibitem{I}  O.\ Iyama, {\it Maximal orthogonal subcategories of triangulated categories satisfying Serre duality,} in: Oberwolfach Reports, Vol.\ 6, European Mathematical Society, 2005.

 \bibitem{IY:mutation} 
 	O.\ Iyama and Y.\ Yoshino,
 	{\it Mutation in triangulated categories and rigid Cohen-Macaulay modules,}
 	Invent.\ Math.\ {\bf 172} (2008), 117--168.
 	
 \bibitem{KZ:clustertilting}
 	S.\ Koenig and B.\ Zhu,
 	{\it From triangulated categories to abelian categories--cluster tilting in a general framework,}
 	Math.\ Z.\ {\bf 258} (2008), 143--160.

\bibitem{LP}  S.\ Liu and C.\ Paquette, {\it Cluster categories of type $A_{ \infty }^{ \infty }$ and triangulations of the infinite strip}, Math.\ Z.\ {\bf 286} (2017), 197--222.

\bibitem{Ng}  P.\ Ng, A characterization of torsion theories in the cluster category of Dynkin type $A_{ \infty }$, preprint (2010).  {\tt arXiv:1005.4364v1.}

\bibitem{QZ}  Y.\ Qiu and Y.\ Zhou, {\it Cluster categories for marked surfaces: punctured case}, Compositio Math.\ {\bf 153} (2017), 1779--1819.

\bibitem{SvR}  J.\ \v{S}\v{t}ov\'{\i}\v{c}ek and A.-C.\ van Roosmalen, $2$-Calabi--Yau categories with a directed cluster-tilting subcategory, preprint (2016).  {\tt arXiv:1611.03836v1.}
 	
\bibitem{ZZZ}  J.\ Zhang, Y.\ Zhou, and B.\ Zhu, {\it Cotorsion pairs in the cluster category of a marked surface}, J.\ Algebra {\bf 391} (2013), 209--226.

 	
 \end{thebibliography}
\end{document}